\documentclass[11pt,reqno]{amsart}
\usepackage[margin=1.1in]{geometry}
\usepackage{amsmath,amssymb,amsthm,mathtools}
\usepackage{booktabs,array,enumitem,longtable}

\usepackage[expansion=false]{microtype}
\usepackage[colorlinks=true,linkcolor=blue!60!black,citecolor=blue!60!black,urlcolor=blue!60!black]{hyperref}
\usepackage{xcolor}

\setlist[enumerate]{leftmargin=*,itemsep=2pt,topsep=3pt}
\setlist[itemize]{leftmargin=*,itemsep=2pt,topsep=3pt}

\theoremstyle{plain}
\newtheorem{theorem}{Theorem}[section]
\newtheorem{proposition}[theorem]{Proposition}
\newtheorem{lemma}[theorem]{Lemma}
\newtheorem{corollary}[theorem]{Corollary}
\newtheorem*{theoremA}{Theorem A}
\newtheorem*{theoremB}{Theorem B}
\newtheorem*{corollaryC}{Corollary C}
\newtheorem*{theoremD}{Theorem D}
\newtheorem*{maintheorem}{Main Theorem}
\theoremstyle{definition}

\newtheorem{convention}[theorem]{Convention}
\newtheorem{example}[theorem]{Example}
\theoremstyle{remark}
\newtheorem{remark}[theorem]{Remark}

\newcommand{\C}{\mathbb C}
\newcommand{\Q}{\mathbb Q}

\newcommand{\cA}{\mathcal A}
\newcommand{\Ae}{\mathcal A_e}
\newcommand{\cO}{\mathcal O}
\newcommand{\fm}{\mathfrak m}
\newcommand{\fc}{\mathfrak c}
\newcommand{\fp}{\mathfrak p}
\newcommand{\Hilb}{\operatorname{Hilb}}
\newcommand{\Der}{\operatorname{Der}}
\newcommand{\Hom}{\operatorname{Hom}}
\newcommand{\Ann}{\operatorname{Ann}}
\newcommand{\Res}{\operatorname{Res}}
\newcommand{\Spec}{\operatorname{Spec}}

\newcommand{\Supp}{\operatorname{Supp}}

\newcommand{\depth}{\operatorname{depth}}
\newcommand{\length}{\operatorname{length}}
\newcommand{\coker}{\operatorname{coker}}

\newcommand{\ord}{\operatorname{ord}}

\newcommand{\Id}{\mathrm{Id}}
\newcommand{\vol}{\mathrm{vol}}

\newcommand{\Aecodim}{\Ae\text{-}\mathrm{codim}}
\newcommand{\tD}{\widetilde D_X}
\newcommand{\tC}{\widetilde C_X}
\newcommand{\tT}{\widetilde T}
\newcommand{\tK}{\widetilde K}
\newcommand{\Dlog}[1]{\Der(-\log #1)}
\newcommand{\NAe}{N\!\Ae f}
\newcommand{\muI}{\mu_I}
\newcommand{\bL}{\bar\Lambda_3}
\newcommand{\bD}{\bar\Delta}

\numberwithin{equation}{section}

\title[Mond's conjecture for corank-one maps]{Mond's conjecture for corank-one maps \\ from $\mathbb C^3$ to $\mathbb C^4$}
\author{Rich\'ard Rim\'anyi}

\begin{document}

\begin{abstract}
We prove Mond's conjecture for $\mathcal A$-finite corank-one map germs $(\mathbb C^3,0)\to(\mathbb C^4,0)$: the $\mathcal A_e$-codimension is at most the image Milnor number, with equality for quasihomogeneous germs. For a quasihomogeneous germ we determine the derivations of its image modulo the conductor fields, in terms of the double-point surface, its cross-cap curve and its normalisation. This yields a closed formula, depending only on the weights and degrees, for the Hilbert series of the graded $\mathcal A_e$-normal space. Its value at $t=1$ coincides with Ohmoto's formula for the image Milnor number obtained from Segre--Schwartz--MacPherson Thom polynomials. For arbitrary germs we show that generic homogeneous corank-one germs of coprime degrees are $\mathcal A$-finite in every source dimension, and apply the reduction theorem of Fern\'andez de Bobadilla, Nu\~no-Ballesteros and Pe\~nafort Sanchis. In every dimension this reduces Mond's conjecture for corank-one germs to an inequality for generic homogeneous germs.
\end{abstract}

\maketitle

\section{Introduction}

\subsection{Motivation}

Several important results of singularity theory compare two invariants of different nature attached to the same singularity: a topological one which counts oscillations of the germ, and a deformation-theoretic one which measures how the singularity sits among its deformations. The classical example is
\[
\tau(g) \leq \mu(g)
\ \text{\cite{LS85}}
\qquad
\text{(equality if $g$ is quasi-homogeneous \cite{Gre80})}
\]
for  isolated complete intersection singularities $g$. Here $\tau$ is the Tjurina number (the dimension of the base of a miniversal deformation) and $\mu$ is the Milnor number (the number of vanishing cycles). 

Mond's conjecture is the analogue for a map germ $(\C^m,0)\to(\C^{m+1},0)$: the $\Ae$-codimension plays the role of $\tau$, and the image Milnor number that of $\mu$. It was formulated more than thirty years ago \cite{Mon91} and remains open for $m\ge3$. See \cite{NBPS24} for a recent excellent survey on the topic.

In recent work with J.~Koncki \cite{KR}, the author completed a programme initiated by Ohmoto \cite{Ohm16} and continued by Pallar\'es and Pe\~nafort Sanchis \cite{PPS21}. The result computes the image Milnor number of a quasihomogeneous germ from its weights and degrees, using Segre--Schwartz--MacPherson Thom polynomials of multisingularities. It is hence natural to look for an approach to the $\Ae$-codimension side with the same structure. For the time being we have not found one in general, but the attempt led to the present paper.

\subsection{Results}

Let $f:(\C^m,0)\to(\C^{m+1},0)$ be a holomorphic map germ of finite $\Ae$-codimension. In Mather's nice dimensions \cite{Mat71} ($m\leq 14$), and for corank-one germs in every dimension (Remark~\ref{rem:nicedims}), a stable perturbation of $f$ has image homotopy equivalent to a bouquet of $m$-spheres \cite{Mon91}. Their number is the \emph{image Milnor number} $\muI(f)$ (for corank-one germs it can be computed from the multiple-point spaces \cite{GM93}). Mond conjectured that
\[
\Aecodim(f)\le\muI(f),
\]
with equality if $f$ is quasihomogeneous \cite{Mon91,MNB20}.\footnote{Some authors also include the converse, that equality holds only for quasihomogeneous germs, and the conjecture is also formulated for multigerms. This paper is concerned with monogerms in the form stated.} 
The conjecture compares the number of parameters in a versal deformation with the vanishing topology of the image.

For $m=1$ the conjecture is a theorem of Mond \cite{Mon95}, and for $m=2$ it follows from work of de Jong and van Straten \cite{dJvS91} and Mond \cite{Mon91}. For $m\ge3$ it was known only in special classes: germs of $\Ae$-codimension one \cite{CMWA02}, and, for the inequality, augmentations \cite{BROS25}. The weak form ``$\muI(f)=0$ if and only if $f$ is stable'' holds for germs with isolated instability \cite{GCNB}. For weighted homogeneous $f$ in the nice dimensions, Fern\'andez de Bobadilla, Nu\~no-Ballesteros and Pe\~nafort Sanchis proved the reverse inequality $\muI(f)\le\Aecodim(f)$ \cite[\S3 and Cor.~6.3]{FdBNBPS}. Thus, in the quasihomogeneous case, it remains to prove $\Aecodim(f)\le\muI(f)$. They also reduced the general conjecture to quasihomogeneous germs of large multiplicity \cite[Thms~7.1, 7.2]{FdBNBPS}. We prove the following case.

\begin{maintheorem}
Every $\Ae$-finite corank-one germ $f:(\C^3,0)\to(\C^4,0)$ satisfies
\[
\Aecodim(f)\le\muI(f),
\]
with equality if $f$ is $\cA$-equivalent to a quasihomogeneous germ.
\end{maintheorem}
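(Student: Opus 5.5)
The argument splits into the quasihomogeneous case, where we prove equality, and the general case, which we reduce to the quasihomogeneous one.

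\textbf{Quasihomogeneous case.} By \cite[\S3, Cor.~6.3]{FdBNBPS}, $\muI(f)\le\Aecodim(f)$ already holds, so only $\Aecodim(f)\le\muI(f)$ is needed. We aim to prove equality directly by computing both sides from the weights $w_1,w_2,w_3$ and degrees $d_1,\dots,d_4$.

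1. Write $f(x,y,z)=(x,y,p(x,y,z),q(x,y,z))$ in corank-one normal form. Let $h$ be the reduced equation of the image $X$. We use the identification
\[
\NAe\cong \Der(-\log X)\text{-cokernel}\cong \cO_4/J_h'
\]
due to Mond. Here $\Der(-\log X)$ is the module of logarithmic derivations of $X$. The codimension is then the length of $\theta(f)/tf(\theta_3)+\omega f(\theta_4)$, which equals the colength of the image of $\Der(-\log X)$ under evaluation on $h$, corrected by $\cO_{X}$-module data.

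2. Determine $\Der(-\log X)$ modulo the submodule of fields vanishing on the conductor. Concretely, let $\fc\subset\cO_X$ be the conductor of the normalisation $\cO_X\to\cO_3$. Every vector field in $\fc\,\theta_4$ is tangent to $X$. The quotient should be describable in terms of three objects: the double-point surface $D\subset\C^3$, the cross-cap curve $C$ on it, and its normalisation $\tD$. Here we use that $f$ is $\Ae$-finite of corank one, so $D$ is a Cohen--Macaulay surface (determinantal, by Marar--Mond) with isolated singularities away from stable points.

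3. Everything is graded. Compute the Hilbert series of $\NAe$ as an alternating sum of the Hilbert series of the following modules: $\cO_3$, the conductor, $\cO_D$, $\cO_{\tD}$ and $\cO_C$. Each of these has a resolution, of Koszul or Eagon--Northcott type, determined by the weights and degrees. Evaluating at $t=1$ gives $\Aecodim(f)$. Compare this with Ohmoto's SSM--Thom polynomial formula for $\muI$, valid for quasihomogeneous germs \cite{Ohm16,KR}. Both are polynomials in the $w_i,d_j$, so equality is a symbolic identity. Finally, $\cA$-invariance of both sides covers germs that are only $\cA$-equivalent to quasihomogeneous ones.

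\textbf{General case.} Apply \cite[Thms~7.1, 7.2]{FdBNBPS}. These reduce the inequality for an arbitrary germ to the quasihomogeneous case for germs of large multiplicity, provided that generic germs in the relevant homogeneous families are $\cA$-finite. We prove this finiteness for generic homogeneous corank-one germs whose degrees are coprime. Coprimality ensures that the multiple-point spaces $D^k(f)$ have the expected dimension off the origin, which follows from a Bertini-type transversality argument applied to the divided differences. The quasihomogeneous equality above then gives the inequality for every $\Ae$-finite corank-one germ.

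\textbf{Main obstacle.} The hardest step is step 2: the explicit description of $\Der(-\log X)/\fc\,\theta_4$. I would first try to identify it with derivations of $\cO_{\tD}$ preserving the involution-lifted structure. In that picture the cross-cap curve $C$ contributes the correction term, via the standard exact sequence $0\to\cO_X\to\cO_3\to\cO_3/\cO_X\to0$ and the duality $\fc\cong\Hom(\cO_3,\cO_X)$.
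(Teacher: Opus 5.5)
Your architecture matches the paper's: a graded computation of $\Hilb\NAe$ for quasihomogeneous germs, comparison at $t=1$ with Ohmoto's formula, then the fixed-corank reduction fed by generic homogeneous germs of coprime degrees. But the quasihomogeneous half has a genuine gap at step~2. You leave it as ``should be describable'', and the concrete ingredients you do propose are wrong.

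\textbf{The module you quotient by is wrong.} Target fields with coefficients in the conductor need not be tangent to $X$. For the cross-cap, $G=Z^2-X_1^2Y$ and $\fc=(X_1,Z)\cO_X$, yet $Z\partial_{X_1}(G)=-2X_1YZ\notin(G)$. The correct object lives in the source:
\[
L=\{\eta\in\theta_S:\eta(\cO_X)\subseteq\cO_X\}\cong\Der(\cO_X)
\]
by Seidenberg. It contains $\fc\theta_S=\Lambda\theta_S$, and one only needs
\[
\Hilb\NAe=\Hilb\theta(f)-\Hilb\theta_S-(1-t^{\mu k})\Hilb\Theta_T+\Hilb L.
\]
Your identification of $\NAe$ with $\cO_4/J_h'$ in step~1 is neither justified nor needed.

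\textbf{The list of modules in step~3 is wrong.} It contains a term from $\cO_C$, whereas in the actual formula the cross-cap curve contributes no separate term. It also omits the triple-point conductor, which does contribute.

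What is missing is a chain of three structural results.
\begin{enumerate}
\item[(i)] \emph{Rigidity.} Restriction to $D_X$ gives $L/\Lambda\theta_S\cong\Der(\cO_{D_X};C_X)$. This is checked in the local models $A_0^2$, $A_0^3$, $S_0$ off the vertex. It extends across the vertex because $L\cong\Hom_{\cO_X}(\Omega_{\cO_X},\cO_X)$ has depth at least two, hence so does $L/\Lambda\theta_S$.
\item[(ii)] \emph{Freeness.} On the complete intersection $\tD=D^2(f)/S_2\subset\C^4$, the derivations tangent to the branch curve $\tC=V(\bD)$ form the free module $\tK=\cO_{\tD}E\oplus\cO_{\tD}\zeta$. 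Here $\zeta$ is the cross product of the gradients of $\Delta,P_{12},Q_{12}$, and $E\wedge\zeta=\pm v\bD\varepsilon$. This is how tangency to the cross-cap curve is absorbed without any $\cO_C$ correction.
\item[(iii)] \emph{Descent.} The conductor of $\cO_{D_X}\subset\cO_{\tD}$ is $\bL\cO_{\tD}$, with $\bL$ the triple-point function, and $\Der(\cO_{D_X};C_X)=\cO_{D_X}E\oplus\bL\cO_{\tD}\zeta$.
\end{enumerate}
Only these give $\Hilb\NAe=\mathcal H_3+t^{d_0}H_{\tD}$ with $d_0=\deg\Lambda_3+e_\zeta$, whose value at $t=1$ is Ohmoto's formula.

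Your general-case argument is essentially the paper's, with two corrections. First, only \cite[Thm~7.2]{FdBNBPS} applies, since your germs keep the linear components $x_1,x_2$, and Thm~7.1 needs large order in every component. Second, coprimality is not what makes the Bertini argument work. Generic smoothness is applied on $\{x\ne0\}$, where the divided differences can be prescribed freely. Coprimality is used only to exclude multiple points on the punctured axis $\{x=0\}$. After that, a cone and Krull-height argument upgrades smoothness off the vertex to the ICIS condition required by the Marar--Mond criterion.
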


The algebraic part of the proof concerns a quasihomogeneous germ with image $X$ and normalisation $f:S\to X$. Write $\fc$ for the conductor of $\cO_X\subset\cO_S$, and $D_X$ for the double-point surface. We first identify $\fc$ with the reduced double-point ideal in the source. Restricting derivations to $D_X$ then gives
\[
\Der(\cO_X)/\fc\theta_S\cong\Der(\cO_{D_X};C_X),
\]
where the right-hand side consists of derivations tangent to the cross-cap curve $C_X$. We have therefore reduced the calculation on the three-dimensional image to one on the surface $D_X$, with the curve $C_X$ distinguished.

The normalisation $\tD=D^2(f)/S_2$ of that surface is a complete intersection. Its branch curve has a reduced equation $\bD$, and its logarithmic derivations have a basis consisting of the Euler field $E$ and a cross-product field $\zeta$. The identity
\[
E\wedge\zeta=\pm v\bD\,\varepsilon
\]
identifies this basis away from the vertex. The descent condition at a triple point is exactly the triple-point conductor condition on the transverse part. Hence
\[
\Der(\cO_{D_X};C_X)
=\cO_{D_X}E\oplus\bL\cO_{\tD}\zeta.
\]
The local calculations extend across the vertex because the proposed submodules have depth two. Their Hilbert series determine the graded $\Ae$-normal space, and its dimension agrees with Ohmoto's formula for $\muI$ \cite{Ohm16}. No genericity assumption on the quasihomogeneous germ is needed.

To pass from the quasihomogeneous case to arbitrary corank-one germs, we construct quasihomogeneous $\Ae$-finite germs with arbitrarily large orders of their nonlinear components: a generic pair of homogeneous polynomials of coprime degrees has this property. The main point is to prove the expected smoothness of the multiple-point spaces. For this we use interpolation for divided differences and generic smoothness, followed by the Marar--Mond criterion. The fixed-corank reduction theorem of Fern\'andez de Bobadilla, Nu\~no-Ballesteros and Pe\~nafort Sanchis \cite[Thm~7.2]{FdBNBPS} then proves the Main Theorem. In every dimension the same construction reduces the corank-one conjecture to an upper bound for the $\Ae$-codimension of a single explicit family of homogeneous germs (Theorem~D in Section~\ref{sec:setting}).

The method is related to the study of liftable vector fields for stable corank-one maps \cite{HL}, their behaviour under unfoldings \cite{NBOS}, and their relation to quasihomogeneity \cite{BROS}. Our description uses the conductor and the geometry of target multiple-point spaces \cite{MP89,NBPSV}. The quasihomogeneous image Milnor number formulas of Mond and Ohmoto \cite{MonQH91,Ohm16}, and their higher-dimensional extension in \cite{KR}, provide the topological comparison.

Section~\ref{sec:setting} states the main results. After the local models and conductor calculation, Section~\ref{sec:thmA} treats the two-dimensional case as a warm-up. Its proof is not needed for the three-dimensional argument in Sections~\ref{sec:rigidity}--\ref{sec:normalisation}. Section~\ref{sec:proofB} computes the Hilbert series and includes a worked example. Section~\ref{sec:beyond} proves the reduction to arbitrary germs (Theorem~D). Section~\ref{sec:outlook} discusses the case $m=4$ and other open questions. Appendix~\ref{sec:algebra} records the elementary algebra used in the Hilbert-series calculations.

\subsection{Acknowledgments}
The author is grateful to J. Koncki, T. Ohmoto, J. J. Nu\~no-Balles\-teros, and G. Pe\~nafort Sanchis for helpful conversations on the topic, and their encouragements. Computer experiments that led to the statements of Theorems~A and~B, as well as first drafts of several proofs and independent audits of the arguments were produced with substantial help from AI (Anthropic's Claude; OpenAI's ChatGPT). The author takes full responsibility for the content.

\section{Setting and main results}\label{sec:setting}

For $m\in \{2,3\}$ consider a quasihomogeneous corank-one germ
\[
f: (\C^m,0) \to (\C^{m+1},0),
\qquad
f(x,y)=(x,p(x,y),q(x,y)),
\]
where $x=x_1$ or $x=(x_1,x_2)$ depending on $m=2$ or $3$. Here $x_i$ has weight $a_i\ge1$, $y$ has weight $v\ge1$, $p,q\in\C[x,y]$ are weighted homogeneous of degrees $\mu v$ and $k$, with
\[
p(0,y)=c\,y^{\mu}\ \ (c\ne0),\qquad \mu\ge2,\qquad k>v .
\]
($p(0,y)\ne0$ is a normalisation, see Remark~\ref{rem:normalisation}.) Every $\Ae$-finite quasihomogeneous corank-one germ takes this form after weighted-homogeneous changes of coordinates in source and target (Remarks~\ref{rem:normalform}, \ref{rem:normalisation} and~\ref{rem:kv}).

We write $S=\C^m$ (source), $T=\C^{m+1}$ with coordinates $(X_1,\dots,X_{m-1},Y,Z)$ of weights $(a,\mu v,k)$, $\cO_S=\C[x,y]$, $\cO_T=\C[X,Y,Z]$, $f^*:\cO_T\to\cO_S$, $X_i\mapsto x_i$, $Y\mapsto p$, $Z\mapsto q$. All rings are positively graded with degree-$0$ part $\C$. By ``module'' we mean finitely generated graded module unless said otherwise, and define $\Hilb M=\sum_\delta\dim_\C M_\delta\,t^\delta$.

The Euler fields are $\eta_E=\sum a_ix_i\partial_{x_i}+vy\partial_y$ on $S$ and $E=\sum a_iX_i\partial_{X_i}+\mu vY\partial_Y+kZ\partial_Z$ on $T$, and we have $df(\eta_E)=E\circ f$. On any positively graded ring $R$ considered below we also write $E$ for the \emph{Euler derivation} $h\mapsto(\deg h)\,h$ ($h$ homogeneous). On $\cO_T$, $\cO_X$, $\cO_S$, $\cO_{D_X}$, $\cO_{\tD}$ these are the fields just defined, or their unique extensions (Lemma~\ref{lem:seidenberg-graded}).

We use the usual notation of singularity theory:
$\theta_S=\Der_\C\cO_S$, $\Theta_T=\Der_\C\cO_T$, $\theta(f)$ the module of vector fields along $f$, $tf:\theta_S\to\theta(f)$, $\eta\mapsto(\eta(f_1),\dots,\eta(f_{m+1}))$, $\omega f:\Theta_T\to\theta(f)$, $\xi\mapsto(\xi_j\circ f)_j$, and
\[
\NAe=\theta(f)\big/\big(tf(\theta_S)+\omega f(\Theta_T)\big),\qquad \Aecodim(f)=\dim_\C\NAe .
\]
For the grading, put $w=(a_1,\dots,a_{m-1},v)$ and $d=(a_1,\dots,a_{m-1},\mu v,k)$. Our convention is $M(b)_\delta=M_{\delta+b}$, so a coordinate field $\partial_z$ has degree $-\deg z$, and
\[
\theta_S=\bigoplus_{i=1}^{m}\cO_S(w_i),\qquad
\Theta_T=\bigoplus_{j=1}^{m+1}\cO_T(d_j),\qquad
\theta(f)=\bigoplus_{j=1}^{m+1}\cO_S(d_j).
\]
Thus $tf$ and $\omega f$ have degree zero. Unless explicitly stated otherwise, through Section~\ref{sec:proofB} we assume $m\in\{2,3\}$ and that $f$ is quasihomogeneous and $\Ae$-finite. For finite germs, $\cA$-finiteness (finite determinacy) is equivalent to finite $\Ae$-codimension \cite[Thm~6.1]{MNB20}, and we use the two terms interchangeably. Let $X:=f(S)$ be the image of $f$ and $\cO_X:=f^*\cO_T\subset\cO_S$.

Since all weights are positive and $p(0,y)=cy^\mu$, the map $f$ is finite and $f^{-1}(0)=\{0\}$: indeed $f(x,y)=0$ forces $x=0$ and then $p(0,y)=cy^\mu=0$. Consequently ``off the origin'' has the same meaning in the source and in the target, and $\cO_S$ is a finite $\cO_T$-module (\S\ref{sec:image}).

We distinguish source fields from liftable target fields. Set
\[
L:=\{\eta\in\theta_S:tf(\eta)\in\omega f(\Theta_T)\},\qquad
\operatorname{Lift}(f):=\{\xi\in\Theta_T:\omega f(\xi)\in tf(\theta_S)\}.
\]
If $G=0$ is the reduced equation of $X$, then Proposition~\ref{prop:L} identifies
\[
L\cong\Der(\cO_X)\cong\operatorname{Lift}(f)/(G\Theta_T),
\qquad \operatorname{Lift}(f)=\Dlog X.
\]
We use this identification to regard $\Der(\cO_X)$ as a submodule of $\theta_S$.

For compact Hilbert-series statements, write $\lambda=(\mu-1)(k-v)$ and
\begin{gather*}
H_S=\frac{1}{\prod_{i=1}^{m-1}(1-t^{a_i})(1-t^v)},\qquad
H_X=\frac{1-t^{\mu k}}{\prod_{i=1}^{m-1}(1-t^{a_i})(1-t^{\mu v})(1-t^k)},\\
A_S=\sum_{i=1}^{m-1}t^{-a_i}+t^{-v},\qquad
A_T=\sum_{i=1}^{m-1}t^{-a_i}+t^{-\mu v}+t^{-k},\\
\mathcal H_m=(A_T-A_S)H_S+(1-A_T)H_X+t^\lambda(A_S-1)H_S.
\end{gather*}

\begin{theoremA}[$m=2$]
Let $\Lambda\in\cO_S$ be the double-point function of Section~\ref{sec:double}, of degree $\lambda$. Then
\[
\Der_\C(\cO_X)=\cO_X\eta_E+\Lambda\theta_S,\qquad
\cO_X\eta_E\cap\Lambda\theta_S=\Lambda\cO_S\eta_E.
\]
Consequently $\Hilb_t\NAe=\mathcal H_2(t)$.
\end{theoremA}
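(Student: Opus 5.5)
We prove Theorem A in three steps: the inclusion $\supseteq$, the inclusion $\subseteq$, and then the intersection and Hilbert-series bookkeeping. Throughout we use the identification $L\cong\Der(\cO_X)$ of Proposition~\ref{prop:L}, which lets us work with source fields $\eta\in\theta_S$ such that $tf(\eta)\in\omega f(\Theta_T)$.

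\textbf{Step 1: the inclusion $\supseteq$.} The Euler field lies in $L$ because $df(\eta_E)=E\circ f$. Since $L$ is an $\cO_X$-module, $\cO_X\eta_E\subset L$. For $\Lambda\theta_S\subset L$, we use the conductor calculation announced in the introduction: $\fc$ equals the reduced double-point ideal, which for $m=2$ is $(\Lambda)$. Hence $\Lambda\cO_S\subset\cO_X$. Given $\eta\in\theta_S$, each component of $\Lambda\,tf(\eta)$ lies in $\fc\subset\cO_X$. Therefore $\Lambda\,tf(\eta)=\omega f(\xi)$ for some $\xi\in\Theta_T$, obtained by lifting the components through $f^*$. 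So $\Lambda\eta\in L$.

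\textbf{Step 2: the inclusion $\subseteq$.} This is the main obstacle. Let $\delta\in\Der(\cO_X)$ be homogeneous. We restrict $\delta$ to the double curve $D_X$, the image of $V(\Lambda)$. By the analogue of $\Der(\cO_X)/\fc\theta_S\cong\Der(\cO_{D_X};C_X)$, it suffices to show that every derivation of $\cO_{D_X}$ tangent to the cross-cap points is an $\cO_{D_X}$-multiple of the Euler derivation. For $m=2$, $D_X$ is a quasihomogeneous curve and $C_X$ is just the origin.

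We argue in three parts:
\begin{enumerate}
\item Off the origin, $X$ has only transverse double points (and cross-caps on the source side). Locally, $\fc\theta_S$ already accounts for everything except the component tangent to $D_X$.
\item The double curve is a union of weighted-homogeneous branches, each the orbit of the $\C^*$-action. A derivation of $\cO_{D_X}$ is therefore generically proportional to $E$. Since $\cO_{D_X}$ is reduced of dimension one, a derivation $\partial$ of it agrees with $gE$ after inverting a nonzerodivisor.
\item We then show $g\in\cO_{D_X}$. We plan to do this branch by branch on the normalisation $\C[s]$ of each branch, where $E$ acts as $c\,s\,\partial_s$ with $c\neq 0$. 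Derivations of the branch ring preserve the maximal ideal, which forces $g$ to be regular on each branch. The compatibility at the origin should then follow from the triple-point-free structure in dimension $2$.
\end{enumerate}

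If the gluing in (3) proves delicate, we would instead use a direct argument. Write $\delta(Y)$ and $\delta(Z)$ in terms of $f$ and divide by $\Lambda$ using the explicit divided-difference form of $\Lambda$ from Section~\ref{sec:double}. Subtracting $(\deg\delta)^{-1}$-weighted Euler contributions then leaves something divisible by $\Lambda$ componentwise.

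\textbf{Step 3: the intersection and the Hilbert series.} For the intersection, $\Lambda\cO_S\eta_E$ lies in both summands because $\Lambda\cO_S\subset\cO_X$. Conversely, suppose $h\eta_E=\Lambda\zeta$ with $h\in\cO_X$ and $\zeta\in\theta_S$. The components of $\eta_E$ are $a_1x$ and $vy$, and these form a regular sequence. Hence $\zeta=(\Lambda')\eta_E$-type arguments give $\Lambda\mid h$ in $\cO_S$ by a Koszul argument, and so $h\eta_E\in\Lambda\cO_S\eta_E$.

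The Hilbert series then follows from additivity:
\[
\Hilb L=H_X+t^{\lambda}A_SH_S-t^{\lambda}H_S .
\]
Next, $\NAe=\theta(f)/(tf(\theta_S)+\omega f(\Theta_T))$. The map $tf$ is injective, and $tf(\theta_S)\cap\omega f(\Theta_T)=tf(L)$. Moreover $\ker\omega f=G\Theta_T$, and $\omega f(\Theta_T)\cong\Theta_T/G\Theta_T$. This gives
\[
\Hilb\NAe=A_TH_S-A_SH_S-A_T(H_T-t^{\mu k}H_T)+\Hilb L ,
\]
with $H_T(1-t^{\mu k})=H_X$. Collecting terms yields $\mathcal H_2$.
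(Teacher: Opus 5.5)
Your Steps~1 and~3 are sound and match the paper. For the intersection, coprimality of $x,y$ in the UFD $\cO_S$ already gives $\Lambda\mid h$ from $\Lambda\mid hx$ and $\Lambda\mid hy$, and your bookkeeping via (U1) is the paper's.

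\textbf{The gap is in Step~2: the statement you reduce to is false.}

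\emph{Why the reduction fails.} On the smooth punctured curve $D_X\setminus0$, every derivation of $\cO_{D_X}$ has the form $gE$, with $g$ a section over $D_X\setminus0$. Such a $gE$ preserves $\cO_{D_X}$ and $\fm$ exactly when $g\fm\subseteq\fm$. For a one-dimensional graded CM ring that is not regular, $\fm:\fm$ is strictly larger than the ring. So whenever $D_X$ is singular at the vertex, there are $\fm$-preserving derivations outside $\cO_{D_X}E$.

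\emph{A concrete counterexample.} Take Mond's $B_2$, $f=(x,y^2,x^2y+y^5)$ with $(a,v)=(2,1)$. Then $\Lambda=x^2+y^4$ and $\cO_{D_X}=\C[X,Y]/(X^2+Y^2)$, a node with no triple points. The degree-$0$ derivation $Y\partial_X-X\partial_Y$ preserves $\fm$. It equals $-\tfrac i2E$ on the branch $X=iY$ and $\tfrac i2E$ on the branch $X=-iY$. Its coefficient is therefore regular on each branch, as your part~(3) predicts, but it is not in $\cO_{D_X}$, whose degree-$0$ part is $\C$.

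\emph{Consequences for your part~(3).} From $\cO_{D_X}$ alone you can only place $g$ in the normalisation. No ``compatibility at the origin'' argument can close the gap, and triple points play no role here. (For $m=2$, triple points do occur, namely at the origin.) This is the content of Remark~\ref{rem:m2fails}: for $m=2$ the analogue of the rigidity isomorphism fails, and the image of $\Der(\cO_X)$ in $\Der(\cO_{D_X})$ is a proper submodule. The proof must therefore use more about $\delta$ than its restriction to $D_X$.

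\textbf{The missing idea.} You need to use that $\delta$ comes from a source field $\eta=\eta_x\partial_x+\eta_y\partial_y\in\theta_S$ with polynomial components, together with a degree dichotomy.
\begin{itemize}
\item Transversality at $A_0^2$ points and Lemma~\ref{lem:orbit} give $\eta=\tilde g\,\eta_E$ along $D\setminus0$. Here $\tilde g$ is only a section over the punctured curve.
\item If $\deg\eta>\deg\Lambda-a-2v=a(\cO_{D_X})$, then by~\ref{D5} such sections already lie in $\cO_{D_X}$. So $\tilde g$ lifts to some $h\in\cO_X$, and $\eta-h\eta_E$ vanishes on $D$, hence lies in $\Lambda\theta_S$.
\item If $\deg\eta<\deg\Lambda-a-v$, then $\psi=vy\,\eta_x-ax\,\eta_y$ vanishes on $D$, so $\Lambda\mid\psi$, while $\deg\psi<\deg\Lambda$. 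Thus $\psi=0$ and $\eta=g'\eta_E$ with $g'\in\cO_S$. Then $g'\fm_X\subseteq\cO_X$ together with $H^0_\fm(\cO_S/\cO_X)=0$ gives $g'\in\cO_X$.
\item Since $v\ge1$, these two ranges cover every degree.
\end{itemize}
In the $B_2$ example, degree $0$ falls in the second range, which is exactly why $Y\partial_X-X\partial_Y$ does not come from $\Der(\cO_X)$.

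Your fallback ``direct argument'' contains neither ingredient. It never explains how a coefficient defined only along $D\setminus0$ becomes an element of $\cO_X$, and that is the whole difficulty.
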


\begin{theoremB}[$m=3$]
Let $\fc=\Lambda\cO_S$ be the conductor of $\cO_X\subset\cO_S$, $D_X=\Spec(\cO_X/\fc)$ the double-point surface, $C_X\subset D_X$ the reduced cross-cap curve, and $\tD=D^2(f)/S_2$ its normalisation. Let $\bL$ be the triple-point function on $\tD$ of Section~\ref{sec:triple}, of degree $(\mu-2)(k-2v)$, and $\zeta$ the cross-product field of Section~\ref{sec:freeness}, of degree $e_\zeta=k+(\mu-3)v-a_1-a_2$. Then
\[
\Der_\C(\cO_X)/\fc\theta_S
\cong\Der(\cO_{D_X};C_X)
=\cO_{D_X}E\oplus\bL\cO_{\tD}\zeta.
\]
Here $\bL$ is a unit when the triple curve is empty. Consequently, with $d_0=\lambda-a_1-a_2$, one has
\[
\Hilb_t\NAe=\mathcal H_3(t)+t^{d_0}H_{\tD}(t),\qquad
H_{\tD}=\frac{(1-t^{(\mu-1)v})(1-t^{k-v})}
{(1-t^{a_1})(1-t^{a_2})(1-t^v)(1-t^{2v})}.
\]
\end{theoremB}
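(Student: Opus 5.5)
Theorem B splits into an algebraic identification of $\Der_\C(\cO_X)$ modulo the conductor fields and a Hilbert-series computation. I would carry these out in the following order.

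\emph{Step 1: the conductor and $L$.} Using Proposition~\ref{prop:L}, I regard $\Der(\cO_X)$ as the submodule $L\subset\theta_S$. I first show $\fc=\Lambda\cO_S$. Off the origin, the local models (cross-cap, transverse double and triple points) give a conductor equal to the reduced double-point ideal. Both ideals are reflexive in the three-dimensional ring $\cO_S$, so they agree everywhere. Since $\fc$ is an $\cO_X$-ideal, every $\eta\in\fc\theta_S$ preserves $\cO_X$: for $h\in\cO_X$ one has $\eta(h)\in\fc\subset\cO_X$. So $\fc\theta_S\subset L$. I then define the restriction map $L\to\Der(\cO_X/\fc)=\Der(\cO_{D_X})$. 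It has kernel $\fc\theta_S$, and its image lies in the fields tangent to $C_X$. Surjectivity onto $\Der(\cO_{D_X};C_X)$ is checked in the local models off $0$. It then extends across the vertex by a depth argument: the cokernel is supported at $0$, and $L/\fc\theta_S$ and the target both have depth $\ge2$, being torsion-free reflexive-type modules over a surface.

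\emph{Step 2: the structure of $\Der(\cO_{D_X};C_X)$.} I lift derivations to the normalisation $\tD$ by Seidenberg's theorem (Lemma~\ref{lem:seidenberg-graded}). A derivation of $\cO_{D_X}$ lifts to one of $\cO_{\tD}$ that is logarithmic along the branch curve $\bD=0$. Since $\tD$ is a complete intersection, I would show $\Dlog{\bD}$ is free on $E$ and $\zeta$ via Saito's criterion, namely the identity $E\wedge\zeta=\pm v\bD\,\varepsilon$. The remaining question is which lifts $gE+h\zeta$ descend to $\cO_{D_X}$. The Euler part always descends, since $E$ already lives on $\cO_{D_X}$. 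Away from triple points, $\tD\to D_X$ is an isomorphism off the branch set, and at cross-caps descent is automatic. At a triple point, three sheets of $\tD$ meet, and descent forces the transverse coefficient to lie in the triple-point conductor, which is $\bL\cO_{\tD}$. This gives $\cO_{D_X}E\oplus\bL\cO_{\tD}\zeta$ off the vertex. The sum is direct because $E\wedge\zeta\neq0$ generically. Equality at the vertex again follows from both sides having depth two.

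\emph{Step 3: Hilbert series.} I use the exact sequence $0\to L\to\theta_S\oplus\Theta_T\to\ldots$, or more directly the formula $\Hilb\NAe=\Hilb\theta(f)-\Hilb tf(\theta_S)-\Hilb\omega f(\Theta_T)+\Hilb(tf\theta_S\cap\omega f\Theta_T)$. Here $tf$ is injective, $\ker\omega f=G\Theta_T$, and the intersection is $tf(L)$. The term $\Hilb L$ decomposes as
\[
\Hilb \fc\theta_S+\Hilb\cO_{D_X}(\deg E)+t^{\deg\bL+e_\zeta}H_{\tD}.
\]
The pieces are computed as follows:
\begin{itemize}
\item $\fc\theta_S$ contributes $t^\lambda A_S H_S$.
\item $\cO_{D_X}=\cO_X/\fc$ contributes $H_X-t^\lambda H_S$.
\item $H_{\tD}$ is the series of the complete intersection of degrees $(\mu-1)v$ and $k-v$ in weights $a_1,a_2,v,2v$.
\end{itemize}
Combining these with $\Hilb\cO_X=H_X$, the terms should assemble into $\mathcal H_3$. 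The shift should come out as $\deg\bL+e_\zeta=(\mu-2)(k-2v)+k+(\mu-3)v-a_1-a_2=\lambda-a_1-a_2$, which matches $d_0$. This is a routine check using the appendix algebra.

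The main obstacle will be Step 2, and specifically the descent computation at the triple points together with the proof that $E,\zeta$ is a basis of $\Dlog{\bD}$ on the singular surface $\tD$. I would first check Saito-type freeness on $\tD$ by hand in local coordinates near each stratum. Then I would control the vertex by showing that $\bL\cO_{\tD}\zeta$ is maximal Cohen--Macaulay, which holds because $\tD$ is Cohen--Macaulay and $\bL$ is a nonzerodivisor.
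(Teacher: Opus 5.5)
Your outline follows the paper's proof in essentially the same order. The steps are: the restriction map to $\Der(\cO_{D_X};C_X)$ with kernel $\fc\theta_S$; local surjectivity off the vertex plus a depth-two extension; the Seidenberg lift to $\tD$; freeness of $\tK$ on $E,\zeta$ from $E\wedge\zeta=\pm v\bD\varepsilon$; descent through the conductor $\bL\cO_{\tD}$ of $\cO_{D_X}\subset\cO_{\tD}$; and (U1) for the Hilbert series. Your bookkeeping in Step~3, including $\deg\bL+e_\zeta=d_0$, is correct.

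\textbf{The gap: depth of $L/\fc\theta_S$.} Your stated reason for $\depth(L/\fc\theta_S)\ge2$ in Step~1 does not work, and surjectivity at the vertex rests on exactly this.
\begin{itemize}
\item Torsion-freeness over the surface $D_X$ gives only depth $\ge1$; for instance the maximal ideal of $\cO_{\tD}$ is torsion-free of depth one.
\item $\cO_{D_X}$ is in general not normal.
\item You have not shown $L/\fc\theta_S$ is reflexive, so ``reflexive-type'' proves nothing.
\end{itemize}
The depth has to come from the three-dimensional ring. $L\cong\Der(\cO_X)=\Hom_{\cO_X}(\Omega_{\cO_X},\cO_X)$ has depth $\ge2$ by \ref{D3}, because $\cO_X$ is Cohen--Macaulay of dimension three. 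Also $\fc\theta_S\cong\theta_S(-\lambda)$ has depth three. The depth lemma then gives $\depth(L/\fc\theta_S)\ge\min(\depth L,\,3-1)\ge2$.

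This is not a formality, as the case $m=2$ shows (Remark~\ref{rem:m2fails}). There the restriction map is again surjective off the vertex and its image is torsion-free over $D_X$. But the same count gives only depth one, and the map is not surjective. For the target you need only depth $\ge1$, which is automatic from $\Der(\cO_{D_X};C_X)\subseteq\cO_{D_X}^4$.

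\textbf{Smaller points to tighten.}
\begin{itemize}
\item \emph{Kernel of the restriction map.} That it is exactly $\fc\theta_S$ needs an argument. From $\eta(f_j)\in\Lambda\cO_S$ you get $\Lambda\mid\eta_{x_i}$, and then $\Lambda\mid\eta_yp_y$ and $\Lambda\mid\eta_yq_y$. Conclude using that $\Lambda$ is square-free and that no component of $V(\Lambda)$ lies in the curve $\Sigma=V(p_y,q_y)$.
\item \emph{Lifting to $\tD$.} Only derivations preserving $I_{C_X}$ lift to ones preserving $(\bD)$; at a cross-cap, $\partial_Y$ does not. The equivalence of Lemma~\ref{lem:Der-DX-in-K} must be argued with reduced ideals, because the pullback of $I_{C_X}$ can be strictly smaller than $(\bD)$ (Example~\ref{ex:embedded}).
\item \emph{Freeness of $\tK$ at the vertex.} Saito's criterion applies only on the smooth punctured surface $\tD\setminus0$. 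At the vertex you need the same depth argument you use for descent: $\cO_{\tD}E\oplus\cO_{\tD}\zeta$ has depth two, and $\tK\subseteq\cO_{\tD}^4$ has depth $\ge1$.
\item \emph{Final descent comparison.} Either prove $\cO_{D_X}E\oplus\bL\cO_{\tD}\zeta\subseteq\Der(\cO_{D_X};C_X)$ globally, or actually prove the depth-two claim you assert for $\Der(\cO_{D_X};C_X)$. The first route needs $\bL\cO_{\tD}$ to be the global conductor (Lemma~\ref{lem:conductorD}, via $H^0_\fm(\cO_{\tD}/\cO_{D_X})=0$).
\item \emph{Triple points.} At a triple point the three sheets of $\tD$ are disjoint. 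It is their images in $D_X$ that meet along the triple curve.
\end{itemize}
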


The notation $\Der(\cO_{D_X};C_X)$ means derivations preserving the reduced ideal of $C_X$. Theorem~B describes the quotient by conductor fields. It does not assert that $\Der(\cO_X)$ itself splits. The expanded finite-sum forms of both Hilbert series are given in Section~\ref{sec:expanded}.

\begin{corollaryC}
For $m\in\{2,3\}$, the graded $\Ae$-normal space has Hilbert series depending only on the weights and degrees, and
\[
\Aecodim(f)=\muI(f)=R(w,d),
\]
where $R(w,d)$ is displayed in Section~\ref{sec:proofC}. The formula for $\muI$ is due to Mond for $m=2$ \cite{MonQH91} and Ohmoto for $m=3$ \cite[Thm~6.20, Ex.~6.22]{Ohm16}. See also \cite{PPS21}, \cite[Thms~2.4--2.5]{NBPS24} and \cite[Thm~9.4]{KR}.
\end{corollaryC}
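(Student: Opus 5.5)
The plan is to deduce Corollary~C directly from Theorems~A and~B. No new algebra is needed: one evaluates the Hilbert series at $t=1$ and compares with the known closed formulas for $\muI$. First, dependence only on weights and degrees is immediate. The right-hand sides $\mathcal H_2$ and $\mathcal H_3+t^{d_0}H_{\tD}$ are built from $H_S$, $H_X$, $H_{\tD}$, $A_S$, $A_T$, $\lambda$ and $d_0$, and each of these is an explicit expression in $w=(a,v)$ and $d=(a,\mu v,k)$. Note that $\mu=d_m/w_m$, so $\mu$ is itself determined by the weights and degrees.

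Second, $\NAe$ is a finite-dimensional graded vector space, since $f$ is $\Ae$-finite. Hence its Hilbert series is a Laurent polynomial, and $\Aecodim(f)=\Hilb_t\NAe|_{t=1}$. The rational expressions have poles at $t=1$, so I will not substitute naively. Instead I will use the expanded finite-sum forms promised in Section~\ref{sec:expanded}, or equivalently rewrite each term as a polynomial. For this I use $H_X/H_S=(1-t^{\mu k})(1-t^v)/\bigl((1-t^{\mu v})(1-t^k)\bigr)$ and group the terms so that each bracket is divisible by $\prod(1-t^{a_i})(1-t^v)$. Concretely, $(1-A_T)H_X+(A_T-A_S)H_S+t^\lambda(A_S-1)H_S$ is reorganised as $H_S$ times a combination that vanishes to order $m$ at $t=1$. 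The value at $t=1$ is then obtained by l'H\^opital, or more cleanly by substituting $t=e^{s}$ and taking the constant term of the Laurent expansion in $s$.

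This produces a polynomial $R(w,d)$ in the $a_i,v,\mu,k$. For $m=3$ there is an extra contribution $d_0$-shifted $H_{\tD}$ evaluated at $1$, which equals $(\mu-1)(k-v)v^{-1}(2v)^{-1}(a_1a_2)^{-1}\cdot v(\mu-1)(k-v)$-type terms. More precisely, $H_{\tD}(1)=\frac{(\mu-1)v(k-v)}{a_1a_2\cdot v\cdot 2v}$, since $\tD$ is a finite-length-free graded complete intersection of dimension two and the $t=1$ value of the full series is finite only after combination with $\mathcal H_3$. So I must take the expansion jointly, not term by term.

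Third, I compare $R(w,d)$ with the cited formulas: Mond's formula \cite{MonQH91} for $m=2$, and Ohmoto's \cite[Thm~6.20, Ex.~6.22]{Ohm16} for $m=3$, as restated in \cite[Thms~2.4--2.5]{NBPS24} and \cite[Thm~9.4]{KR}. Both are rational functions of the weights and degrees, so the comparison is an identity of rational functions in $a_1,a_2,v,\mu,k$. I will verify it symbolically after clearing the denominator $a_1a_2v\cdot\mu v\cdot k$ or similar, which reduces it to a polynomial identity. One must also check that Ohmoto's formula applies to the normal form $f=(x,p,q)$, whose weight vector is $w$ and degree vector $d$. It does, because $\muI$ is an $\cA$-invariant and the normal form is reached by weighted-homogeneous changes of coordinates.

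The main obstacle is the $t=1$ evaluation for $m=3$. The individual summands have poles of order up to three, and the cancellation between $\mathcal H_3$ and $t^{d_0}H_{\tD}$ must be tracked carefully. I will first do this via the expansion in $s=\log t$ up to order $s^0$ and organise the coefficients by powers of $\mu$ and $k$. Then I will match them coefficient by coefficient against Ohmoto's expression, using a sanity check on a known example such as the worked example of Section~\ref{sec:proofB} to fix signs and normalisations.
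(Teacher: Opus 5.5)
Your plan is correct and is essentially the paper's proof. Dependence on $(w,d)$ is read off from Theorems~A and~B. The Laurent polynomial $\Hilb_t\NAe$ is evaluated at $t=1$ by putting $t=e^\epsilon$; the paper organises this as the moments $\sum_\alpha c_\alpha\alpha^j$ of the expanded numerators $B_\mu$, $B^{(3)}_\mu$ of Section~\ref{sec:expanded}, computed with power sums in Appendix~\ref{sec:algebra}. The resulting \eqref{eq:R2}, \eqref{eq:R3} are then identified with Mond's and Ohmoto's formulas.

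One slip to correct: $H_{\tD}$ has a double pole at $t=1$, so $\frac{(\mu-1)(k-v)}{2a_1a_2v}$ is the coefficient of $\epsilon^{-2}$, not a value. Likewise, for $m=3$ it is only the total $\mathcal H_3+t^{d_0}H_{\tD}$, not $\mathcal H_3$ alone, that is $H_S$ times a function vanishing to order $3$ at $t=1$. That is exactly why, as you conclude, the expansion must be done jointly.
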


By \cite[Thm~6.7]{FdBNBPS}, Corollary~C also shows that these germs have the Cohen--Macaulay property of \cite{FdBNBPS} (Corollary~\ref{cor:CM}).

The passage to arbitrary germs rests on the following result, proved in Section~\ref{sec:beyond} (Proposition~\ref{prop:unbounded} and Corollary~\ref{cor:reduction}).

\begin{theoremD}
Let $m\ge2$, and let $\mu,k\ge2$ be coprime integers. For a generic pair of homogeneous polynomials $p,q\in\C[x_1,\dots,x_{m-1},y]$ of degrees $\mu$ and $k$, the germ $(x_1,\dots,x_{m-1},p,q)\colon(\C^m,0)\to(\C^{m+1},0)$ is $\Ae$-finite. Moreover, if, for infinitely many $M$, some $\Ae$-finite germ $h_M$ of this form with $(\mu,k)=(M,M+1)$ satisfies $\Aecodim(h_M)\le\muI(h_M)$, then Mond's conjecture holds for all $\Ae$-finite corank-one germs $(\C^m,0)\to(\C^{m+1},0)$.
\end{theoremD}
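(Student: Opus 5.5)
The plan has two parts. First I prove $\Ae$-finiteness of the generic germ through the Marar--Mond criterion for corank-one germs. Then I feed the resulting family into the fixed-corank reduction theorem \cite[Thm~7.2]{FdBNBPS}.

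\textbf{Setup.} Write $f=(x,p,q)$ with $p,q$ homogeneous of degrees $\mu,k$, and work with the standard weights $a_i=v=1$. Since $p,q$ are generic, $p(0,y)=cy^\mu$ with $c\neq 0$, so $f$ is finite with $f^{-1}(0)=\{0\}$.

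\textbf{Multiple-point spaces.} For $2\le r\le m+1$, the space $D^r(f)\subset\C^{m-1}\times\C^r$ is cut out by the divided differences
\[
[y_1,\dots,y_j]\,p=0,\qquad [y_1,\dots,y_j]\,q=0\qquad (2\le j\le r),
\]
taken in the $y$-variables with $x$ fixed. Repeated nodes are handled by Hermite (confluent) divided differences. This gives $2(r-1)$ equations in $m-1+r$ variables, so the expected dimension is $m+1-r$. The same description, with the nodes constrained by the coincidences prescribed by a partition $P$, gives the spaces $D^r(f,P)$.

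By Marar--Mond, $f$ is $\cA$-finite if and only if each $D^r(f)$ and each $D^r(f,P)$ is empty or an ICIS of the expected dimension. Because $p,q$ are homogeneous, all these spaces are $\C^*$-invariant cones. For a cone, being an ICIS of the expected dimension is equivalent to being smooth of the expected dimension off the origin (or being empty there). It therefore suffices to show smoothness off the origin for generic $(p,q)$.

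\textbf{Generic smoothness via interpolation.} Consider the incidence variety $I=\{(p,q,x,\underline y)\}$, with $(x,\underline y)\ne 0$, on which the divided-difference equations hold. The key claim is that at each such point the linear evaluation map
\[
(p,q)\longmapsto\bigl([y_1..y_j]p,\;[y_1..y_j]q\bigr)_{2\le j\le r}
\]
is surjective onto $\C^{2(r-1)}$. Granted this, $I$ is smooth of codimension $2(r-1)$, and generic smoothness (Sard/Bertini) applied to the projection $I\to\{(p,q)\}$ gives that the fibre over generic $(p,q)$ is smooth of the expected dimension.

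The surjectivity is an interpolation statement for homogeneous polynomials:
\begin{itemize}
\item If $x\neq0$, dehomogenising reduces it to Hermite interpolation in one variable. One prescribes a polynomial of degree $\le\mu$ (respectively $\le k$) by its values and derivatives at the given nodes. This is possible as long as the number of conditions does not exceed the degree plus one. When it does exceed that, the higher divided differences vanish identically, and the corresponding multiple-point space is forced to be empty; I will check separately that this is the expected outcome.
\item If $x=0$, the polynomials restrict to $p=cy^\mu$, $q=c'y^k$ on that line. Here the freedom in the coefficients is much smaller. Instead one checks directly that, off the origin, $f$ has no non-stable multiple points along $x=0$.
\end{itemize}

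\textbf{Where coprimality enters.} The case $x=0$ is the place coprimality is used. A double point $y\neq y'$ with $y^\mu=y'^\mu$ and $y^k=y'^k$ would force $y'/y$ to be a root of unity of order dividing $\gcd(\mu,k)=1$, hence $y=y'$. So $y\mapsto(y^\mu,y^k)$ is injective and immersive on $\C^*$. Combining this with the transverse $x$-directions, whose first-order terms are generic, yields stability, and hence the required smoothness, near such points.

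\textbf{Main obstacle.} I expect the hardest step to be the surjectivity of the interpolation map at degenerate configurations: coincident nodes, which produce the partition spaces $D^r(f,P)$, and the boundary stratum $x=0$. My first attempt is to treat both uniformly via confluent divided differences, stratifying by the coincidence pattern and by whether $x=0$.

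\textbf{Reduction to generic homogeneous germs.} For the second assertion, take $(\mu,k)=(M,M+1)$, which are coprime. The first part provides $\Ae$-finite quasihomogeneous corank-one germs $h_M$ whose nonlinear components $p,q$ have orders $M$ and $M+1$, so these orders are unbounded as $M\to\infty$. The fixed-corank reduction \cite[Thm~7.2]{FdBNBPS} says that $\Aecodim\le\muI$ for such a family of germs of arbitrarily large multiplicity in corank one implies Mond's inequality for every $\Ae$-finite corank-one germ $(\C^m,0)\to(\C^{m+1},0)$. It remains only to check that the hypotheses of that theorem match. In particular it needs corank one, $\Ae$-finiteness, and ``infinitely many $M$'' in place of ``all $M$''; the latter suffices because the theorem only requires unbounded multiplicity.
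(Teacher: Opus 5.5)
Your proof of $\Ae$-finiteness follows the paper's argument (Proposition~\ref{prop:unbounded}) closely. Its ingredients are the same:
\begin{itemize}
\item emptiness of $D^j(f)$ for $j>\min(\mu,k)$;
\item surjectivity of the divided-difference evaluation map at points with $x\neq0$;
\item generic smoothness of the incidence variety;
\item coprimality on the axis $x=0$;
\item a Krull/cone argument at the vertex.
\end{itemize}

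A few points should be tightened, though none is a real gap.
\begin{itemize}
\item Emptiness for $j\ge\min(\mu,k)+1$ holds because the top divided difference equals the leading coefficient $c$ or $c'$, which is nonzero. It does not come from divided differences vanishing identically.
\item On the axis you need neither stability nor "generic first-order terms". All points of a multiple-point configuration of $(x,p,q)$ share the same $x$. So injectivity and immersivity of $y\mapsto(y^\mu,y^k)$ off $0$ already give $D^j(f)\cap\{x=0\}=\{0\}$. The whole punctured cone then lies in $\{x\neq0\}$, where the incidence argument applies.
\item Coincident nodes are not an obstacle. The confluent Newton map $\C[y]_{\le j-1}\to\C^j$ is an isomorphism for arbitrary, possibly repeated, nodes.
\item The partition spaces $D^j(f,P)$ are unnecessary, since the criterion (MM$'$) involves only the $D^j(f)$.
\end{itemize}

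The genuine gap is in the reduction step. You leave the check that "the hypotheses match" undone, and two of them do not match as stated.

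First, \cite[Thm~7.2]{FdBNBPS} requires that Mond's conjecture hold for $h_M$. Since $h_M$ is weighted homogeneous, this means the equality $\Aecodim(h_M)=\muI(h_M)$, whereas your hypothesis only gives $\le$. You are also misstating that theorem as taking the inequality as input. The missing ingredient is the reverse inequality $\muI(h)\le\Aecodim(h)$ for weighted homogeneous germs \cite[\S3 and Cor.~6.3]{FdBNBPS}, which upgrades $\le$ to equality.

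Second, both that inequality and Thm~7.2 are proved only in Mather's nice dimensions, i.e.\ for $m\le14$. Theorem~D, however, is asserted for every $m\ge2$. You must therefore explain why the nice-dimension hypothesis can be dropped in corank one. In \cite{FdBNBPS} it is used only to guarantee two things: that $\Ae$-finite germs admit stable perturbations (which is also what makes $\muI$ defined), and that the stable singularities occurring in them are quasihomogeneous. Both hold for corank-one germs in every dimension, because stable corank-one multigerms are Morin multigerms in general position (Remark~\ref{rem:nicedims}).

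Without the first ingredient your argument does not prove the second assertion of Theorem~D at all. With the first ingredient but not the second, it proves it only for $m\le14$.
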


For $m=3$ the hypothesis holds for every $M$ by Corollary~C, and this proves the Main Theorem. For each $m\le14$, $\muI(h_M)$ is an explicit function of $M$ by \cite[Thm~9.4]{KR}, so in every pair of nice dimensions the corank-one conjecture is reduced to an explicit upper bound for the $\Ae$-codimension of one family of homogeneous germs.

\begin{remark}[no genericity]\label{rem:nogenericity}
Theorems~A and~B require no genericity of the coefficients of $p,q$: only $\Ae$-finiteness (through stability off the origin) and quasihomogeneity are used. Hence the graded $\Ae$-codimension is constant on the $\Ae$-finite locus of the family with given $(w,d)$.
\end{remark}

\begin{remark}[the normalisation $p(0,y)\ne0$]\label{rem:normalisation}
For a finite corank-one germ $(x,p,q)$, $p(0,y)$ and $q(0,y)$ are not both zero. If $p(0,y)=0$, then $q(0,y)=c'y^{k/v}$, and exchanging the roles of $p$ and $q$ puts the germ in the normalised form with $\mu'=k/v$, $k'=\mu v$. Theorems~A, B and Corollary~C apply to the exchanged germ. Since $\NAe$ and $\muI$ are $\cA$-invariants, Corollary~C covers all $\Ae$-finite quasihomogeneous corank-one germs with $m\in\{2,3\}$ (the graded law then has the exchanged parameters).
\end{remark}

\begin{remark}[the hypothesis $k>v$]\label{rem:kv}
If $k<v$ then $q=q(x)$ does not involve $y$ (no monomial of weight $k<v$ contains $y$), so $f$ is $\cA$-equivalent to $(x,p,0)$, whose non-immersive locus $V(p_y)$ has dimension $m-1\ge1$. At a non-immersive point off the origin the germ is not stable (the only stable non-immersive corank-one germs are cross-caps, whose image is not contained in a smooth hypersurface), contradicting $\Ae$-finiteness by (MG) below. If $k=v$ then $q=\alpha y+q'(x)$. For $\alpha=0$ the same argument applies, and for $\alpha\ne0$ the germ $f$ is an immersion, hence stable, $\NAe=0$, and there is nothing to prove. Thus $k>v$ is the only case of interest.
\end{remark}

\begin{remark}[normal form]\label{rem:normalform}
Let $g\colon(\C^m,0)\to(\C^{m+1},0)$ be quasihomogeneous of corank one, i.e.\ weighted homogeneous with respect to positive weights $w'_1,\dots,w'_m$ of the source coordinates $z$ and degrees $d'_1,\dots,d'_{m+1}$ of its components. Then $dg(0)$ preserves the induced gradings of the tangent spaces, so after graded linear changes of coordinates in source and target we may assume that $g_i=z_i+h_i$ for $i\le m-1$, with $h_i\in\fm^2$ weighted homogeneous of degree $d'_i=w'_i$, and that $g_m,g_{m+1}\in\fm^2$. The substitution $x_i:=g_i$ ($i\le m-1$), $y:=z_m$ is a weighted-homogeneous change of coordinates with linear part the identity, and in these coordinates $g=(x,p,q)$ with $p,q$ weighted homogeneous. If $g$ is $\Ae$-finite, it is finite, so $p(0,y)$ and $q(0,y)$ are not both zero. After the exchange of Remark~\ref{rem:normalisation} if necessary we may assume $p(0,y)=cy^\mu$ with $c\ne0$, and by Remark~\ref{rem:kv} we may assume $k>v$. This is the form considered above.
\end{remark}

\begin{remark}[symmetry in $p$ and $q$]\label{rem:symmetry}
The Hilbert series of Theorems~A and~B depend on $\deg p=\mu v$ and $\deg q=k$ only through $t^{\mu v}$, $t^{k}$, $t^{\mu k}=t^{\mu v\cdot k/v}$, $t^{(\mu-1)v}=t^{\mu v-v}$, $t^{k-v}$ and $t^{\lambda}=t^{(\mu v-v)(k-v)/v}$, so they are symmetric under exchanging $\deg p$ and $\deg q$. This is consistent with Remark~\ref{rem:normalisation}: when both $p(0,y)$ and $q(0,y)$ are nonzero, either of them may be used for the normalisation, and the two applications of Theorem~A or~B, which involve different double-point and triple-point functions, give the same answer. At $t=1$ it is the symmetry of \eqref{eq:R2} and \eqref{eq:R3} under $(\mu,k)\mapsto(k/v,\mu v)$.
\end{remark}

\section{Preliminaries and external inputs}\label{sec:prelim}

\subsection{Graded modules, analytic germs, and the comparison lemma}\label{sec:comparison}

All our rings are finitely generated positively graded $\C$-algebras $R=\bigoplus_{n\ge0}R_n$ with $R_0=\C$. We define the irrelevant ideal $\fm=\bigoplus_{n>0}R_n$, and $U:=\Spec R\setminus\{\fm\}$ the \emph{punctured cone}. The $\C^*$-action on $\Spec R$ has $\{\fm\}$ as its unique fixed point, and $U$ is a union of orbits. Depth is always taken with respect to $\fm$, equivalently in the localisation $R_\fm$.

\subsubsection*{Analytic versus algebraic $\NAe$.}
The germ $f$ is usually treated with analytic local rings $\cO^{an}_S=\C\{x,y\}$, $\cO^{an}_T=\C\{X,Y,Z\}$. Since $p,q$ are polynomials, $tf$ and $\omega f$ are defined over the polynomial rings, and $N^{an}=\NAe\otimes_{\cO_T}\cO^{an}_T$. All modules are $\cO_T$-modules via $f^*$, and they are finitely generated since $f$ is finite. Base change to the flat ring $\cO^{an}_T$ commutes with cokernels. We also use $\cO_S\otimes_{\cO_T}\cO^{an}_T\cong\cO^{an}_S$, which holds because $f$ is finite with $f^{-1}(0)=\{0\}$. If $N^{an}$ is finite-dimensional (i.e.\ $f$ is $\Ae$-finite as an analytic germ) then $\NAe$ is supported at $\fm$: its support is a $\C^*$-invariant closed subset of $\Spec\cO_T$, hence a union of orbit closures all passing through the origin, and if it contained a point $\ne0$ then $N^{an}$ would be nonzero at points arbitrarily close to $0$, contradicting $\dim N^{an}<\infty$. A module supported at $\fm$ is unchanged by completion, so $\NAe=N^{an}$, and the graded polynomial module computes the $\Ae$-codimension and its grading. Conversely finite-dimensionality of the graded module implies that of $N^{an}$. We therefore work with graded polynomial modules throughout.

\begin{lemma}[comparison lemma]\label{lem:comparison}
Let $R$ be as above and $M\subseteq N$ finitely generated $R$-modules  (\emph{not} necessarily graded). Suppose that for every closed point $P\in U$ the analytic local ring $\cO_P^{an}$ of $\Spec R$ at $P$ satisfies $M\otimes_R\cO^{an}_P=N\otimes_R\cO^{an}_P$ as submodules of $N\otimes_R\cO^{an}_P$. Then $N/M$ is supported at $\{\fm\}$, i.e.\ $\widetilde M|_U=\widetilde N|_U$, and $N/M$ has finite length.
\end{lemma}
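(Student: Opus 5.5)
The plan is to show that every point of $U$ lies outside the support of the finitely generated module $Q:=N/M$. Once that is done, $\Supp Q\subseteq\{\fm\}$. Since $R$ is Noetherian, a finitely generated module supported at the single maximal ideal $\fm$ has finite length: $\fm^n Q=0$ for some $n$, and each $\fm^iQ/\fm^{i+1}Q$ is a finite-dimensional $R/\fm=\C$-vector space. The equality $\widetilde M|_U=\widetilde N|_U$ is then just a restatement of $\widetilde Q|_U=0$, by exactness of $\widetilde{(\cdot)}$ and restriction.

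The key step is local. Fix a closed point $P\in U$, with maximal ideal $\fm_P\subset R$. Tensoring the exact sequence $0\to M\to N\to Q\to0$ with $\cO^{an}_P$ gives an exact sequence
\[
0\to M\otimes_R\cO^{an}_P\to N\otimes_R\cO^{an}_P\to Q\otimes_R\cO^{an}_P\to0 ,
\]
because $\cO^{an}_P$ is flat over $R$. Flatness holds since $\cO^{an}_P$ is flat over $R_{\fm_P}$: both have the same completion, which is faithfully flat over each. This exactness is also what makes ``as submodules'' meaningful in the hypothesis. The hypothesis therefore gives $Q\otimes_R\cO^{an}_P=0$. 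Now $Q_{\fm_P}\to Q\otimes_R\cO^{an}_P$ is obtained from $Q_{\fm_P}$ by the faithfully flat extension $R_{\fm_P}\to\cO^{an}_P$; faithful flatness holds because the extension is flat and local. Hence $Q_{\fm_P}=0$. If one prefers to avoid faithful flatness, completion does the same job: $\widehat{Q_{\fm_P}}=Q\otimes\widehat{\cO^{an}_P}=0$, and Krull's intersection theorem then forces the finitely generated module $Q_{\fm_P}$ to vanish.

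It remains to pass from closed points of $U$ to all points of $U$. The support $\Supp Q=V(\Ann Q)$ is closed. In the Jacobson ring $R$, every nonempty closed subset of $\Spec R$ other than $\{\fm\}$ contains a closed point different from $\fm$. Indeed, if a prime $\fp\ne\fm$ lies in $\Supp Q$, then $V(\fp)$ has dimension at least $1$, so it contains infinitely many maximal ideals, and at least one of them is not $\fm$. That maximal ideal is a closed point of $U$ in $\Supp Q$, contradicting the previous paragraph. So $\Supp Q\subseteq\{\fm\}$. Note that no gradedness is used anywhere; this is why the lemma may be stated for non-graded $M,N$.

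The only delicate point is the flatness and faithful-flatness of $R_{\fm_P}\to\cO^{an}_P$, which is the standard algebraic-to-analytic comparison (the GAGA-type local statement). I would simply cite it, or prove it via the common completion as indicated.
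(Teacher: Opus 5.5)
Your proof is correct and follows the paper's argument. Flatness of $R_{\fm_P}\to\cO^{an}_P$ via the common completion, hence faithful flatness since the map is local, kills $(N/M)_{\fm_P}$ at every closed point of $U$, and the Jacobson property of $R$ then gives $\Supp(N/M)\subseteq\{\fm\}$ and finite length. One small wording fix in the Jacobson step: if $\fp\ne\fm$ is itself maximal, $V(\fp)$ is a point rather than of dimension $\ge1$, but then $\fp$ is already a closed point of $U$ in the support.
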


\begin{proof}
We have that $R_P\to\cO^{an}_P$ is a flat local homomorphism of local rings \cite[\S2, Prop.~3 and Cor.~1]{Ser56} (the two rings have the same completion, and a pair of local rings with the same completion is a flat couple. See also \cite[App.~B]{Har77}), hence faithfully flat. Therefore $(N/M)\otimes_R\cO^{an}_P=0$ forces $(N/M)_P=0$. The support of $N/M$ is closed and contains no closed point of $U$. Since $R$ is a Jacobson ring every nonempty closed subset of $\Spec R$ other than $\{\fm\}$ contains a closed point of $U$, so $\Supp(N/M)\subseteq\{\fm\}$ and $N/M$ is annihilated by a power of $\fm$, hence of finite length.
\end{proof}

In our applications $M,N$ are obtained as kernels, annihilators, conductors, or intersections. These constructions commute with flat base change. Reduced structures and normalisations are also compatible with $R_P\to\cO^{an}_P$: the two rings have the same completion, reducedness and normality pass from the excellent ring $R_P$ to its completion \cite[IV$_2$, \S7.8]{EGA}, and they descend from the completion to $\cO^{an}_P$ along the faithfully flat map $\cO^{an}_P\to\widehat{\cO^{an}_P}$ \cite[\S23]{Mat86}. So ``$M\otimes\cO^{an}_P$'' is the analytic version of the same object at $P$, and it may be computed in any analytic coordinates, in particular in the normal forms of~\S\ref{sec:local}. Moreover every object we consider is an invariant of the map germ $f$ up to $\cA$-equivalence (see~\S\ref{sec:local0}), so the normal form of the \emph{multigerm} of $f$ over $P$ may be used. The lemma is also applied to modules that are not graded, such as $\cO_X+\cO_Xh$ with $h$ not necessarily homogeneous (Lemmas~\ref{lem:seminormal} and~\ref{lem:conductorD}). This is why no grading is assumed.

\begin{lemma}[depth two and the punctured cone]\label{lem:depth2}
Let $M$ be a graded $R$-module.
\begin{enumerate}[label=\textup{(\alph*)}]
\item There is an exact sequence $0\to H^0_\fm(M)\to M\to\Gamma(U,\widetilde M)\to H^1_\fm(M)\to0$ \cite[Ex.~III.2.3(e)]{Har77}, \cite{Gro67}, compatible with gradings.
\item If $\depth_\fm M\ge2$ then $H^0_\fm(M)=H^1_\fm(M)=0$ \cite[Thm~3.5.7]{BH98}, so $M=\Gamma(U,\widetilde M)$.
\item If $M\subseteq N$ are graded modules with $\depth M\ge2$, $\depth N\ge1$ and $\widetilde M|_U=\widetilde N|_U$, then $M=N$.
\end{enumerate}
\end{lemma}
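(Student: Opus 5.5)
Parts (a) and (b) are standard and will be quoted; the work is in (c). For (a), I would use the local cohomology sequence of the pair $(\Spec R,\{\fm\})$ with open complement $U$, in the form $0\to\Gamma_\fm(M)\to M\to\Gamma(U,\widetilde M)\to H^1_\fm(M)\to0$ from \cite[Ex.~III.2.3(e)]{Har77}. The end term is $H^1_\fm$ of the affine scheme, since $H^1(\Spec R,\widetilde M)=0$. For the grading I would compute with the Čech complex on homogeneous generators $g_1,\dots,g_r$ of $\fm$. This gives $\Gamma(U,\widetilde M)=\ker\bigl(\bigoplus_i M_{g_i}\to\bigoplus_{i<j}M_{g_ig_j}\bigr)$. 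Every map in this complex has degree zero, so the sequence is one of graded modules. For (b), the grade characterisation $\depth_\fm M=\min\{i:H^i_\fm(M)\ne0\}$ \cite[Thm~3.5.7]{BH98} gives $H^0_\fm(M)=H^1_\fm(M)=0$ when the depth is at least $2$, and then (a) yields $M\cong\Gamma(U,\widetilde M)$.

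For (c), I would compare the two restriction maps in a commutative square. Let $\rho_M\colon M\to\Gamma(U,\widetilde M)$ and $\rho_N\colon N\to\Gamma(U,\widetilde N)$ be the restrictions, and let $\iota\colon\Gamma(U,\widetilde M)\to\Gamma(U,\widetilde N)$ be induced by the inclusion $M\subseteq N$. By hypothesis $\widetilde M|_U=\widetilde N|_U$ as subsheaves, so $\iota$ is an equality. By (b), $\rho_M$ is an isomorphism. Since $\depth N\ge1$, we have $H^0_\fm(N)=0$, and by (a) $\rho_N$ is injective.

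Now take $n\in N$. Its restriction $\rho_N(n)$ lies in $\Gamma(U,\widetilde N)=\Gamma(U,\widetilde M)$, so $\rho_N(n)=\rho_M(m)=\rho_N(m)$ for some $m\in M$. Injectivity of $\rho_N$ then gives $n=m\in M$, and hence $M=N$.

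An alternative route avoids sheaves. By hypothesis $N/M$ is supported at $\fm$, so it has finite length. From $0\to M\to N\to N/M\to0$ we get the exact piece $H^0_\fm(N)\to H^0_\fm(N/M)\to H^1_\fm(M)=0$. The left term is zero, so $N/M=H^0_\fm(N/M)=0$. The only point needing care is that "$\widetilde M|_U=\widetilde N|_U$" is meant as an equality of subsheaves of $\widetilde N|_U$, which is exactly what makes $\iota$ the identity. I do not expect any real obstacle.
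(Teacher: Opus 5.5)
Your proposal is correct and essentially the paper's argument. The alternative route in your last paragraph is exactly the paper's proof of (c): the local cohomology sequence of $0\to M\to N\to N/M\to0$, with $H^0_\fm(N)=H^1_\fm(M)=0$ forcing $N/M=0$. Your main diagram chase uses the same two vanishings through sections over $U$ via (a) and (b). For (a) and (b) the paper, like you, simply cites the references.
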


\begin{proof}[Proof of \textup{(c)}]
Since $N/M$ is supported at $\fm$, we have $H^0_\fm(N/M)=N/M$. The local cohomology sequence of $0\to M\to N\to N/M\to0$ gives $H^0_\fm(N)\to H^0_\fm(N/M)\to H^1_\fm(M)$ exact. The depth assumptions give $H^0_\fm(N)=H^1_\fm(M)=0$, so $N/M=0$.
\end{proof}

In applications, $N$ is a submodule of a module of positive depth, so $\depth N\ge1$ is automatic. It therefore suffices to exhibit $M\subseteq N$, prove $\depth M\ge2$, and check equality away from the vertex.

\subsection{Depth facts}\label{sec:depth}

For finitely generated graded modules over $R$ we have

\begin{enumerate}[label=\textup{(D\arabic*)}]
\item\label{D1} \textbf{Depth lemma} \cite[Prop.~1.2.9]{BH98}. For $0\to M'\to M\to M''\to0$:
\[
\begin{array}{lcl}
    \depth M & \ge & \min(\depth M',\depth M''),\\
    \depth M'& \ge & \min(\depth M,\depth M''+1),\\
    \depth M'' & \ge& \min(\depth M'-1,\depth M).
\end{array}
\]

\item\label{D2} \textbf{Submodules of depth-positive modules.} If $\depth F\ge1$ then every submodule $A\subseteq F$ has depth $\ge1$ (with $\depth 0=\infty$): an $F$-regular element of $\fm$ is injective on $A$. In particular submodules of a free module over a ring of positive depth, and submodules of a reduced ring of positive dimension (which has depth $\ge1$, since $\fm$ is not a minimal prime and a reduced ring has no embedded primes), have depth $\ge1$. A nonzero module of finite length has depth $0$. \emph{No domain or torsion-freeness hypothesis is needed.}
\item\label{D3} \textbf{Dual modules.} If $\depth N\ge2$ then $\depth\Hom_R(M,N)\ge2$ for every finitely generated $M$ \cite[Ex.~1.4.19]{BH98}. Proof: a presentation $R^b\to R^a\to M\to0$ gives $0\to\Hom(M,N)\to N^a\to I\to0$ with $I\subseteq N^b$; $\depth I\ge1$ by \ref{D2}, and \ref{D1} gives $\depth\Hom(M,N)\ge\min(\depth N^a,\depth I+1)\ge2$.
\item\label{D4} \textbf{Cohen--Macaulay facts} \cite[Ch.~1--2]{BH98}. A quotient of a polynomial ring by a regular sequence (complete intersection) is Cohen--Macaulay and Gorenstein; an ideal generated by $r$ elements of height $r$ in a CM ring is generated by a regular sequence. A hypersurface section $R/(h)$ of a CM ring $R$ by a nonzerodivisor $h$ is CM. A CM ring has no embedded primes. Hence a CM ring which is reduced at its minimal primes is reduced. In a normal domain every principal ideal $(h)\ne0$ is unmixed of height one, $(h)=\bigcap_{\fp}\fp^{(\ord_{\fp}h)}$ over the height-one primes. In particular if $\ord_{\fp}(h)\le1$ for all $\fp$ then $(h)$ is the reduced ideal of $V(h)$.
\item\label{D5} \textbf{$a$-invariant} \cite[Def.~3.6.13, Cor.~3.6.14 and the remark after Thm~3.6.19]{BH98}, \cite{GW78}. If $R$ is graded CM of dimension $d$ with Hilbert series $H_R(t)$, then
\[a(R):=\max\{n:H^d_\fm(R)_n\ne0\}=\deg H_R(t)\]
(degree of the rational function $=\deg$ numerator $-\deg$ denominator). (Proof from the cited statements: for a homogeneous system of parameters $x_1,\dots,x_d$ of degrees $\alpha_i$, which is an $R$-sequence, $a(R/xR)=a(R)+\sum\alpha_i$ by Cor.~3.6.14. For the zero-dimensional ring $R/xR$, $a(R/xR)=\max\{n:(R/xR)_n\ne0\}=\deg H_{R/xR}(t)$; and $H_R(t)=H_{R/xR}(t)/\prod(1-t^{\alpha_i})$.) In particular, for $d=1$: $H^1_\fm(R)_n=0$ for $n>\deg H_R(t)$, and by Lemma~\ref{lem:depth2}(a) $R_n=\Gamma(U,\cO)_n$ for $n>\deg H_R(t)$.
\end{enumerate}

\textbf{Depth over a finite extension.} If $R\to R'$ is a finite graded homomorphism of graded rings as above (e.g.\ $\cO_X\to\cO_S$, $\cO_{D_X}\to\cO_{\tD}$ below) then $\sqrt{\fm_RR'}=\fm_{R'}$, local cohomology is unchanged on replacing an ideal by its radical, and hence for a finitely generated $R'$-module $M$ one has $\depth_{\fm_R}M=\depth_{\fm_{R'}}M$ \cite[Ex.~1.2.26]{BH98}. So a CM $R'$-module of dimension $d$ is a CM $R$-module of dimension $d$ (``maximal CM'' if $d=\dim R$).

\subsection{Saito's criterion}\label{sec:saito}

\begin{enumerate}[label=\textup{(S)}]
\item \cite[(1.8)(ii)]{Sai80} Let $(M,z)$ be a complex manifold of dimension $n$ with coordinates $z_1,\dots,z_n$ near a point, $D\subset M$ a reduced divisor with local equation $h$, and $\delta_1,\dots,\delta_n\in\Dlog D$ (germs of vector fields with $\delta_i(h)\in(h)$). Then $\{\delta_i\}$ is a free basis of $\Dlog D$ near the point if and only if $\det(\delta_i(z_j))=u\,h$ with $u$ a unit.
\end{enumerate}

\begin{remark}
The application in Proposition~\ref{prop:freeness} is to a smooth curve on a smooth surface. An elementary proof is included there. The bases used in Proposition~\ref{prop:nc}(c) and Lemma~\ref{lem:normalisation}(d) are also verified directly.
\end{remark}

\subsection{Seidenberg's theorem}\label{sec:seidenberg}

\begin{enumerate}[label=\textup{(Se)}]
\item \cite[\S3, Theorem p.~168 and Corollary p.~169]{Sei66} Let $A$ be a noetherian domain containing $\Q$ and $\bar A$ its integral closure in its field of fractions. Every derivation $\delta:A\to A$ extends uniquely to a derivation $\bar A\to\bar A$. In particular every derivation of $A$ maps the conductor of $A\subset\bar A$ into itself.
\end{enumerate}
We use it for $A=\cO_X$ (a domain, \S\ref{sec:image}) with $\bar A=\cO_S$, and for $A=\cO_{D_X}$ (a domain, Lemma~\ref{lem:normalisation}(b)) with $\bar A=\cO_{\tD}$ (\S\ref{sec:normalisation}). The reduced-ring version of Seidenberg's theorem is not needed. In the analytic local rings at points of the punctured cone the relevant rings may be reducible, but there all derivation modules are computed explicitly (\S\ref{sec:local}).

\begin{lemma}\label{lem:seidenberg-graded}
In the situation of \textup{(Se)}, if $A$ and $\bar A$ are graded (with $A\subseteq\bar A$ a graded inclusion) and $\delta$ is homogeneous of degree $\delta_0$, then its extension is homogeneous of degree $\delta_0$. In particular the Euler derivation of $A$ extends to the Euler derivation of $\bar A$.
\end{lemma}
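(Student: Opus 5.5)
The plan is to write $\bar\delta$ for the unique extension of $\delta$ to $\bar A$ given by (Se), and to show that each homogeneous component of $\bar\delta$ is again a derivation extending a homogeneous piece of $\delta$. Uniqueness in (Se) then kills every component except the one of degree $\delta_0$.

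First make the decomposition precise. Since $\bar A$ is a finitely generated graded $\C$-algebra (it is finite over $A$ in our applications; in general, the integral closure of a graded noetherian domain in its fraction field is graded), choose homogeneous algebra generators $b_1,\dots,b_r$ of $\bar A$. The values $\bar\delta(b_j)$ are finite sums of homogeneous elements, so only finitely many degrees $n$ occur in them. For each integer $n$, define $\bar\delta_n:\bar A\to\bar A$ on a homogeneous element $b$ by letting $\bar\delta_n(b)$ be the homogeneous component of $\bar\delta(b)$ of degree $\deg b+n$, and extend additively. A direct check on products of homogeneous elements shows that $\bar\delta_n$ is a $\C$-derivation, and that $\bar\delta=\sum_n\bar\delta_n$ as a finite sum. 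The finiteness holds because a derivation is determined by its values on the generators, and $\bar\delta_n$ vanishes on all generators for all but finitely many $n$.

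Next compare with $\delta$. For homogeneous $a\in A$, the element $\delta(a)$ is homogeneous of degree $\deg a+\delta_0$ and equals $\bar\delta(a)$. Since the inclusion $A\subseteq\bar A$ is graded, comparing components gives $\bar\delta_{\delta_0}(a)=\delta(a)$ and $\bar\delta_n(a)=0$ for $n\ne\delta_0$. Thus for $n\ne\delta_0$, $\bar\delta_n$ is a derivation of $\bar A$ extending the zero derivation of $A$. The uniqueness part of (Se), applied to the zero derivation, whose obvious extension is $0$, forces $\bar\delta_n=0$. Hence $\bar\delta=\bar\delta_{\delta_0}$ is homogeneous of degree $\delta_0$.

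For the final assertion, the Euler derivation $E_{\bar A}$ of $\bar A$ is a derivation. Its restriction to $A$ is the Euler derivation of $A$, because the grading of $A$ is induced from that of $\bar A$. By uniqueness, $E_{\bar A}$ is therefore the extension of $E_A$.

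The only mildly delicate point is the finiteness of the decomposition $\bar\delta=\sum_n\bar\delta_n$, which is handled by finite generation of $\bar A$. Alternatively, one can avoid decomposing altogether with a $\C^*$-argument. For $\lambda\in\C^*$ acting by $\lambda\cdot b=\lambda^{\deg b}b$, the conjugate $\lambda^{-\delta_0}\lambda\circ\bar\delta\circ\lambda^{-1}$ is again a derivation of $\bar A$ extending $\delta$, so by uniqueness it equals $\bar\delta$. This equivariance says exactly that $\bar\delta$ is homogeneous of degree $\delta_0$. I would present this second argument as the main proof, since it is shortest.
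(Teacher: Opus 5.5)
Your main argument is exactly the paper's proof: conjugate $\bar\delta$ by the grading automorphisms $b\mapsto\lambda^{\deg b}b$ (which preserve $A$), rescale by $\lambda^{\pm\delta_0}$, and invoke uniqueness in (Se). The paper uses $\lambda$ where you use $\lambda^{-1}$, and it handles the Euler-derivation claim by uniqueness in the same way. Your componentwise alternative is also correct, and it does not need the finiteness of $\sum_n\bar\delta_n$: once $\bar\delta_n=0$ for $n\ne\delta_0$, you already have $\bar\delta(b)=\bar\delta_{\delta_0}(b)$ for each homogeneous $b$.
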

\begin{proof}
For $\lambda\in\C^*$ let $\lambda^*$ denote the grading automorphism $h\mapsto\lambda^{\deg h}h$ of $\bar A$, which preserves~$A$. Then $\lambda^{\delta_0}(\lambda^*)^{-1}\bar\delta\lambda^*$ is a derivation of $\bar A$ extending $\delta$, hence equals $\bar\delta$ by uniqueness. This is homogeneity. The Euler derivations of $A$ and $\bar A$ agree on $A$, so uniqueness gives the last claim.
\end{proof}

\subsection{Stability off the origin and stable multigerms}\label{sec:stable}

\begin{enumerate}[label=\textup{(MG)}]
\item \textbf{Mather--Gaffney criterion} \cite[Thm~4.5]{MNB20} (also \cite[Thm~2.11]{MM89}, attributed there to Gaffney and to \cite[Thm~2.1]{Wal81}). A germ $f:(\C^n,S')\to(\C^p,0)$, $S'$ a finite set, has finite $\Ae$-codimension if and only if it has a representative $f:\mathcal U\to\mathcal V$ such that for every $P\in\mathcal V\setminus\{0\}$ the multigerm of $f$ at $f^{-1}(P)\cap\mathcal U$ is stable (``isolated instability'').
\end{enumerate}

In the quasihomogeneous case the set of points $P\in X$ at which the multigerm is \emph{not} stable is $\C^*$-invariant and closed (by Mather's infinitesimal criterion it is the support of the finitely generated $\cO_T$-module $\NAe$ of \S\ref{sec:comparison}), hence, being contained in $\{0\}$ near $0$, equal to $\{0\}$ (or empty) globally on the affine cone. Hence we have that {\em the multigerm of $f$ over every point $P\in X\setminus\{0\}$ is stable.} Likewise every $\cA$-invariant stratum of $X$ (e.g. multisingularity loci, like double points, triple points, cross-caps, etc.)\ is a $\C^*$-invariant subvariety.

\begin{enumerate}[label=\textup{(St)}]
\item The list of {\em stable} multisingularities $(\C^m,0)\to (\C^{m+1},0)$ for $m\leq 3$ is well known (Morin \cite{Mor65} for the monogerms; e.g.\ \cite[Thm~7.1, Cor.~4.7 and \S4.3]{MNB20}, or \cite{TPP})
\[
\begin{array}{ccl}
    \text{source codimension } 0: & & A_0 \\
    \text{source codimension } 1: & & A_0^2 \\
    \text{source codimension } 2: & & A_0^3, A_1 \\
    \text{source codimension } 3: & & A_0^4, A_0A_1
\end{array}
\]
Here $A_0$ is the immersion germ and $A_1$ is the cross-cap, which, following Mond, we denote by
\[
S_0:\quad(x_1,\dots,x_{m-1},y)\longmapsto(x_1,\dots,x_{m-1},\,y^2,\,x_1y);
\]
accordingly we write $S_0A_0$ for the multigerm $A_0A_1$. The next Morin singularity, $A_2$, first occurs for $m\ge4$.
A multigerm is stable if and only if each branch is stable and the analytic strata $\tau(f_i)$ of the branches meet in general position \cite[Thm~3.3]{MNB20}, \cite[Thm~2.12]{MM89}. For immersed branches this means that their images are transverse smooth hypersurfaces, which can be straightened simultaneously to coordinate hyperplanes.
\end{enumerate}

\subsection{Multiple point spaces}\label{sec:multiple}

For a corank-one germ $f=(x,p,q)$ we use the usual divided difference notation, as follows. For a polynomial $h$, we define $h[i]=h(x,y_i)$ and for $j\geq 2$
\[
h[i_1,\ldots,i_j]=\frac{h[i_1,\ldots,i_{j-1}]-h[i_1,\ldots,i_{j-2},i_j]}{y_{i_{j-1}}-y_{i_j}}.
\]
This is a polynomial depending only on the set $I=\{i_1,\ldots,i_j\}$, symmetric in the $y$ variables.
The $j$-th \emph{multiple point scheme} is
\[
D^j(f)=V\big(p[1,\dots,i],\,q[1,\dots,i]\ :\ 2\le i\le j\big)\subset\C^{m-1}_x\times\C^j_{y_1,\dots,y_j}.
\]
For a partition $P=(p_1,\dots,p_r)$ of $j$, $D^j(f,P)$ is the subscheme of $\C^{m-1}\times\C^r$ defined by the same divided differences with $y$'s repeated according to $P$ (confluent divided differences. Equivalently, by the ideal $I_{D^j}$ \emph{together with the diagonal equations} of $\Delta_P$, i.e.\ its image in the coordinate ring of the diagonal). The symmetric group $S_j$ acts on $D^j(f)$ by permuting the $y_i$. Write $e(j,P)=m+1-2j+r$ for the expected dimension.

\begin{enumerate}[label=\textup{(MM)}]
\item  \textbf{Marar--Mond.} Let $f$ be $\cA$-finite (in particular $\Ae$-finite) of corank one. If $e(j,P)\ge0$ then $D^j(f,P)$ is either \emph{empty} or an isolated complete intersection singularity (ICIS) of dimension $e(j,P)$. If $e(j,P)<0$ then $D^j(f,P)\subseteq\{0\}$ set-theoretically (it may be a fat point). References: \cite[Thm~2.14(ii) and Cor.~2.15]{MM89}. The partitioned statement is also quoted in this form in \cite[Def.~2.3, Cor.~2.5]{NBPT}, and the unpartitioned one in \cite[Prop.~3.2, Thm~3.3]{NBPS}.
\end{enumerate}
In particular, $D^2(f)$ is an ICIS of dimension $m-1$ (nonempty, since it contains the origin and $\Sigma$ below). The scheme $\Sigma:=D^2(f,(2))=V(p_y,q_y)\subset\C^m$ is an ICIS of dimension $m-2$: for $m=3$ it is a nonempty reduced curve, while for $m=2$ it is a fat point at $0$. Finally, $D^3(f)$ is empty or an ICIS of dimension $m-2$, while $D^{m+1}(f)$ and $D^m(f,(2,1^{m-2}))$ are supported at the origin. An ICIS of positive dimension is reduced and smooth off the origin. Here ``off the origin'' may be taken globally on the affine cone by $\C^*$-invariance.  \emph{The triple point scheme $D^3(f)$ may be empty} even for $m=3$. See Lemma~\ref{lem:Lambda3} and Convention~\ref{conv:empty}. Every statement below about ``the triple curve'' allows the empty set.

The set-theoretic meaning: for $y_1\ne y_2$, $(x,y_1,y_2)\in D^2(f)$ iff $f(x,y_1)=f(x,y_2)$ (a double point pair); $(x,y,y)\in D^2(f)$ iff $p_y=q_y=0$ at $(x,y)$, i.e.\ $f$ is not immersive there (the cross-cap locus $\Sigma$).

The zero-dimensional multiple-point schemes above are cones, and hence are supported at the vertex. For $m=3$, $D^4(f)$ and $D^3(f,(2,1))$ exclude $A_0^4$ and $S_0A_0$ away from the origin. Thus only $A_0$, $A_0^2$, $A_0^3$ and $S_0$ occur on $X\setminus\{0\}$. For $m=2$, $D^3(f)$ and $D^2(f,(2))$ exclude triple points and cross-caps there, leaving only $A_0$ and $A_0^2$.

\subsection{\texorpdfstring{$\C^*$}{C*}-orbits and Euler fields}\label{sec:orbits}

\begin{lemma}\label{lem:orbit}
Let $C\subset\C^N$ be a $\C^*$-invariant (for positive weights) reduced curve and $P\in C\setminus\{0\}$ a smooth point of $C$. Then $T_PC=\C\cdot E(P)$ with $E(P)\ne0$.
\end{lemma}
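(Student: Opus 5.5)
The plan is to show that the orbit through $P$ is an open subset of $C$ near $P$, and hence that the tangent line to the orbit is the tangent line to the curve.

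First, $E(P)\neq0$. The Euler field $E=\sum w_iz_i\partial_{z_i}$, with all weights $w_i>0$, vanishes only where every coordinate vanishes. Since $P\ne0$, some coordinate $z_i(P)\ne0$, so $E(P)$ has $i$-th component $w_iz_i(P)\ne0$.

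Next, $E(P)\in T_PC$. Consider the orbit map $\phi:\C^*\to\C^N$, $\phi(\lambda)=(\lambda^{w_1}z_1(P),\dots,\lambda^{w_N}z_N(P))$. By $\C^*$-invariance of $C$ it lands in $C$, and $\phi'(1)=E(P)$. Because $C$ is smooth at $P$ and reduced, it is locally a one-dimensional complex submanifold. A holomorphic curve contained in a submanifold has its velocity in the tangent space, so $E(P)\in T_PC$.

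Since $\dim T_PC=1$ at a smooth point of a reduced curve, and $E(P)$ is a nonzero vector in it, we get $T_PC=\C\cdot E(P)$. Equivalently, the orbit $\C^*\cdot P$ is a one-dimensional (since $E(P)\ne0$) irreducible constructible subset of $C$. Its closure is therefore the irreducible component of $C$ through $P$, and near $P$ the two agree.

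The only point needing care is that ``smooth point'' is meant for the reduced structure, so that $T_PC$ is one-dimensional and contains the tangent vectors of analytic arcs in $C$. This is where reducedness is used, and I expect no real obstacle.
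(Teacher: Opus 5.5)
Your proof is correct and follows essentially the same route as the paper: both use the orbit map $\lambda\mapsto\lambda\cdot P$, whose velocity $E(P)$ at $\lambda=1$ lies in the one-dimensional tangent line $T_PC$. The only difference is that you check $E(P)\ne0$ directly from the positivity of the weights, whereas the paper deduces it from the finiteness of the stabiliser of $P$ (the orbit map being \'etale onto its image at $\lambda=1$). Your coordinate check is the more elementary of the two.
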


\begin{proof}
Each irreducible component of $C$ is $\C^*$-invariant (connectedness of $\C^*$) and $1$-dimensional, hence the closure of the orbit of any of its points $\ne0$. The orbit map $\C^*\to C$, $\lambda\mapsto\lambda\cdot P$, has finite fibres (the stabiliser of $P\ne0$ is finite for positive weights), hence is \'etale onto its image at $\lambda=1$. Its differential there is $E(P)$, so $E(P)\ne0$ spans $T_PC$.
\end{proof}

\section{The image, the double-point function and the triple-point function}\label{sec:image}

\subsection{\texorpdfstring{$\cO_X\subset\cO_S$}{OX-OS} and the equation of \texorpdfstring{$X$}{X}}\label{sec:equation}

Since $p(0,y)=cy^\mu$ with $c$ a nonzero scalar (positivity of the weights), $\cO_S=\C[x,y]$ is a free $\C[x,Y]$-module ($Y\mapsto p$) with basis $1,y,\dots,y^{\mu-1}$. In particular $f$ is finite. Let $Q$ be the $\mu\times\mu$ matrix of multiplication by $q$ in this basis and $G:=\det(Z\cdot\Id-Q)\in\cO_T$, weighted homogeneous of degree $\mu k$. Then $G\circ f=0$ (Cayley--Hamilton: $q$ is a root of its own characteristic polynomial), so $(G)\subseteq\ker f^*$, and $\ker f^*$ is a height-one prime (the image of $\Spec\cO_S\to\Spec\cO_T$ under a finite map is an irreducible hypersurface). Since $D^2(f)$ has dimension $m-1<m$ (MM), $f$ is generically one-to-one, so the degree of $X\to\C^m_{x,Y}$ is $\mu=\deg_ZG$. Hence the characteristic polynomial $G$ equals the minimal polynomial of $q$ over $\C(x,Y)$, $\ker f^*=(G)$, and $G$ is reduced, and we have
\[
\cO_X=f^*\cO_T=\cO_T/(G),\qquad\Hilb\cO_X=(1-t^{\mu k})\Hilb\cO_T .
\]
Since $\cO_X$ is a hypersurface ring, it is Cohen--Macaulay of dimension $m$, and a domain. We have that $\cO_X\subseteq\cO_S$ is a finite birational extension with $\cO_S$ regular: {\em $\cO_S$ is the normalisation of $\cO_X$.} In particular Seidenberg's theorem (Se) applies to $\cO_X\subset\cO_S$.

\subsection{The double-point function \texorpdfstring{$\Lambda$}{Lambda}}\label{sec:double}

Let $p[12],q[12]\in\C[x,y_1,y_2]$ be the first divided differences. As a polynomial in $y_2$, $p[12]=c\,y_2^{\mu-1}+\dots$ has constant leading coefficient. Define
\[
\Lambda:=\Res_{y_2}\big(p[12],q[12]\big)\in\C[x,y_1]=\cO_S .
\]

\begin{lemma}\label{lem:Lambda}
\begin{enumerate}[label=\textup{(\alph*)}]
\item $\Lambda$ is weighted homogeneous of degree $(\mu-1)(k-v)$.
\item Over the splitting field, $\Lambda=c'\prod_{j=2}^{\mu}q[1j]$, where $y_2,\dots,y_\mu$ are the roots of $p(x,\cdot)=p(x,y_1)$ other than $y_1$ (with multiplicity) and $q[1j]=(q(x,y_1)-q(x,y_j))/(y_1-y_j)$.
\item $V(\Lambda)=D:=\overline{\{s\in S:\ \#f^{-1}f(s)\ge2\}}=\pi_1(D^2(f))$, where $\pi_1(x,y_1,y_2)=(x,y_1)$; $D$ has pure dimension $m-1$ and $D\supseteq\Sigma$.
\end{enumerate}
\end{lemma}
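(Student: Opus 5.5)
Part (b) comes first, and (a) and (c) are read off from it. Because $p[12]$ is monic in $y_2$ up to the constant $c$, the Poisson product formula for the resultant applies. Over an algebraic closure of $\C(x,y_1)$ it gives
\[
\Res_{y_2}(p[12],q[12]) = c^{\deg_{y_2} q[12]}\prod_{j} q[12]\big|_{y_2=y_j},
\]
where $y_j$ runs over the roots of $p[12](x,y_1,\cdot)$ with multiplicity. Write $p(x,y)-p(x,y_1)=c\prod_{j=1}^{\mu}(y-y_j)$ with $y_1$ one of the roots. Then $p[12]=(p(x,y_1)-p(x,y_2))/(y_1-y_2)=c\prod_{j\ge2}(y_2-y_j)$, so the roots are exactly $y_2,\dots,y_\mu$. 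Substituting $y_2=y_j$ into $q[12]$ gives $q[1j]$, which proves (b). If one is worried about repeated roots, the formula still holds as an identity of polynomials by specialisation from the generic case.

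For (a), put the grading on $\C[x,y_1,y_2]$ with both $y_i$ of weight $v$. Then $p[12]$ and $q[12]$ are weighted homogeneous of degrees $(\mu-1)v$ and $k-v$. One can use the standard homogeneity of the resultant, but here it is easier to read it from (b). Let $\C^*$ act by $x_i\mapsto t^{a_i}x_i$ and $y\mapsto t^v y$. This action permutes the roots $y_j$, scaling each by $t^v$. Each $q[1j]$ then scales by $t^{k-v}$, and there are $\mu-1$ factors. So $\Lambda$ is homogeneous of degree $(\mu-1)(k-v)$, and it is nonzero because it is nonzero generically (see below).

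For (c), start from (b). At a point $s=(x,y_1)$, $\Lambda(s)=0$ exactly when $q[1j](s)=0$ for some $j\ge2$. If that root satisfies $y_j\ne y_1$, the condition says $q(x,y_j)=q(x,y_1)$, and $p$ already agrees at $y_j$ and $y_1$ by construction, so $(x,y_j)$ is a second preimage of $f(s)$. If instead $y_j=y_1$, then $p_y(s)=0$ and $q[11]=q_y(s)=0$, so $s\in\Sigma$. Conversely, any double point pair $(x,y_1,y_2)\in D^2(f)$ gives a root $y_2$ of $p[12]$ at which $q[12]$ vanishes. Hence $V(\Lambda)=\pi_1(D^2(f))$, where $\pi_1(D^2(f))$ is closed because $\pi_1|_{D^2(f)}$ is finite ($p[12]$ is monic in $y_2$). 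This set contains $\Sigma=D^2(f,(2))$, which sits inside $D^2(f)$ on the diagonal. It equals $D$: off $\Sigma$ a point of $\pi_1(D^2(f))$ has two distinct preimages, and by (MM) the ICIS $D^2(f)$ of dimension $m-1$ has no component inside the diagonal, since $\Sigma$ has dimension $m-2$. So $\pi_1(D^2(f))$ is the closure of the genuine double-point locus.

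Finally, pure dimension $m-1$ follows because $V(\Lambda)$ is a hypersurface, provided $\Lambda\ne0$. The step I expect to need the most care is $\Lambda\ne0$. If $\Lambda$ were identically zero, every point of $S$ would be a double point or lie in $\Sigma$, so $\pi_1(D^2(f))=S$. That is impossible, since $\dim D^2(f)=m-1<m$ and $\pi_1$ is finite.
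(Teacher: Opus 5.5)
Your proposal is correct and follows essentially the same route as the paper: the product formula for the resultant, using the constant leading coefficient of $p[12]$, gives (b); the weighted homogeneity of each factor $q[1j]$ gives (a); and (c) comes from the common-root characterisation of $V(\Lambda)$ together with the density of off-diagonal points in the pure $(m-1)$-dimensional $D^2(f)$. The only minor variation is how you get pure dimension $m-1$: you use that $V(\Lambda)$ is a hypersurface once $\Lambda\ne0$, which you check via $\dim\pi_1(D^2(f))\le m-1<m$, whereas the paper reads it off directly as the finite image of the pure $(m-1)$-dimensional $D^2(f)$.
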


\begin{proof}
(a),(b) For $A$ of degree $\mu-1$ in $y_2$ with constant leading coefficient $c$, $\Res_{y_2}(A,B)=c^{\deg_{y_2}B}\prod_{A(r)=0}B(r)$. The roots of $p[12](x,y_1,\cdot)$ are the $y_j$, $j\ge2$, because $p(x,y_2)-p(x,y_1)=(y_2-y_1)p[12]$. Each factor $q[12](x,y_1,y_j)$ is weighted homogeneous of degree $k-v$ in $(x,y_1)$ once $y_j$ (of weight $v$) is expressed through $(x,y_1)$.

(c) $\Lambda(x,y_1)=0$ if and only if $p[12](x,y_1,\cdot)$ and $q[12](x,y_1,\cdot)$ have a common root $y_2$ (the leading coefficient of $p[12]$ in $y_2$ never vanishes), that is, if and only if $(x,y_1,y_2)\in D^2(f)$ for some $y_2$. For $y_2\ne y_1$ this is a double point pair and for $y_2=y_1$ it is a point of $\Sigma$. Hence $V(\Lambda)=\pi_1(D^2(f))$, which is the closure of the set of genuine double points together with $\Sigma$. Since $D^2(f)$ has pure dimension $m-1$ (MM) and $\pi_1$ is finite, $\pi_1(D^2)$ has pure dimension $m-1$; $\Sigma$ has dimension $m-2$ and is contained in the closure of the double points (every component of $D^2(f)$ has dimension $m-1$ and the diagonal part $D^2\cap\{y_1=y_2\}\cong\Sigma$ has dimension $m-2$, so the diagonal points are limits of off-diagonal points).
\end{proof}

\begin{lemma}[$\Lambda$ is reduced]\label{lem:Lambda-reduced}
$\Lambda$ is square-free, and $V(\Lambda)=D$ is a reduced hypersurface. Hence $(\Lambda)=I(D)$ is the reduced ideal of $D$.
\end{lemma}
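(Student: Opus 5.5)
The plan is to show that $\Lambda$ vanishes to order exactly one along every irreducible component of $D=V(\Lambda)$. Since $\cO_S$ is a UFD, this makes $\Lambda$ square-free. The ideal $(\Lambda)$ is then radical, so it equals $I(D)$ by the Nullstellensatz.

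Each irreducible component of $D$ is $\C^*$-invariant and has dimension $m-1\ge1$ (Lemma~\ref{lem:Lambda}(c)). The multiplicity can therefore be tested at a general point $s$ of the component, which we take off the origin and outside $\Sigma$, outside the closure of the triple-point locus, and outside the other components. Such points exist because $\Sigma$ and the triple locus have dimension $\le m-2$ (MM).

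At such an $s$ the fibre $f^{-1}f(s)$ consists of exactly two points $s=(x,y_1)$ and $s'=(x,y_2)$ with $y_2\ne y_1$. By (MG) and (St) the multigerm of $f$ over $f(s)$ is stable of type $A_0^2$: two immersive branches whose images are transverse smooth hypersurfaces.

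Next we use the product formula of Lemma~\ref{lem:Lambda}(b), $\Lambda=c'\prod_{j\ge2}q[1j]$. Near $s$, the roots $y_j(x,y_1)$ of $p(x,\cdot)=p(x,y_1)$ other than $y_1$ are analytic functions, because $s\notin\Sigma$ and $s$ is not a triple point. To justify this, write $p(x,y)-p(x,y_1)=(y-y_1)\,p[12](x,y_1,y)$ and note that the roots of $p[12](x,y_1,\cdot)$ at $s$ are simple. That simplicity requires the auxiliary condition $p_y(x,y_2)\ne0$. Where it fails one can argue by genericity, or perform a linear change $p\mapsto p+\alpha q$ in the target (an $\cA$-equivalence which does not change $\Lambda$ up to a unit away from bad points). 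This is a technical point I would handle by shrinking to a general point.

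With the roots analytic near $s$, every factor $q[1j]$ is nonvanishing at $s$ except the one with $y_j(s)=y_2$. So locally $\Lambda=(\text{unit})\cdot q[12](x,y_1,y_2(x,y_1))$. The function $g(x,y_1):=q(x,y_1)-q(x,y_2(x,y_1))$ cuts out $f^{-1}$ of the image of the branch at $s'$, pulled back by the branch at $s$, and $q[12]$ differs from $g$ by the unit $(y_1-y_2)^{-1}$.

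The main obstacle is showing that $g$ has nonzero differential at $s$. The transversality of the two immersed branches in $A_0^2$ should give exactly this. In local target coordinates, the image of the branch at $s'$ is a smooth hypersurface $\{H=0\}$, and the zero set $D$ near $s$ is $\{H\circ f=0\}$. Transversality of the two images means $d(H\circ f)(s)\ne0$. The function $H\circ f$ restricted to $\{p(x,y)=p(x,y_1)\}$ is equivalent to $g$ up to a unit, because the image of the branch at $s'$ is the graph $\{Z=q(x,y_2(x,\cdot))\}$ over $(x,Y)$ near $f(s')$, and this graph description is valid precisely when $p_y(s')\ne0$; otherwise one uses the symmetric description via $q$.

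Hence $\Lambda$ is reduced along every component, so it is square-free, and $(\Lambda)=I(D)$.
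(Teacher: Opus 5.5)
There is a genuine gap. Your local analytic route is correct wherever it applies. If the partner $s'$ of $s$ has $p_y(s')\neq0$, then $y_2$ is a simple root of $p[12](x,y_1,\cdot)$ and the other branch is a graph over $(x,Y)$. Transversality then gives $\Lambda=(\text{unit})\cdot H\circ f$ with $d(H\circ f)(s)\neq0$.

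\textbf{Where it fails.} You propose to avoid the case $p_y(s')=0$ by passing to a general point. That does not work in general, because $p_y$ can vanish on an entire component of $D$. Take $f=(x,\,y^3-3x^2y,\,y^4+5x^3y)$, with all weights $1$.
\begin{itemize}
\item It is $\Ae$-finite by (MM$'$): $\Sigma=\{0\}$, $D^2(f)$ is the cone over six reduced points, there are no triple points off the origin, and $D^4(f)=\emptyset$.
\item Since $f(x,x)=f(x,-2x)$, the lines $y=x$ and $y=-2x$ are partnered components of $D$.
\item $p_y=3(y-x)(y+x)$ vanishes identically on $y=x$.
\item At every point $s=(x,-2x)$ with $x\neq0$ one has $p[12](x,-2x,y_2)=(y_2-x)^2$, so the partner root is double along the whole component.
\end{itemize}
Near $s$ the product formula therefore contains two factors $q[1j]$ that vanish at $s$, and neither is analytic on its own. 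A naive count suggests order $2$.

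None of your fallbacks closes this.
\begin{itemize}
\item Genericity is not available for a fixed $f$.
\item Replacing $p$ by $p+\alpha q$ produces a different resultant. The claim that it differs from $\Lambda$ by a unit near $s$ is essentially the lemma itself.
\item The graph description via $q$ gives a smooth local equation $H$ of the other branch. But you never relate $H\circ f$ to the product of the clustered factors.
\end{itemize}

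\textbf{What is missing, and how to supply it.} You need an argument that the product over the whole cluster of roots near $y_2(s)$ vanishes to order exactly one. Weierstrass preparation gives it. Suppose $q_y(s')\neq0$. Let $\chi$ invert $y\mapsto q(x,y)$ near $s'$ and put $\tilde p(x,Z)=p(x,\chi(x,Z))$. The cluster roots $y_j$ correspond to the roots $Z_j=q(x,y_j)$ of $\tilde p(x,Z)=Y$ near $q(s')$. So $\prod_j\bigl(q(x,y_1)-q(x,y_j)\bigr)$ is the Weierstrass polynomial of $\tilde p(x,Z)-Y$ evaluated at $f(x,y_1)$. Hence it is a unit times $H\circ f$ with $H=Y-\tilde p(x,Z)$, and your transversality argument then finishes.

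The paper avoids roots altogether. Up to a constant, $\Lambda$ is the determinant of multiplication by $q[12]$ on the free $\C[x,y_1]$-module $\C[x,y_1,y_2]/(p[12])$. So $\ord_Z\Lambda$ equals the length of $\cO_{D^2(f)}$ at the generic points lying over $Z$. This length is $1$, because $D^2(f)$ is reduced by (MM) and a generic point of $Z$ has a unique partner. That norm argument is uniform in the multiplicities of the roots, and it needs no chart-dependent transversality computation.
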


\begin{proof}
Let $A=\C[x,y_1]=\cO_S$ and $B=\C[x,y_1,y_2]/(p[12])$. Since $p[12]$ has $y_2$-degree $\mu-1$ with constant leading coefficient, $B$ is a free $A$-module with basis $1,y_2,\dots,y_2^{\mu-2}$, and the resultant is the determinant of multiplication by $q[12]$ on this free module:
\[
\Lambda=\Res_{y_2}(p[12],q[12])=c^{\deg_{y_2}q[12]}\det\big(q[12]\cdot\,:B\to B\big)
\]
(evaluate at the companion matrix). For a height-one prime $\fp$ of the regular ring $A$ with discrete valuation ring $A_\fp$, an injective $A_\fp$-linear endomorphism $\varphi$ of a free module of finite rank satisfies $\ord_\fp(\det\varphi)=\length_{A_\fp}(\coker\varphi)$ (Smith normal form). Here $\coker(q[12]\cdot)=B/q[12]B=\C[x,y_1,y_2]/(p[12],q[12])=\cO_{D^2(f)}$, and multiplication by $q[12]$ is injective on $B\otimes A_\fp$ for every $\fp$ (otherwise $\Lambda=0$, but $V(\Lambda)=D\ne S$). Hence, for the prime $\fp_Z$ of an irreducible component $Z$ of $D$,
\[
\ord_Z(\Lambda)=\length_{A_{\fp_Z}}\big(\cO_{D^2(f)}\otimes_AA_{\fp_Z}\big)=\sum_{\widetilde Z\to Z}[\kappa(\widetilde Z):\kappa(Z)]\cdot\length\cO_{D^2(f),\widetilde Z},
\]
the sum over the irreducible components $\widetilde Z$ of $D^2(f)$ dominating $Z$ (all components of $D^2(f)$ have dimension $m-1$ and $\pi_1$ is finite, so each maps onto a component of $D$, and $D=\pi_1(D^2)$). $D^2(f)$ is reduced (MM), so each local ring at a generic point $\widetilde Z$ is a field and the lengths are $1$. Finally, by (MG)/(St), a generic point $s$ of $Z$ is an $A_0^2$ point (the points of $Z$ of type $A_0^{j\ge3}$ or $S_0$ form a subset of dimension $\le m-2$, because their images are the triple-point and cross-cap loci of dimension $\le m-2$ and $f$ is finite); over such $s$ the fibre of $\pi_1:D^2(f)\to D$ is the single reduced point $(s,s')$, $s'$ the unique partner. So exactly one component $\widetilde Z$ dominates $Z$, with $[\kappa(\widetilde Z):\kappa(Z)]=1$, and $\ord_Z(\Lambda)=1$. As $V(\Lambda)=D$ (Lemma~\ref{lem:Lambda}(c)) and $A$ is a UFD, $\Lambda$ is square-free and $(\Lambda)=I(D)$.
\end{proof}

\subsection{The ordered double-point surface is a normal domain (\texorpdfstring{$m=3$}{m3})}\label{sec:D2normal}

\begin{lemma}\label{lem:D2normal}
Let $m=3$. The ring $\cO_{D^2}=\C[x_1,x_2,y_1,y_2]/(p[12],q[12])$ is a complete intersection of dimension $2$, smooth off the origin, normal, and a \emph{domain}. Its ring of invariants $\cO_{D^2}^{S_2}$ under $y_1\leftrightarrow y_2$ is also a normal domain. Its complete-intersection structure and smoothness off the origin are established in Lemma~\ref{lem:normalisation}(a).
\end{lemma}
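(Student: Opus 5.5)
The statement takes the complete-intersection structure and smoothness off the origin from Lemma~\ref{lem:normalisation}(a). What remains is normality, the domain property, and the corresponding facts for the invariant ring. Granting the first two, $\cO_{D^2}$ is Cohen--Macaulay by (D4), so Serre's condition $S_2$ holds automatically. Since $\dim\cO_{D^2}=2$ and the singular locus is contained in the vertex, which has codimension $2$, condition $R_1$ also holds. Serre's criterion therefore shows that $\cO_{D^2}$ is normal. A normal ring is a finite product of normal domains, one for each connected component of $\Spec\cO_{D^2}$. The ring is positively graded with degree-$0$ part $\C$, so $\Spec\cO_{D^2}$ is a cone: every irreducible component is $\C^*$-invariant and contains the vertex. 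Hence the spectrum is connected and $\cO_{D^2}$ is a domain.

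For the invariant ring, I would use that $S_2$ is finite and we are in characteristic zero. The Reynolds operator $\tfrac12(1+\sigma)$ makes $\cO_{D^2}^{S_2}$ a direct summand of $\cO_{D^2}$. A subring of a domain is a domain. For integral closedness, take an element of the fraction field of the invariant ring that is integral over it. It is integral over $\cO_{D^2}$, so it lies in $\cO_{D^2}$ by normality. Being $\sigma$-fixed, it lies in $\cO_{D^2}^{S_2}$. Finally, $\cO_{D^2}^{S_2}$ is finitely generated (Noether), positively graded, with $\C$ in degree zero, so it is a ring of the type considered in \S\ref{sec:comparison}.

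The only step needing care is the claim that each component passes through the vertex, so that normality implies domain. This follows from positivity of the weights: every $\C^*$-orbit closure contains the origin, and each irreducible component of a $\C^*$-stable closed set is itself $\C^*$-stable. A possible obstacle is whether the equations cut out a scheme of the expected dimension $2$ everywhere, rather than only near the origin. Globally on the affine cone this is supplied by (MM) combined with $\C^*$-invariance, as recorded in Lemma~\ref{lem:normalisation}(a). So beyond that lemma the argument is formal.
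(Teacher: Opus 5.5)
Your argument is correct and follows the paper's proof. Normality comes from Serre's criterion, with $R_1$ from the isolated singularity and $S_2$ from Cohen--Macaulayness. Irreducibility holds because every $\C^*$-stable component passes through the vertex. Integral closedness of the invariant ring follows from the fixed-point argument.

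The only cosmetic difference is how you get irreducibility. The paper concludes it from the normal local ring at the vertex being a domain, whereas you use the decomposition of a normal ring into a product over its connected components. Also, the complete-intersection and smoothness-off-the-origin facts for $D^2(f)$ itself come directly from (MM), since $D^2(f)$ is an ICIS surface. They do not come from Lemma~\ref{lem:normalisation}(a), which concerns $\tD$ and in fact cites the present lemma.
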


\begin{proof}
$D^2(f)$ is an ICIS surface (MM), so $\cO_{D^2}$ is CM and its localisations at points $\ne0$ are regular. Serre's criterion ($R_1$ from the isolated singularity, $S_2$ from CM) gives normality. Normality alone does not give irreducibility, but the grading does: every irreducible component of $D^2$ is invariant under the connected group $\C^*$ acting with positive weights, hence contains the vertex $0$. The local ring of $D^2$ at $0$ is normal, hence a domain, so only one component passes through $0$. Therefore $D^2$ is irreducible, and being reduced, $\cO_{D^2}$ is a domain. The ring of invariants of a normal domain under a finite group is a normal domain (it is a subring of a domain, and integrally closed in its fraction field $\C(D^2)^{S_2}$).
\end{proof}

\subsection{The triple-point function \texorpdfstring{$\Lambda_3$}{Lambda3} (\texorpdfstring{$m=3$}{m3})}\label{sec:triple}

Let $p[123],q[123]$ be the second divided differences (symmetric in $y_1,y_2,y_3$). For $\mu\ge3$, $\deg_{y_3}p[123]=\mu-2$ with constant leading coefficient, and we define
\[
\Lambda_3:=\Res_{y_3}\big(p[123],q[123]\big)\in\C[x,y_1,y_2]^{S_2},\qquad\deg\Lambda_3=(\mu-2)(k-2v),\qquad \Lambda_3=c''\prod_{j\ge3}q[12j]
\]
over the splitting field, with $y_3,\dots,y_\mu$ the roots of $p(x,\cdot)=p(x,y_1)$ other than $y_1,y_2$. On $D^2(f)$ (where $p(x,y_2)=p(x,y_1)$) these are the remaining preimages. $\Lambda_3$ defines a function on $D^2(f)$ and, being symmetric, a function $\bL$ on $\tD=D^2/S_2$ (\S\ref{sec:normalisation}).

\begin{convention}[empty triple locus]\label{conv:empty}
If $\mu=2$ then $p[123]$ is the nonzero constant $c$, $D^3(f)=\emptyset$, and we \emph{define} $\Lambda_3:=1$ (the determinant of the endomorphism of the zero module, in the norm interpretation of the proof below. This avoids implementation-dependent conventions for resultants involving a constant). If $\mu\ge3$ and $V(\Lambda_3)\cap D^2(f)\subseteq\{0\}$, then $\bL$ is a homogeneous function on $\tD$ vanishing at most at the vertex, hence (positive grading) a nonzero \emph{constant}, and $D^3(f)=\emptyset$. In both cases $\deg\Lambda_3=0$, $(\bL)$ is the unit ideal, and the ``triple curve'' $\tT$ below is empty. \emph{Example}: $f=(x_1,x_2,y^4+x_1y,y^2)$ with weights $(a_1,a_2,v)=(3,1,1)$ and degrees $(3,1,4,2)$; subtracting the square of the last coordinate from the third gives a cross-cap times a line, so $f$ is stable, $\mu=4$ in the normalised form (Remark~\ref{rem:normalisation}: here $p=y^4+x_1y$, $q=y^2$) and $q[123]=1$, $D^3=\emptyset$, $\Lambda_3=1$. (Here $q(0,y)=y^2\ne0$ as well, so the exchanged normal form, with $(\mu',k')=(2,4)$, is also available; cf.\ Remark~\ref{rem:symmetry}.) For $\mu\ge3$, $\Ae$-finiteness forces $k\ge2v$: if $k<2v$ then $q[123]$, of degree $k-2v<0$, vanishes identically and
\[
\cO_{D^3(f)}=\cO_{D^2(f)}[y_3]/(p[123]).
\]
Since $p[123]$ has $y_3$-degree $\mu-2$ and nonzero constant leading coefficient, this is a free module of rank $\mu-2$ over $\cO_{D^2(f)}$. Thus $D^3(f)\to D^2(f)$ is finite and surjective, giving $\dim D^3(f)=2$, contrary to (MM). If $k=2v$ then $q[123]$ is a constant, which is nonzero for the same reason, and we are in the empty case. Conversely, if $k>2v$ then $\deg\bL=(\mu-2)(k-2v)>0$, and a homogeneous function of positive degree on the two-dimensional cone $\tD$ vanishes on a nonempty curve, so the triple curve is nonempty. Hence for $\mu\ge3$ the empty case occurs exactly when $k=2v$, i.e.\ exactly when $\deg\Lambda_3=0$.
\end{convention}

\begin{lemma}[$\Lambda_3$ is a reduced equation of the triple curve]\label{lem:Lambda3}
Let $m=3$. Off the origin, $V(\Lambda_3)\cap D^2(f)$ is exactly the set $\tT$ of pairs $(x,y_1,y_2)\in D^2$ admitting a third preimage $y_3\notin\{y_1,y_2\}$ (the preimage of the triple-point curve $T_X\subset X$); $\tT$ is a reduced curve or empty, and $\Lambda_3$ vanishes to first order along each of its components: $(\Lambda_3)=I(\tT)$ in the normal domain $\cO_{D^2}$, and $(\bL)=I(\tT_X)$ in $\cO_{\tD}$, where $\tT_X=\tT/S_2$. In the empty case both ideals are the unit ideal.
\end{lemma}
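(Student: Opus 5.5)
We follow the proof of Lemma~\ref{lem:Lambda-reduced} one dimension down, with $D^2(f)$ in the role of the source and $D^3(f)$ in the role of $D^2(f)$. The empty case ($\mu=2$, or $k=2v$) is Convention~\ref{conv:empty}, so assume $\mu\ge3$ and $k>2v$.

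\emph{Setup.} Put $A=\cO_{D^2}$, a normal domain by Lemma~\ref{lem:D2normal}, and
\[
B=A[y_3]/(p[123]).
\]
Since $p[123]$ has $y_3$-degree $\mu-2$ with constant leading coefficient, $B$ is free over $A$ with basis $1,y_3,\dots,y_3^{\mu-3}$. As in Lemma~\ref{lem:Lambda-reduced}, $\Lambda_3$ equals, up to a nonzero constant, the determinant of multiplication by $q[123]$ on $B$. The cokernel of this multiplication is $B/q[123]B=\cO_{D^3(f)}$, because $D^3(f)$ is cut out in $D^2\times\C_{y_3}$ by $p[123]$ and $q[123]$.

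\emph{Set-theoretic description.} Off the origin, a point of $D^2$ lies in $V(\Lambda_3)$ iff $p[123]$ and $q[123]$ have a common root $y_3$. Such a root gives a point of $D^3(f)$. If $y_3\notin\{y_1,y_2\}$ and $y_1\ne y_2$, this is a genuine triple point.

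We must exclude the degenerate coincidences $y_3=y_1$ or $y_1=y_2$ off the origin. These give points of the confluent scheme $D^3(f,(2,1))$, which by (MM) is supported at $0$ since $e=-1$. The one remaining case $y_1=y_2=y_3$ lies in $D^3(f,(3))$, which has $e=-2$ and so is also at $0$. Hence off the origin $V(\Lambda_3)\cap D^2=\tT$.

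Since $D^3(f)$ is empty or an ICIS curve (MM), and the projection $D^3\to D^2$ is finite, $\tT$ is a curve or empty.

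\emph{Reducedness.} Let $\fp$ be a height-one prime of $A$ containing $\Lambda_3$, that is, a component $Z$ of $\tT$. The ring $A_\fp$ is a DVR because $A$ is normal. Multiplication by $q[123]$ is injective on $B_\fp$, since $\Lambda_3\ne0$ by the degree count in the Convention. Smith normal form then gives
\[
\ord_\fp\Lambda_3=\length_{A_\fp}\bigl(\cO_{D^3}\otimes A_\fp\bigr)=\sum_{\widetilde Z\to Z}[\kappa(\widetilde Z):\kappa(Z)]\cdot\length\cO_{D^3,\widetilde Z},
\]
summed over the components $\widetilde Z$ of $D^3$ dominating $Z$. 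Each length is $1$ because $D^3$ is reduced (MM).

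It remains to show that exactly one component dominates $Z$, with degree one. By (St) and (MG), only $A_0^3$ occurs at a generic point of the triple locus; $A_0^4$ is excluded off $0$ because $D^4(f)$ is supported at the origin. So over a generic pair $(y_1,y_2)\in Z$ there is exactly one third preimage $y_3$, and it is a simple point of the fibre. Hence $\ord_\fp\Lambda_3=1$.

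Since $A$ is a normal domain, (D4) shows that $(\Lambda_3)$ is unmixed of height one. All its orders are $1$, so $(\Lambda_3)=I(\tT)$. Note that no primary component can sit at the origin, since $(\Lambda_3)$ is unmixed.

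\emph{Descent to $\tD$.} Let $A^{S_2}=\cO_{\tD}$, and let $\fq$ be a height-one prime of $A^{S_2}$ containing $\bL$. Take $\fp$ a prime of $A$ over $\fq$; then $\ord_\fq\bL\cdot e(\fp/\fq)=\ord_\fp\Lambda_3=1$. So $\ord_\fq\bL=1$, and the ramification index is $1$ along $\tT$. This holds because the involution $y_1\leftrightarrow y_2$ fixes only the diagonal, which meets $\tT$ only at $0$ by the previous step.

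Normality of $\cO_{\tD}$ (Lemma~\ref{lem:D2normal}) and (D4) then give $(\bL)=I(\tT_X)$.

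\emph{Main obstacle.} The delicate step is the generic-fibre claim: that the generic fibre of $D^3\to\tT$ is a single reduced point. This must be justified using only stability off $0$, that is, the $A_0^3$ normal form in which three transverse smooth sheets meet. To do this we check that $D^3$ is smooth, hence reduced, at the corresponding point, and that the projection to $D^2$ is a local isomorphism there.
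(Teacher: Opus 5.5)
Your proposal is correct and is essentially the paper's proof: the determinant argument for $q[123]$ on $\cO_{D^2}[y_3]/(p[123])$ over the normal domain $\cO_{D^2}$, reducedness of $D^3$ together with the uniqueness of the third preimage at $A_0^3$ points, unmixedness via (D4), and descent to $\cO_{\tD}$. Your identity $e\cdot\ord_{\mathfrak q}\bL=\ord_\fp\Lambda_3=1$ forces both factors to equal $1$, so it replaces the paper's unramified-double-cover remark; likewise your ``main obstacle'' smoothness check is not needed, since reducedness of $D^3$ comes from (MM) and birationality from generic injectivity in characteristic zero.

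Two small slips need fixing. First, $e(3,(2,1))=0$ and $e(3,(3))=-1$, not $-1$ and $-2$. So $D^3(f,(2,1))$ lies at the origin because it is a zero-dimensional cone, not because its expected dimension is negative. Second, $\Lambda_3\ne0$ in $\cO_{D^2}$ does not follow from a degree count. It follows from your own set-theoretic step: off $0$, the zero set of $\Lambda_3$ is only the curve $\tT$ inside the irreducible surface $D^2$.
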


\begin{proof}
The empty case is covered by Convention~\ref{conv:empty}. Assume $\mu\ge3$ and $\tT\ne\emptyset$. On $D^2\setminus\{0\}$, $\Lambda_3(x,y_1,y_2)=0$ iff $p[123](x,y_1,y_2,\cdot)$ and $q[123](x,y_1,y_2,\cdot)$ have a common root $y_3$. If $y_3\notin\{y_1,y_2\}$ then $(x,y_1,y_2,y_3)\in D^3(f)$ is a triple point. If $y_3=y_1$ (say) then $(x,y_1,y_1,y_2)\in D^3(f,(2,1))$, which has expected dimension $0$ (MM), hence is at the origin. So off the origin $V(\Lambda_3)\cap D^2=\pi_{12}(D^3)=\tT$, a curve (image of the ICIS curve $D^3$ under the finite map $\pi_{12}$). For the order of vanishing we repeat the norm argument of Lemma~\ref{lem:Lambda-reduced} one level up: $B'=\cO_{D^2}[y_3]/(p[123])$ is free of rank $\mu-2$ over $A'=\cO_{D^2}$ (constant leading coefficient), $\Lambda_3|_{D^2}=c\det(q[123]\cdot:B'\to B')$, and $B'/q[123]B'=\cO_{D^3(f)}$. $\cO_{D^2}$ is a normal domain by Lemma~\ref{lem:D2normal}, which uses only (MM) and the grading, so there is no circularity. For the height-one prime of a component $\widetilde Z$ of $\tT$ the local ring is a DVR and $\ord_{\widetilde Z}(\Lambda_3)=\sum_{W\to\widetilde Z}[\kappa(W):\kappa(\widetilde Z)]\length\cO_{D^3,W}$ over the components $W$ of the reduced curve $D^3$ dominating $\widetilde Z$. At a generic point of $\widetilde Z$ the multigerm is of type $A_0^3$ (the $A_0^4$ and $S_0A_0$ points lie at the origin by (St), and $\tT$ has no component inside the fixed curve $\{y_1=y_2\}$, since $\tT\cap\{y_1=y_2\}$ lies over $D^3(f,(2,1))$, which is at the origin), so the third preimage $y_3$ is unique: exactly one component $W$ dominates $\widetilde Z$, birationally, and $\ord_{\widetilde Z}(\Lambda_3)=1$. In the normal domain $\cO_{D^2}$ the principal ideal $(\Lambda_3)$ is unmixed of height one \ref{D4}, hence equals the intersection of the height-one primes of the components of $\tT$: $(\Lambda_3)=I(\tT)$ is reduced. Passing to $S_2$-invariants: $D^2\to\tD$ is an unramified double cover near $\tT\setminus0$ (the fixed curve meets $\tT$ only at $0$, as just shown), $\bL$ is the invariant function with pullback $\Lambda_3$, so $\ord=1$ along each component of $\tT_X$, and the same unmixedness argument in the normal domain $\cO_{\tD}$ (Lemma~\ref{lem:D2normal}) gives $(\bL)=I(\tT_X)$.
\end{proof}

\section{Local models}\label{sec:local}

\subsection{What has to be computed locally}\label{sec:local0}

All modules in this paper are canonically attached to the germ $f$ (or to its image $X$ with its normalisation $f:S\to X$) and are preserved by $\cA$-equivalence of multigerms: $L=\{\eta\in\theta_S:\eta(\cO_X)\subseteq\cO_X\}$, the conductor $\fc=\Ann_{\cO_X}(\cO_S/\cO_X)$, $\cO_{D_X}=\cO_X/\fc$, $\Der(\cO_{D_X})$, the cross-cap curve $C_X=f(\Sigma)$ and its reduced ideal, the normalisation $\cO_{\tD}$ of $\cO_{D_X}$ and its conductor. By Lemma~\ref{lem:comparison} and (MG)/(St), any statement ``$M=N$ for two such modules'' holds globally on the punctured cone as soon as it holds in the analytic local rings at the points of $X\setminus\{0\}$, where the multigerm is $\cA$-equivalent to one of the normal forms $A_0^j$ ($j\le m$), $S_0$. This section computes everything needed in those normal forms.

Notation: $w$ denotes the coordinates transverse to the ones displayed (the trivial factor $\C^{r}$); all rings are analytic local rings at the point in question, but since the normal forms are polynomial and the computations are of kernels/annihilators, they may equally be done in polynomial rings localised at the point.

\subsection{Normal crossings: \texorpdfstring{$X=\{u_1u_2\cdots u_j=0\}\subset\C^{n+1}$, $j\le n+1$}{ }}\label{sec:nc}

Let $\cO=\C\{u_1,\dots,u_j,w\}$ ($w=(w_1,\dots,w_{n+1-j})$), $X=V(u_1\cdots u_j)$, $H_i=V(u_i)$, $\cO_X=\cO/(u_1\cdots u_j)$. The normalisation of $X$ is $\widetilde X=\bigsqcup_iH_i$, $\cO_{\widetilde X}=\prod_i\cO_{H_i}$, $\cO_{H_i}=\cO/(u_i)$, and this is the multigerm $f$ at an $A_0^j$ point (the $j$ branches are the inclusions $H_i\hookrightarrow\C^{n+1}$).

\begin{proposition}\label{prop:nc}
\begin{enumerate}[label=\textup{(\alph*)}]
\item \textup{(Seminormality.)} $\cO_X=\big\{(h_1,\dots,h_j)\in\prod_i\cO_{H_i}:\ h_i|_{H_i\cap H_l}=h_l|_{H_i\cap H_l}$ for all $i<l\big\}$.
\item \textup{(Conductor.)} The conductor of $\cO_X\subset\cO_{\widetilde X}$ is $\fc=\prod_i\fc_i$ with $\fc_i=\big(\prod_{l\ne i}u_l\big)\cO_{H_i}$, the reduced ideal in $H_i$ of $H_i\cap\bigcup_{l\ne i}H_l$. Thus $\fc$ is the reduced ideal of the double locus $\widetilde D=\bigsqcup_i(H_i\cap\bigcup_{l\ne i}H_l)$ of the normalisation, and $\cO_X/\fc=\cO_{D_X}$ with $D_X=\bigcup_{i<l}(H_i\cap H_l)$ \emph{reduced}.
\item \textup{(Logarithmic fields.)} $\Dlog X$ is free on $u_1\partial_{u_1},\dots,u_j\partial_{u_j},\partial_{w_1},\dots,\partial_{w_{n+1-j}}$.
\item \textup{($j=2$.)} $D_X=V(u_1,u_2)$ is smooth, $\Der(\cO_{D_X})=\bigoplus_r\cO_{D_X}\partial_{w_r}$, and the restriction map $\Dlog X\to\Der(\cO_{D_X})$ is surjective.
\item \textup{($j=3$.)} $\cO_{D_X}=\cO/(u_1u_2,u_1u_3,u_2u_3)$, the ring of three coordinate planes $\Pi_i=V(u_l,u_{l'})$ ($\{i,l,l'\}=\{1,2,3\}$) in $\C^3_{u}\times\C^{n-2}_w$ meeting along $T=V(u_1,u_2,u_3)$. Its normalisation is $\bigsqcup_i\Pi_i$, $\cO_{\Pi_i}=\C\{u_i,w\}$, with conductor $\prod_i(u_i)\cO_{\Pi_i}$, the reduced ideal of $T$ in each plane. Moreover
\begin{multline*}
\Der(\cO_{D_X})=\Big\{\delta:\ \delta(u_i)\in u_i\,\C\{u_i,w\}\ (i=1,2,3),\ \delta(w_r)\in\cO_{D_X}\Big\}\\
=\bigoplus_i u_i\C\{u_i,w\}\partial_{u_i}\ \oplus\ \bigoplus_r\cO_{D_X}\partial_{w_r},
\end{multline*}
and the restriction map $\Dlog X\to\Der(\cO_{D_X})$ is surjective.
\end{enumerate}
\end{proposition}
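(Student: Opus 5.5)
Everything reduces to explicit computations with monomials in the power series ring $\cO=\C\{u_1,\dots,u_j,w\}$. The main tool is the square-free monomial ideal calculus: an element lies in an intersection of ideals generated by variables iff it lies in their product when they involve disjoint variables. I will prove (a)--(e) in order, each part using the previous ones.

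For (a), the map $\cO_X\to\prod_i\cO_{H_i}$ is injective because $(u_1\cdots u_j)=\bigcap_i(u_i)$; this holds since the $u_i$ are distinct primes in a UFD. Its image clearly satisfies the compatibility conditions. For the converse I will induct on $j$. Given compatible $(h_1,\dots,h_j)$, lift $h_1,\dots,h_{j-1}$ by the inductive hypothesis to some $g\in\cO/(u_1\cdots u_{j-1})$, choose a representative $G\in\cO$, and correct by $u_1\cdots u_{j-1}\cdot r$. The correction has to solve $u_1\cdots u_{j-1}r\equiv h_j-G \pmod{u_j}$ in $\cO_{H_j}=\C\{u_1,\dots,\widehat{u_j},\dots,w\}$. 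Compatibility says that $h_j-G$ vanishes on each $H_l\cap H_j$, i.e.\ it is divisible by each $u_l$ in the UFD $\cO_{H_j}$, hence by their product. This is the one genuine step.

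For (b), I take $(c_i)\in\fc$ iff $c\cdot\cO_{\widetilde X}\subseteq\cO_X$. Testing against the idempotents $e_i$ gives the condition $c_ie_i\in\cO_X$. By (a), this means $c_i$ vanishes on every $H_i\cap H_l$, i.e.\ $c_i\in(\prod_{l\ne i}u_l)\cO_{H_i}$, and conversely every such $c_i$ works. The ideal $\fc_i$ is generated by a square-free product, so it is reduced. The image $\fc\cap\cO_X$ equals the ideal of $\bigcup_{i<l}H_i\cap H_l$, by the same divisibility argument. So $\cO_X/\fc$ is the reduced ring of $D_X$, as claimed.

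For (c), I apply Saito's criterion (S). The listed fields are logarithmic, and their determinant is $u_1\cdots u_j$, which is the reduced equation of $X$, so they form a free basis.

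For (d), $D_X=V(u_1,u_2)\cong\C\{w\}$ is smooth. Its derivations are therefore $\bigoplus\cO_{D_X}\partial_{w_r}$, and each $\partial_{w_r}\in\Dlog X$ restricts to the corresponding basis element, so restriction is surjective.

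Part (e) is the main obstacle, and I will handle it concretely.
\begin{itemize}
\item The ideal of $D_X$ is $(u_1u_2,u_1u_3,u_2u_3)=\bigcap_{i<l}(u_i,u_l)$, which is again a square-free monomial computation.
\item The normalisation and its conductor follow by the argument of (a)/(b) applied to three planes meeting in $T$. Seminormality here says that $(h_i)\in\prod\C\{u_i,w\}$ lies in $\cO_{D_X}$ iff all $h_i|_{u_i=0}$ coincide.
\item For the derivations, any $\delta\in\Der(\cO_{D_X})$ extends to the normalisation by the analytic analogue of Seidenberg: the fields are determined componentwise on each reduced plane $\Pi_i$, and preserve each minimal prime because a derivation of a reduced ring preserves its minimal primes. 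Preserving $I(\Pi_i)=(u_l,u_{l'})$ means $\delta$ is tangent to each plane. So on $\Pi_i$ we get a derivation of $\C\{u_i,w\}$, and it must preserve the conductor, i.e.\ $\delta(u_i)\in u_i\C\{u_i,w\}$; this is the first description. Conversely, such data glue because $u_i\C\{u_i,w\}\subset\cO_{D_X}$: the element vanishes on the other planes and at $T$.
\item The direct-sum decomposition is then read off.
\item Surjectivity of restriction follows because $u_i h(u_i,w)\partial_{u_i}$ is the restriction of $h(u_i,w)\,u_i\partial_{u_i}\in\Dlog X$, and $\partial_{w_r}$ restricts directly.
\end{itemize}
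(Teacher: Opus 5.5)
Your proof is correct. For (a), (b) and (d) it is the paper's argument: your induction in (a) is the paper's iterative correction $\tilde h_l=\tilde h_{l-1}+u_1\cdots u_{l-1}g_l$, and (b) is the same test against idempotents. In (c) you use Saito's criterion, which the paper mentions as an alternative to its direct check that $\sum_iA_i\prod_{l\ne i}u_l\in(u_1\cdots u_j)$ forces $u_i\mid A_i$.

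The one genuine difference is the derivation computation in (e).

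\textbf{Your route.} A derivation of the reduced noetherian $\Q$-algebra $\cO_{D_X}$ preserves its minimal primes $(u_l,u_{l'})$. It therefore acts componentwise on $\prod_i\C\{u_i,w\}$, and this extension preserves the conductor.

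\textbf{The paper's route.} It restricts the relation $u_i\delta(u_l)+u_l\delta(u_i)=0$ to $\Pi_i$, which gives $\delta(u_l)|_{\Pi_i}=0$ for $i\ne l$. So $\delta(u_l)\in\cO_{D_X}$ is supported on $\Pi_l$. Since components of elements of $\cO_{D_X}$ agree on $T$, it lies in $u_l\C\{u_l,w\}$.

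\textbf{Comparison.} The paper's version is fully elementary. It is chosen deliberately so that no reduced-ring form of Seidenberg's theorem is needed (\S\ref{sec:seidenberg}). Yours is conceptually cleaner. However, it quotes the minimal-prime fact, and it uses conductor preservation for a non-domain without justification. That justification is short: $\delta(h)g=\delta(hg)-h\delta(g)\in\cO_{D_X}$ for $h\in\fc$. You could also skip the conductor altogether, since ``supported on $\Pi_i$ and lying in $\cO_{D_X}$'' already forces vanishing on $T$.

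\textbf{The converse in (e).} Your ``such data glue'' step is thinner than it should be. What must be checked is that the prescribed values respect the relations, i.e.\ $u_ig_l+u_lg_i=0$ in $\cO_{D_X}$. This holds because $u_i$ vanishes on $\Pi_l$, the support of $g_l$, for $i\ne l$. The inclusion $u_i\C\{u_i,w\}\subset\cO_{D_X}$ does not by itself give this.
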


\begin{proof}
(a) The sequence $0\to\cO_X\to\prod_i\cO_{H_i}\to\prod_{i<l}\cO_{H_i\cap H_l}$ (differences of restrictions) is exact: injectivity because $X$ is reduced and the $H_i$ cover it. For exactness in the middle, given $(h_i)$ agreeing on pairwise intersections, lift $h_1$ to $\tilde h_1\in\cO$. Then $h_2-\tilde h_1|_{H_2}$ vanishes on $H_1\cap H_2$, so equals $u_1g_2$ on $H_2$; put $\tilde h_2=\tilde h_1+u_1g_2$, which restricts to $h_1$ on $H_1$ and to $h_2$ on $H_2$. Then $h_3-\tilde h_2|_{H_3}$ vanishes on $H_3\cap(H_1\cup H_2)=V(u_1u_2)\cap H_3$, whose reduced ideal in $\cO_{H_3}$ is $(u_1u_2)$ (a normal crossing in $H_3$), so $h_3-\tilde h_2|_{H_3}=u_1u_2g_3$ and $\tilde h_3=\tilde h_2+u_1u_2g_3$ works; continue.

(b) $(h_i)\in\fc$ iff $(h_ig_i)\in\cO_X$ for all $(g_i)$; taking $g_l=\delta_{il}$ gives $h_i|_{H_i\cap H_l}=0$ for all $l\ne i$, i.e.\ $h_i\in(\prod_{l\ne i}u_l)\cO_{H_i}$ (reduced ideal of a normal crossing in $H_i$). Conversely such $(h_i)$ satisfy $h_ig_i|_{H_i\cap H_l}=0=h_lg_l|_{H_i\cap H_l}$. The quotient $\prod\cO_{H_i}/\fc$ is $\prod_i\cO_{H_i}/(\prod_{l\ne i}u_l)$, reduced; $\cO_X/\fc\subseteq$ it is reduced, and its zero set is $\bigcup_{i<l}H_i\cap H_l$.

(c) $\xi=\sum A_i\partial_{u_i}+\sum B_r\partial_{w_r}$ satisfies $\xi(u_1\cdots u_j)\in(u_1\cdots u_j)$ iff $\sum_iA_i\prod_{l\ne i}u_l\in(u_1\cdots u_j)$ iff $u_i\mid A_i$ for each $i$ (reduce modulo $u_i$: the only surviving term is $A_i\prod_{l\ne i}u_l$, and $\prod_{l\ne i}u_l$ is a nonzerodivisor mod $u_i$). (This is also an instance of (S): the determinant of the displayed fields is $u_1\cdots u_j$.)

(d) Clear from (c): $\partial_{w_r}$ restrict to the coordinate fields of the smooth $D_X$.

(e) The three planes $\Pi_i=V(u_l,u_{l'})$ meet pairwise exactly along $T=V(u_1,u_2,u_3)$, and $D_X=\Pi_1\cup\Pi_2\cup\Pi_3$ is a normal crossing of planes in $\C^3_u\times\C^{n-2}_w$. The same argument as in (a),(b) (with the pairwise intersections all equal to $T$) gives $\cO_{D_X}=\{(h_i)\in\prod_i\C\{u_i,w\}:h_1|_T=h_2|_T=h_3|_T\}$ and conductor $\prod_i(u_i)\C\{u_i,w\}$, the reduced ideal of $T$ in each plane. Derivations: $\delta\in\Der(\cO_{D_X})$ is determined by $\delta(u_1),\delta(u_2),\delta(u_3),\delta(w_r)\in\cO_{D_X}\subseteq\prod_i\C\{u_i,w\}$ subject to $\delta(u_iu_l)=u_i\delta(u_l)+u_l\delta(u_i)=0$ for $i\ne l$. Restrict this relation to the plane $\Pi_i$, on which $u_l=0$ and $u_i$ is a coordinate: it reads $u_i\cdot\delta(u_l)|_{\Pi_i}=0$, so $\delta(u_l)|_{\Pi_i}=0$ for all $l\ne i$. Hence $\delta(u_l)$ is supported on $\Pi_l$: $\delta(u_l)=(0,\dots,g_l,\dots,0)$ with $g_l\in\C\{u_l,w\}$; and since the components of an element of $\cO_{D_X}$ agree on $T$, $g_l|_T=0$, i.e.\ $g_l\in u_l\C\{u_l,w\}$. $\delta(w_r)\in\cO_{D_X}$ is unconstrained. Conversely any such data respect the relations $u_iu_l=0$ and define a derivation. This is the displayed description. Surjectivity from $\Dlog X$: the fields $u_i\partial_{u_i}$, $\partial_{w_r}$ of (c) induce the derivations with $\delta(u_i)=(0,\dots,u_i,\dots,0)$, $\delta(w_r)=1$; given $\delta$ with $g_l=u_l\gamma_l(u_l,w)$, the function $\Gamma_l:=\gamma_l(u_l,w)\in\cO$ restricts to an element of $\cO_{D_X}$ and $\sum_l\Gamma_l\cdot(u_l\partial_{u_l})+\sum_r\delta(w_r)\partial_{w_r}$ induces $\delta$ (on $\Pi_i$, $i\ne l$, both $\Gamma_lu_l$ and $g_l$ vanish).
\end{proof}

\subsection{The cross-cap}\label{sec:crosscap}

We use the model $S_0$ given by $f_0(x_1,x_2,y)=(x_1,y^2,x_1y,x_2)$.

(For $m=2$ drop $x_2$; but recall that for $m=2$ cross-caps occur only at the origin, so this model is needed for $m=3$ only.) Target coordinates $(X_1,Y,Z,w)$ with $w=x_2$; $X=V(G)$, $G=Z^2-X_1^2Y$ (the Whitney umbrella times $\C_w$).

\begin{proposition}\label{prop:crosscap}
\begin{enumerate}[label=\textup{(\alph*)}]
\item $\cO_X=\C\{x_1,y^2,x_1y,w\}=\{h\in\C\{x_1,y,w\}:\ h(0,y,w)=h(0,-y,w)\}$, i.e.\ $h\in\cO_X$ iff $h(s)=h(s')$ for all double point pairs $s=(0,y,w)$, $s'=(0,-y,w)$. Equivalently $h[12]\in(y_1+y_2,\,x_1)=I_{D^2}$.
\item The conductor of $\cO_X\subset\cO_S$ is $\fc=x_1\cO_S$, the reduced ideal of the double locus $D=V(x_1)$ of the source; $\Lambda=q[12]=x_1$ (up to a unit). As an ideal of $\cO_X$, $\fc=(X_1,Z)\cO_X$ (\emph{not} $(X_1)\cO_X$: $\cO_X/(X_1)=\C\{Y,Z,w\}/(Z^2)$ contains the nonzero nilpotent $Z$). Hence
\[
\cO_{D_X}=\cO_X/\fc=\C\{X_1,Y,Z,w\}/(G,X_1,Z)=\C\{Y,w\},
\]
so $D_X=V(X_1,Z)$ is a smooth plane, and the cross-cap curve is $C_X=V(X_1,Y,Z)=\{Y=0\}\subset D_X$.
\item $\Dlog X$ is generated by
\[
\xi_1=X_1\partial_{X_1}+Z\partial_Z,\quad \xi_2=-X_1\partial_{X_1}+2Y\partial_Y,\quad \xi_3=2Z\partial_Y+X_1^2\partial_Z,\quad \xi_4=Z\partial_{X_1}+X_1Y\partial_Z,\quad \partial_w ,
\]
where $\xi_1(G)=2G$ and $\xi_2(G)=\xi_3(G)=\xi_4(G)=\partial_w(G)=0$. (The field $\xi_1$ is \emph{not} a multiple of the weighted Euler field $X_1\partial_{X_1}+2Y\partial_Y+2Z\partial_Z$. Only the relation $\xi_1(G)=2G$ is used.)
\item The image of $\Dlog X\to\Der(\cO_{D_X})=\cO_{D_X}\partial_Y\oplus\cO_{D_X}\partial_w$ is $\cO_{D_X}\,Y\partial_Y\oplus\cO_{D_X}\partial_w=\Der(\cO_{D_X};C_X)$, the derivations of $D_X$ preserving the ideal $(Y)$ of $C_X$. In particular every logarithmic field is tangent to $C_X$, and every derivation of $\cO_{D_X}$ tangent to $C_X$ lifts to a logarithmic field.
\item $D^2(f_0)=V(y_1+y_2,x_1)\cong\C^2_{y_1,w}$ is smooth, $S_2$ acts by $y_1\mapsto-y_1$, $D^2/S_2\cong\C^2_{y_1^2,w}\cong D_X$ (via $Y=y_1^2$), so $\tD\to D_X$ is an isomorphism and the branch curve of $D^2\to D^2/S_2$ is $C_X$.
\end{enumerate}
\end{proposition}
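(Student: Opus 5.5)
The plan is to verify (a)--(e) for the explicit model by direct computation, treating (a) and (c) as the computational core and deriving (b), (d), (e) from them.

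For (a), I will decompose $h\in\C\{x_1,y,w\}$ into even and odd parts in $y$, writing $h=h_0(x_1,y^2,w)+y\,h_1(x_1,y^2,w)$. Then $y h_1\in\C\{x_1,y^2,x_1y,w\}$ exactly when $h_1(0,y^2,w)=0$, that is, when $h_1=x_1h_1'$, because $x_1y\cdot h_1'$ lies in the subring. So $h\in\cO_X$ iff $h_1(0,\cdot,\cdot)=0$, which is equivalent to $h(0,y,w)=h(0,-y,w)$. For the divided-difference form, note that $D^2(f_0)$ is cut out by $p[12]=y_1+y_2$ and $q[12]=x_1$, so $I_{D^2}=(y_1+y_2,x_1)$. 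Then $h[12]\in I_{D^2}$ iff $h[12]$ vanishes on $\{x_1=0,y_2=-y_1\}$, which again says $h(0,y,w)-h(0,-y,w)=0$ off $y=0$; at $y_1=y_2=0$ this holds by continuity, since the ideal is prime.

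For (b), $x_1\cO_S\subseteq\fc$: indeed $x_1g$ is even in $y$ on $x_1=0$ (it is $0$ there), and $\fc$ is an ideal, so $x_1g\cdot h$ also lies in $\cO_X$. Conversely, if $h\in\fc$, then $hy\in\cO_X$ and $h\in\cO_X$ force $h(0,y,w)$ to be both even and odd, so $h(0,y,w)=0$ and $x_1\mid h$. Within $\cO_X$, we have $x_1\cO_S=x_1\cO_X+x_1y\cO_X=(X_1,Z)\cO_X$ by the even/odd decomposition. The quotient $\C\{X_1,Y,Z,w\}/(G,X_1,Z)$ is $\C\{Y,w\}$, since $G\in(X_1,Z)$. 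The formula $\Lambda=x_1$ up to a unit is the resultant computation with $p[12]$ linear in $y_2$. Finally, $C_X=f_0(\Sigma)$ with $\Sigma=V(x_1,y)$ gives $Y=0$.

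For (c), I will first check the identities $\xi_i(G)$ by substitution. Generation is the main obstacle, and the most economical route is to avoid Saito (the divisor is not free). I will instead use the exact sequence $\Dlog X\to\Theta_T\to\cO_X$, $\xi\mapsto\xi(G)\bmod G$, whose image is $J/(G)$ with $J=(G,\partial G)$. Since the kernel is what we want, I will compute the syzygies of $(G,\partial_{X_1}G,\partial_YG,\partial_ZG)=(G,-2X_1Y,-X_1^2,2Z)$ over $\C\{X_1,Y,Z\}$, the $w$ factor being trivial. The Koszul-type relations among the three partials, together with the Euler-type relation, give exactly $\xi_1,\dots,\xi_4$. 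I will confirm that they generate either by the standard known result for the Whitney umbrella, or by a quick module computation: the partials $(-2X_1Y,-X_1^2,2Z)$ form an ideal whose syzygy module is generated by $\xi_2,\xi_3,\xi_4$ because $(X_1Y,X_1^2)$ has the single syzygy $X_1\partial_{X_1}$-type relation and $Z$ is a nonzerodivisor on the rest. Adding $\xi_1$ accounts for $G$.

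For (d), I will restrict the fields to $D_X=V(X_1,Z)$. Setting $X_1=Z=0$ gives $\xi_1\mapsto0$, $\xi_2\mapsto2Y\partial_Y$, $\xi_3\mapsto0$, $\xi_4\mapsto0$ and $\partial_w\mapsto\partial_w$. The image is therefore $\cO_{D_X}Y\partial_Y\oplus\cO_{D_X}\partial_w$, which is evidently the set of derivations of $\C\{Y,w\}$ preserving $(Y)$. Here I must note that every $\xi\in\Dlog X$ preserves $\fc$: this follows from Seidenberg, or directly, since the fields are tangent to $V(X_1,Z)$, so restriction is well defined.

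For (e), direct substitution gives $D^2(f_0)=V(y_1+y_2,x_1)$, parametrised by $(y_1,w)$, with the swap acting as $y_1\mapsto-y_1$. The invariant ring is $\C\{y_1^2,w\}$, and the map to $D_X$ is $Y=y_1^2$, an isomorphism. The fixed locus $y_1=0$ maps to $Y=0$, which is $C_X$.
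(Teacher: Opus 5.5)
Your proposal is correct and follows the paper's proof essentially step for step. The even/odd splitting in (a) is the paper's monomial count in another form. Part (b) uses the same test functions $g=1,y$ and the same decomposition $x_1\cO_S=X_1\cO_X+Z\cO_X$. Part (c) is the same reduction, via $\xi_1(G)=2G$, to the syzygies of $(X_1Y,X_1^2,Z)$, using that $Z$ is a nonzerodivisor modulo $(X_1Y,X_1^2)$. Parts (d) and (e) are the same direct restrictions.

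One wording slip in (b): $x_1g$ lies in $\fc$ not because $\fc$ is an ideal (that would be circular), but because $x_1gh$ again vanishes on $\{x_1=0\}$ and so lies in $\cO_X$ by (a). Your parenthetical already shows this.
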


\begin{proof}
(a) $\C\{x_1,y^2,x_1y,w\}$ is spanned by the monomials $x_1^ay^bw^c$ with $b$ even or $a\ge1$ (for $a\ge1$, $b$ odd: $x_1^ay^b=(x_1y)x_1^{a-1}y^{b-1}$). These are exactly the monomials whose coefficient in $h$ is unconstrained by $h(0,y,w)=h(0,-y,w)$, which kills the coefficients of $y^{\text{odd}}w^c$. The divided-difference form: $h[12]$ restricted to $V(y_1+y_2,x_1)$ is $(h(0,y_1,w)-h(0,-y_1,w))/(2y_1)$.

(b) $h\in\fc$ iff $hg\in\cO_X$ for all $g$; with $g=1$ and $g=y$: $h(0,y,w)$ and $yh(0,y,w)$ both even in $y$ forces $h(0,y,w)=0$, i.e.\ $x_1\mid h$. Conversely $x_1g\in\cO_X$ for all $g$ since $x_1y^b\in\cO_X$. The double points are the pairs $(0,\pm y,w)$, so $D=V(x_1)$; $q[12]=(x_1y_1-x_1y_2)/(y_1-y_2)=x_1$. In target coordinates $x_1\cO_S=x_1\C\{x_1,y,w\}$ is generated over $\cO_X$ by $x_1=X_1$ and $x_1y=Z$ (every $x_1^ay^bw^c$ with $a\ge1$ is $X_1\cdot(\ldots)$ if $b$ is even and $Z\cdot(\ldots)$ if $b$ is odd), so $\fc=(X_1,Z)\cO_X$, and $\cO_X/\fc=\C\{X_1,Y,Z,w\}/(Z^2-X_1^2Y,X_1,Z)=\C\{Y,w\}$.

(c) $G$ is weighted homogeneous for $X_1,Y,Z$ of weights $1,2,2$, and $\xi_1(G)=2Z^2-2X_1^2Y=2G$. Hence any $\xi$ with $\xi(G)=hG$ differs from $\tfrac h2\xi_1$ by a field annihilating $G$, i.e.\ a syzygy of $(G_{X_1},G_Y,G_Z)=(-2X_1Y,-X_1^2,2Z)$. Since $Z$ is a nonzerodivisor modulo $(X_1Y,X_1^2)$, the syzygies of $(X_1Y,X_1^2,Z)$ are generated by the Koszul syzygies involving $Z$ and the syzygies of $(X_1Y,X_1^2)$, which are generated by $(X_1,-Y)$: this gives $\xi_4\propto(Z,0,X_1Y)$, $\xi_3\propto(0,2Z,X_1^2)$ and $\xi_2\propto(-X_1,2Y,0)$ (check: $\xi_2(G)=2X_1^2Y-2X_1^2Y=0$, $\xi_3(G)=-2X_1^2Z+2X_1^2Z=0$, $\xi_4(G)=-2X_1YZ+2X_1YZ=0$). $\partial_w$ is tangent.

(d) Restrict to $D_X=V(X_1,Z)$: $\xi_1,\xi_3,\xi_4\mapsto0$, $\xi_2\mapsto2Y\partial_Y$, $\partial_w\mapsto\partial_w$. A derivation $\delta=A\partial_Y+B\partial_w$ of $\C\{Y,w\}$ preserves $(Y)$ iff $Y\mid A$.

(e) $p=y^2$: $p[12]=y_1+y_2$; $q=x_1y$: $q[12]=x_1$.
\end{proof}

\begin{remark}\label{rem:S0A0}
The $S_0A_0$ multigerm (cross-cap plus a transverse branch) and the $A_0^{m+1}$ multigerm occur only at the origin (\S\ref{sec:stable}) and need no local computation.
\end{remark}

\subsection{Summary of the local structure away from the origin}\label{sec:localsummary}

\begin{center}
\footnotesize
\setlength{\tabcolsep}{4pt}
\begin{tabular}{@{}lllllll@{}}
\toprule
type of $P$ & $X$ at $P$ & $D_X$ at $P$ & $\tD\to D_X$ & $C_X$ & $\Lambda$ & $\bL$ \\
\midrule
$A_0$ & smooth & $\emptyset$ & --- & --- & unit & unit \\
$A_0^2$ & $u_1u_2=0$ & smooth $V(u_1,u_2)$ & iso & absent & red.\ eq.\ of $D$ & unit \\
$A_0^3$ ($m=3$) & $u_1u_2u_3=0$ & 3 planes along $T$ & $\bigsqcup\Pi_i\to D_X$ & absent & red.\ eq.\ of $D$ & red.\ eq.\ of $T$ on each $\Pi_i$ \\
$S_0$ ($m=3$) & umbrella$\times\C$ & smooth plane & iso & line $Y{=}0$ & $x_1$ & unit \\
\bottomrule
\end{tabular}
\end{center}

(The entries for $\Lambda,\Lambda_3$ are Lemmas~\ref{lem:Lambda-reduced}, \ref{lem:Lambda3} together with Prop.~\ref{prop:nc}(b),(e), \ref{prop:crosscap}(b): in each case the conductor is the reduced ideal of the double locus, which is $V(\Lambda)$, resp.\ of the triple locus, which is $V(\Lambda_3)$.)

\section{Image derivations and the conductor}\label{sec:liftable}

\subsection{Source fields admitting target lifts}\label{sec:liftmodule}

Recall $L=\{\eta\in\theta_S:\ tf(\eta)\in\omega f(\Theta_T)\}$ and the target module $\operatorname{Lift}(f)$ from Section~\ref{sec:setting}.

\begin{proposition}\label{prop:L}
\begin{enumerate}[label=\textup{(\alph*)}]
\item $L=\{\eta\in\theta_S:\eta(\cO_X)\subseteq\cO_X\}$, and restriction to $\cO_X$ is an isomorphism of graded $\cO_X$-modules $L\xrightarrow{\ \sim\ }\Der_\C(\cO_X)$. Also $\Der_\C(\cO_X)\cong\Dlog X/G\,\Theta_T$ where $\operatorname{Lift}(f)=\Dlog X=\{\xi\in\Theta_T:\xi(G)\in(G)\}$.
\item \textup{(U1)} $\Hilb\NAe=\Hilb\theta(f)-\Hilb\theta_S-(1-t^{\mu k})\Hilb\Theta_T+\Hilb L$.
\end{enumerate}
\end{proposition}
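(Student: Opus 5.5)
Part (a) rests on the fact that $f^*:\cO_T\to\cO_S$ is a ring homomorphism with image $\cO_X$. For $\eta\in\theta_S$ and $\xi\in\Theta_T$, the identity $tf(\eta)=\omega f(\xi)$ says exactly $\eta(f_j)=\xi_j\circ f$ for all $j$. This is equivalent to $\eta\circ f^*=f^*\circ\xi$ on $\cO_T$, since both sides are derivations along $f^*$ and they agree on the coordinates $X_i,Y,Z$.

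\textbf{Description of $L$.} If $\eta\in L$, the displayed relation gives $\eta(f^*h)=f^*(\xi h)\in\cO_X$, so $\eta(\cO_X)\subseteq\cO_X$. Conversely, suppose $\eta(\cO_X)\subseteq\cO_X$. Each $\eta(f_j)$ lies in $\cO_X$, so we may choose $\xi_j\in\cO_T$ with $f^*\xi_j=\eta(f_j)$. Then $\xi=\sum\xi_j\partial_j$ satisfies $tf(\eta)=\omega f(\xi)$. This identifies $L=\{\eta:\eta(\cO_X)\subseteq\cO_X\}$.

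\textbf{The map $L\to\Der_\C(\cO_X)$.} Restriction to $\cO_X$ is injective. Indeed, $\cO_S$ is the normalisation of $\cO_X$ with the same fraction field, and a derivation vanishing on $\cO_X$ vanishes on its fraction field, hence on $\cO_S$. Surjectivity is Seidenberg's theorem (Se): every derivation of the domain $\cO_X$ extends uniquely to a derivation of $\cO_S$, and this extension lies in $L$ by definition. The map preserves degrees by Lemma~\ref{lem:seidenberg-graded}.

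\textbf{Target description.} Every derivation of $\cO_X=\cO_T/(G)$ lifts to some $\xi\in\Theta_T$ with $\xi(G)\in(G)$, namely the lift obtained by lifting the images of the coordinates. Conversely, any $\xi\in\Dlog X$ induces a derivation of $\cO_X$, and $\xi$ induces zero exactly when $\xi(X_i),\xi(Y),\xi(Z)\in(G)$, i.e.\ when $\xi\in G\Theta_T$. This gives $\Der(\cO_X)\cong\Dlog X/G\Theta_T$. The equality $\operatorname{Lift}(f)=\Dlog X$ follows from the equivalence above: $\xi\in\operatorname{Lift}(f)$ if and only if $f^*\circ\xi$ is a derivation along $f^*$ of the form $\eta\circ f^*$. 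This happens if and only if $\xi$ preserves $\ker f^*=(G)$ and the induced derivation of $\cO_X$ extends to $\cO_S$, and the extension always exists by (Se).

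Part (b) is a dimension count in the graded exact sequence
\[
0\to \ker\to \theta_S\oplus\Theta_T\xrightarrow{\,tf-\omega f\,}\theta(f)\to\NAe\to0 .
\]
The kernel is $\{(\eta,\xi):tf\eta=\omega f\xi\}$. Projection to $\eta$ maps it onto $L$, and the fibre over $\eta=0$ is $\ker\omega f=\{\xi:\xi_j\circ f=0\}=G\Theta_T$, since $\ker f^*=(G)$. So $\Hilb\ker=\Hilb L+\Hilb(G\Theta_T)=\Hilb L+t^{\mu k}\Hilb\Theta_T$, because $G$ is a nonzerodivisor of degree $\mu k$. Substituting into the alternating sum of Hilbert series gives (U1).

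The only delicate point is the injectivity and surjectivity in part (a), which rely on birationality of $\cO_X\subset\cO_S$ and on (Se). Both were established in \S\ref{sec:equation}, so I expect no real obstacle.
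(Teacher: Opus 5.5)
Your proof is correct and follows the paper's argument for part (a) essentially step for step: derivations along $f^*$ (the chain rule), lifting the images of the coordinates, injectivity via the common fraction field, and surjectivity via (Se). For (b), counting the kernel of $\theta_S\oplus\Theta_T\to\theta(f)$ is the same dimension count as the paper's inclusion--exclusion for $tf(\theta_S)+\omega f(\Theta_T)$, with the small bonus that your version does not need the injectivity of $tf$, which the paper checks separately.
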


\begin{proof}
(a) If $tf(\eta)=\omega f(\xi)$ then for $H\in\cO_T$, $\eta(H\circ f)=\sum_j(\partial_jH\circ f)\eta(f_j)=\sum_j(\partial_jH\cdot\xi_j)\circ f=\xi(H)\circ f\in\cO_X$. Conversely if $\eta$ preserves $\cO_X$, then $\eta|_{\cO_X}$ is a derivation of $\cO_X=\cO_T/(G)$, which lifts to some $\xi\in\Theta_T$ (choose $\xi_j$ with $\xi_j\circ f=\eta(f_j)$), and then $tf(\eta)=\omega f(\xi)$. Restriction $L\to\Der(\cO_X)$ is injective since a derivation of $\cO_S$ vanishing on $\cO_X$ vanishes on the common fraction field, and surjective by (Se) (\S\ref{sec:equation}: $\cO_X$ is a domain with normalisation $\cO_S$). Every derivation of $\cO_T/(G)$ lifts to $\Theta_T$, and the lifts form $\Dlog X$, with kernel $G\Theta_T$. Conversely, each field in $\Dlog X$ induces a derivation of $\cO_X$ and hence, by (Se), a field in $L$. Therefore $\operatorname{Lift}(f)=\Dlog X$.

(b) $tf$ is injective: $tf(\eta)=0$ gives $\eta(x_i)=\eta_{x_i}=0$ and then $\eta_yp_y=\eta_yq_y=0$, so $\eta_y=0$ as $p_y\ne0$. $\ker\omega f=\{\xi:\xi_j\circ f=0\ \forall j\}=(\ker f^*)\Theta_T=G\Theta_T$ (\S\ref{sec:equation}), so $\Hilb\omega f(\Theta_T)=(1-t^{\mu k})\Hilb\Theta_T$. Finally $tf(\theta_S)\cap\omega f(\Theta_T)=tf(L)$ by definition of $L$, and $\Hilb(A+B)=\Hilb A+\Hilb B-\Hilb(A\cap B)$.
\end{proof}

\subsection{The depth lemma and the seminormality criterion}\label{sec:seminormal}

\begin{lemma}\label{lem:seminormal}
$H^0_\fm(\cO_S/\cO_X)=0$. Consequently $h\in\cO_S$ lies in $\cO_X$ if and only if, for every $P\in X\setminus\{0\}$, its germ as a section of $f_*\cO_S^{an}$ at $P$ belongs to $\cO_{X,P}^{an}$, if and only if $h(s)=h(s')$ for every double point pair $(s,s')$ with $f(s)\ne0$, if and only if $h[12]\in I_{D^2(f)}$.
\end{lemma}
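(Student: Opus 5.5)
The plan has two parts. First we show $H^0_\fm(\cO_S/\cO_X)=0$ with a depth argument. Then we derive the three equivalent membership criteria from this vanishing, using the comparison lemma and the local models.

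For the vanishing, consider the graded exact sequence of $\cO_X$-modules $0\to\cO_X\to\cO_S\to\cO_S/\cO_X\to0$. Here $\cO_X$ is a hypersurface ring of dimension $m\ge2$, hence Cohen--Macaulay, so $\depth\cO_X=m\ge2$. By the remark on depth over a finite extension, $\cO_S$ also has depth $m$ as an $\cO_X$-module. The long exact sequence of local cohomology contains
\[
H^0_\fm(\cO_S)\to H^0_\fm(\cO_S/\cO_X)\to H^1_\fm(\cO_X),
\]
and both outer terms vanish by Lemma~\ref{lem:depth2}(b). Hence $H^0_\fm(\cO_S/\cO_X)=0$. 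Equivalently, by \ref{D1}, $\depth(\cO_S/\cO_X)\ge\min(m-1,m)\ge1$.

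Next we prove the first equivalence. The nontrivial direction assumes that the germ of $h$ at every $P\in X\setminus\{0\}$ lies in $\cO^{an}_{X,P}$. We apply Lemma~\ref{lem:comparison} to $M=\cO_X\subseteq N=\cO_X+\cO_Xh$; this is a possibly non-graded module, as the remark after that lemma allows. Flat base change gives $N\otimes\cO^{an}_P=M\otimes\cO^{an}_P$ at each such $P$. Therefore $N/M\subseteq\cO_S/\cO_X$ has finite length, so it lies in $H^0_\fm(\cO_S/\cO_X)=0$, and $h\in\cO_X$.

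There is one point to check here. The flat base change must correctly identify $(f_*\cO_S)\otimes\cO^{an}_P$ with $\prod_{s\in f^{-1}(P)}\cO^{an}_{S,s}$, and the image of $\cO_X$ with $\cO^{an}_{X,P}$. The first identification follows from finiteness of $f$. The second holds because $\ker f^*=(G)$ and tensoring with a flat module preserves the injection.

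The remaining equivalences are local, at points $P\ne0$. By (MG)/(St) and \S\ref{sec:multiple}, the multigerm at such a $P$ is $A_0$, $A_0^j$ ($j\le m$), or $S_0$. For $A_0$ there is nothing to prove. For $A_0^j$, Proposition~\ref{prop:nc}(a) says that membership in $\cO^{an}_{X,P}$ is exactly agreement of the branch germs on pairwise intersections, which is agreement at double point pairs. For $S_0$, Proposition~\ref{prop:crosscap}(a) gives $h(0,y,w)=h(0,-y,w)$, again agreement at the double point pairs near $P$.

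One subtlety needs care. The condition ``$h(s)=h(s')$ for double pairs with $f(s)\ne0$'' is a pointwise condition. In the $A_0^j$ case it gives agreement of restrictions to $H_i\cap H_l$ only as functions, but these intersections are reduced, so pointwise agreement suffices. The cross-cap case is covered by its own formula.

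Finally, for the divided-difference form, consider the pairs $(s,s')$ with $y_1\ne y_2$. There $h[12]=(h(s)-h(s'))/(y_1-y_2)$, so $h(s)=h(s')$ on the off-diagonal part of $D^2$ is equivalent to $h[12]$ vanishing there. Off-diagonal points are dense in $D^2(f)$ (Lemma~\ref{lem:Lambda}(c)), and $D^2(f)$ is reduced by (MM), so $h[12]$ vanishing pointwise is equivalent to $h[12]\in I_{D^2(f)}$. Because $D^2$ is a cone, vanishing off the origin already forces vanishing on all of it.

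Conversely, suppose $h[12]\in I_{D^2}$. Then $h(s)=h(s')$ at all off-diagonal double pairs, which closes the cycle of equivalences.

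The main obstacle I expect is the comparison step. One has to confirm that the analytic membership criterion genuinely matches $(\cO_X+\cO_Xh)\otimes\cO^{an}_P$ inside $(f_*\cO_S)\otimes\cO^{an}_P$. The depth computation itself is routine.
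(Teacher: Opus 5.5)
Your proposal is correct and follows essentially the same route as the paper. You obtain the vanishing of $H^0_{\mathfrak m}(\mathcal O_S/\mathcal O_X)$ from the Cohen--Macaulayness of $\mathcal O_X$ and $\mathcal O_S$; the paper uses the depth lemma where you use the local cohomology sequence, which is the same argument. You then apply the comparison lemma to $\mathcal O_X\subseteq\mathcal O_X+\mathcal O_Xh$, invoke the local models of Propositions~\ref{prop:nc}(a) and~\ref{prop:crosscap}(a), and use density of off-diagonal points in the reduced $D^2(f)$, exactly as the paper does. Your explicit check that the flat base change identifies the relevant analytic modules is a reasonable elaboration of what the paper leaves to the discussion after Lemma~\ref{lem:comparison}.
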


\begin{proof}
$\cO_X$ (hypersurface) and $\cO_S$ (regular, finite over $\cO_X$; \S\ref{sec:depth}) are CM $\cO_X$-modules of dimension $m$; \ref{D1} applied to $0\to\cO_X\to\cO_S\to\cO_S/\cO_X\to0$ gives $\depth(\cO_S/\cO_X)\ge m-1\ge1$, so $H^0_\fm(\cO_S/\cO_X)=0$. Put $U=X\setminus\{0\}$. If the section $h$ of $f_*\cO_S^{an}$ belongs to $\cO_X^{an}$ locally at every point of $U$, then Lemma~\ref{lem:comparison} applied to the (possibly non-graded) modules $M=\cO_X\subseteq N=\cO_X+\cO_Xh$ shows that the class of $h$ in $\cO_S/\cO_X$ is supported at $\fm$, hence lies in $H^0_\fm(\cO_S/\cO_X)=0$. By \S\ref{sec:local} (Prop.~\ref{prop:nc}(a), \ref{prop:crosscap}(a)) the local condition at $P\in X\setminus0$ is agreement of the branch functions on all nearby double-point pairs, using the whole multigerm over $P$. At cross-caps it is equivalently $h[12]\in I_{D^2}$ locally. Since $D^2(f)$ is reduced and the off-diagonal points $(s,s')$ with $f(s)\ne0$ are dense in $D^2(f)$ (Lemma~\ref{lem:Lambda}), the condition ``$h(s)=h(s')$ for all double point pairs off the origin'' is equivalent to $h[12]\in I_{D^2}$ (a function on the reduced $D^2$ vanishing on a dense set vanishes), and the cross-cap condition is included.
\end{proof}

\subsection{The conductor}\label{sec:conductor}

\begin{lemma}\label{lem:conductor}
The conductor $\fc=\{h\in\cO_S:h\cO_S\subseteq\cO_X\}$ of $\cO_X\subset\cO_S$ is $\fc=\Lambda\,\cO_S\cong\cO_S(-\deg\Lambda)$. Consequently $\cO_{D_X}:=\cO_X/\fc$ is a reduced Cohen--Macaulay ring of dimension $m-1$ with $\Hilb\cO_{D_X}=\Hilb\cO_X-t^{\deg\Lambda}\Hilb\cO_S$, and $D_X=\Spec\cO_{D_X}=f(D)$ is the double-point locus of $X$ with its reduced structure.
\end{lemma}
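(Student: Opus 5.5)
We first show $\Lambda\cO_S\subseteq\fc$. Since $\fc$ is an ideal of $\cO_S$, it suffices to check that $\Lambda g\in\cO_X$ for every $g\in\cO_S$. For this we use the seminormality criterion, Lemma~\ref{lem:seminormal}. For a double point pair $(s,s')$ off the origin, both $s$ and $s'$ lie in $D=V(\Lambda)$ (Lemma~\ref{lem:Lambda}(c)), so $\Lambda g(s)=0=\Lambda g(s')$. The criterion then gives $\Lambda g\in\cO_X$. Equivalently, in divided differences, $(\Lambda g)[12]$ vanishes on $D^2(f)$ off the origin, and since $D^2(f)$ is reduced it lies in $I_{D^2}$.

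For the reverse inclusion $\fc\subseteq\Lambda\cO_S$, take $h\in\fc$. We compare locally on the punctured cone using the table of \S\ref{sec:localsummary}, which combines Prop.~\ref{prop:nc}(b), Prop.~\ref{prop:crosscap}(b) and Lemma~\ref{lem:Lambda-reduced}. At every point $P\in X\setminus\{0\}$, the analytic conductor is the reduced ideal of the source double locus, namely $\Lambda\cO^{an}_S$ (note that $\Lambda$ generates the reduced ideal of $D$ locally, because it is square-free and $D$ is smooth or normal crossings there). The conductor commutes with flat base change, as noted after Lemma~\ref{lem:comparison}. So the comparison lemma applied to $\Lambda\cO_S\subseteq\fc$, viewed as $\cO_X$-modules, shows that $\fc/\Lambda\cO_S$ has finite length. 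This quotient embeds in $\cO_S/\Lambda\cO_S$, which is a hypersurface ring of dimension $m-1\ge1$, hence CM of positive depth; by \ref{D2} it has no nonzero finite-length submodules. Therefore $\fc=\Lambda\cO_S$. Since $\Lambda$ is homogeneous of degree $\lambda$, this ideal is isomorphic to $\cO_S(-\deg\Lambda)$.

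For the consequences, the Hilbert series formula follows from the exact sequence $0\to\Lambda\cO_S\to\cO_X\to\cO_{D_X}\to0$. Depth: $\fc\cong\cO_S(-\lambda)$ is maximal CM of depth $m$ over $\cO_X$ by the depth remark for finite extensions, and $\cO_X$ has depth $m$. Then \ref{D1} gives $\depth\cO_{D_X}\ge m-1$. The dimension is $m-1$, because $\cO_X/\fc\subseteq\cO_S/\Lambda\cO_S$ is a finite extension, so $\cO_{D_X}$ is CM. Reducedness: $\cO_X/\fc$ is a subring of $\cO_S/\Lambda\cO_S$, which is reduced by Lemma~\ref{lem:Lambda-reduced}, so it is reduced. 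Its zero set is $f(V(\Lambda))=f(D)$, the double-point locus.

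The main obstacle is the local identification of the conductor with $\Lambda$ at the points of the punctured cone. The point needing care is that, at an $A_0^j$ point, the local branch of $D$ on each sheet is the union of the intersections with the other sheets, and $\Lambda$ cuts out exactly this union reducedly. This is where Lemma~\ref{lem:Lambda-reduced} and the stability off the origin, (MG)/(St), are essential.
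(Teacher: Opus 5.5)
Your proof is correct and essentially follows the paper: the local conductor table of \S\ref{sec:localsummary} together with Lemma~\ref{lem:seminormal} gives $\Lambda\cO_S\subseteq\fc$, and the reverse inclusion is carried across the vertex. For that step you use Lemma~\ref{lem:comparison} and the positive depth of $\cO_S/\Lambda\cO_S$, whereas the paper observes that an element of $\fc$ vanishes on the dense set $D\setminus0$ and so lies in $I(D)=(\Lambda)$; your reducedness argument, via the injection $\cO_X/\fc\hookrightarrow\cO_S/\Lambda\cO_S$ into a reduced ring, is slightly simpler than the paper's Cohen--Macaulay plus generically-reduced argument.
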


\begin{proof}
Let $h\in\cO_S$ be such that at every point $s\in S\setminus0$ the germ of $h$ lies in the local conductor (of the multigerm over $f(s)$). Then for every $g\in\cO_S$, $hg$ lies in $\cO_X$ locally on $U$, hence $hg\in\cO_X$ by Lemma~\ref{lem:seminormal}. So $h\in\fc$. Conversely elements of $\fc$ are in the local conductors. By \S\ref{sec:localsummary} the local conductor at every point of $S\setminus0$ is the reduced ideal of $D$ there. Hence $\fc=\{h:h|_{D\setminus0}=0\}=I(D)$, using that $D$ is reduced and every component of $D$ meets $S\setminus0$ densely. By Lemma~\ref{lem:Lambda-reduced}, $I(D)=(\Lambda)$. The rest: $\fc\cong\cO_S(-\deg\Lambda)$ is a maximal CM $\cO_X$-module (\S\ref{sec:depth}), so \ref{D1} applied to $0\to\fc\to\cO_X\to\cO_{D_X}\to0$ gives $\depth\cO_{D_X}\ge m-1=\dim\cO_{D_X}$; $\cO_{D_X}$ is reduced off the origin (\S\ref{sec:localsummary}), generically reduced and CM, hence reduced \ref{D4}. Its zero set is $f(D)$.
\end{proof}

\begin{corollary}[$m=2$]\label{cor:ainv}
$\cO_{D_X}$ is a $1$-dimensional graded CM ring with $a(\cO_{D_X})=\deg\Lambda-a-2v$.
\end{corollary}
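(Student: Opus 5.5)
The approach is to read off the $a$-invariant from the Hilbert series via \ref{D5}, using Lemma~\ref{lem:conductor} for the Hilbert series and for Cohen--Macaulayness.

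\textbf{Dimension and CM property.} For $m=2$, Lemma~\ref{lem:conductor} shows that $\cO_{D_X}=\cO_X/\fc$ is reduced and Cohen--Macaulay of dimension $m-1=1$. It is graded because $\fc=\Lambda\cO_S$ is generated by the homogeneous element $\Lambda$ of degree $\lambda=\deg\Lambda$ (Lemma~\ref{lem:Lambda}(a)). So \ref{D5} applies and gives $a(\cO_{D_X})=\deg\Hilb\cO_{D_X}$, the degree of the rational function.

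\textbf{Computing the degree.} Lemma~\ref{lem:conductor} gives $\Hilb\cO_{D_X}=H_X-t^{\lambda}H_S$. With $m=2$, weight $a=a_1$, $H_S=1/((1-t^a)(1-t^v))$ and $H_X=(1-t^{\mu k})/((1-t^a)(1-t^{\mu v})(1-t^k))$. Put both over the common denominator $(1-t^a)(1-t^{\mu v})(1-t^k)$. The numerator is
\[
(1-t^{\mu k})-t^{\lambda}\frac{(1-t^{\mu v})(1-t^k)}{1-t^v}.
\]
The fraction here is a polynomial, because $(1-t^v)$ divides $(1-t^{\mu v})$. The key check is that the top-degree terms do not cancel. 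The first summand has top term $-t^{\mu k}$. The second has top term $-t^{\lambda+\mu v+k-v}$, and
\[
\lambda+\mu v+k-v=(\mu-1)(k-v)+(\mu-1)v+k=\mu k+(\mu-1)(k-v)-(\mu-1)v+(\mu-1)v.
\]
It is easier to compute directly: $(\mu-1)(k-v)+\mu v+k-v=\mu k-k-\mu v+v+\mu v+k-v=\mu k$. So both top terms have degree $\mu k$ and coefficient $-1$; they add rather than cancel, and the numerator has degree exactly $\mu k$. Hence
\[
\deg\Hilb\cO_{D_X}=\mu k-(a+\mu v+k)=\lambda+k-v-k-a-v,
\]
using $\mu k=\lambda+\mu v+k-v$. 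This gives $\mu k-a-\mu v-k=\lambda-a-2v+(k-v)-(k-v)=\lambda-a-2v$, matching the claim.

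\textbf{Main obstacle.} The only delicate point is making sure that the numerator has degree exactly $\mu k$ and not less. This was handled above: both summands contribute top coefficient $-1$, so there is no cancellation, and no common factor reduces the degree. Degree is in any case invariant under cancelling factors, so only the numerator's top term matters. Everything else is immediate from Lemma~\ref{lem:conductor} and \ref{D5}.
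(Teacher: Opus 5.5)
Your route is the paper's: Cohen--Macaulayness and dimension one come from Lemma~\ref{lem:conductor}, and then \ref{D5} is applied to $\Hilb\cO_{D_X}=H_X-t^{\lambda}H_S$. However, the degree computation, which you correctly identify as the key check, is wrong. You reach $\lambda-a-2v$ only because two errors cancel.

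\textbf{Sign error.} The product $(1-t^{\mu v})(1-t^k)/(1-t^v)=(1+t^v+\dots+t^{(\mu-1)v})(1-t^k)$ has leading term $-t^{(\mu-1)v+k}$. So $-t^{\lambda}$ times it has leading term $+t^{\lambda+(\mu-1)v+k}=+t^{\mu k}$. This \emph{cancels} the $-t^{\mu k}$ coming from $1-t^{\mu k}$, rather than adding to it. Your numerator therefore has degree $\mu k-v$, with leading term $+t^{\lambda+k+(\mu-2)v}=t^{\mu k-v}$. This uses $\mu\ge2$. The competing term $-t^{\lambda+(\mu-1)v}=-t^{\mu k-k}$ is lower because $k>v$.

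\textbf{Arithmetic error.} From $\mu k=\lambda+\mu v+k-v$ one gets $\mu k-(a+\mu v+k)=\lambda-a-v$, not $\lambda-a-2v$. Your expression ``$\lambda+k-v-k-a-v$'' contains a spurious $-v$, so the asserted identity $\mu k-a-\mu v-k=\lambda-a-2v$ is false.

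With the correct numerator degree, $\deg\Hilb\cO_{D_X}=(\mu k-v)-(a+\mu v+k)=\lambda-a-2v$, as claimed.

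The paper avoids the cancellation by using the four-factor denominator $(1-t^a)(1-t^v)(1-t^{\mu v})(1-t^k)$. In $(1-t^{\mu k})(1-t^v)-t^{\lambda}(1-t^{\mu v})(1-t^k)$ the two $t^{\mu k+v}$ terms cancel, since $\lambda+\mu v+k=\mu k+v$. What remains is $1-t^v-t^{\mu k}-t^{\lambda}+t^{\mu k-k+v}+t^{\mu k-\mu v+v}$. Its top term $-t^{\mu k}$ is isolated because $k>v$ and $\mu\ge2$. This gives $\mu k-(a+v+\mu v+k)=\lambda-a-2v$ directly.

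Either denominator works once the leading term is found correctly. As written, though, your argument rests on a non-cancellation that does not hold, followed by an incorrect identity.
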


\begin{proof}
The numerator of $\Hilb\cO_X-t^{\deg\Lambda}\Hilb\cO_S$ over $(1-t^a)(1-t^v)(1-t^{\mu v})(1-t^k)$ is $1-t^v-t^{\mu k}-t^{\deg\Lambda}+t^{\mu k-k+v}+t^{\mu k-\mu v+v}$ (using $\deg\Lambda+k+\mu v=\mu k+v$), of degree $\mu k$ since $\mu k>\max(\deg\Lambda,\mu k-k+v,\mu k-\mu v+v)$ for $k>v$, $\mu\ge2$. So $\deg\Hilb=\mu k-(a+v+\mu v+k)=\deg\Lambda-a-2v$, and \ref{D5} applies.
\end{proof}

\begin{remark}\label{rem:fitting}
The Fitting ideal $F_1=I_{\mu-1}(Z\cdot\Id-Q)$ satisfies $F_1\cO_S=\fc$ (Mond--Pellikaan \cite{MP89}. See also \cite{NBPSV} for the target double and triple point spaces of finite multigerms). This identifies $D_X$ with the double-point scheme of \cite{MP89} but is not used in the proofs.
\end{remark}

\section{A warm-up: the two-dimensional case}\label{sec:thmA}

This section proves Theorem~A. The proof of Theorem~B does not depend on it. Let $m=2$, so $D$ and $D_X$ are curves, $\Sigma=V(p_y,q_y)$ is $0$-dimensional (hence $=\{0\}$ set-theoretically), and every point of $X\setminus0$ is of type $A_0$ or $A_0^2$ (triple points and cross-caps are isolated, hence at $0$; \S\ref{sec:stable}).

\begin{theorem}[$=$ Theorem A, first part]\label{thm:A1}
$L=\Der(\cO_X)=\cO_X\eta_E+\Lambda\theta_S$ and $\cO_X\eta_E\cap\Lambda\theta_S=\Lambda\cO_S\eta_E$.
\end{theorem}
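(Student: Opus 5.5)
The plan is to prove the two inclusions of $L=\cO_X\eta_E+\Lambda\theta_S$ separately. Throughout, $L$ is identified with $\Der(\cO_X)$ by Proposition~\ref{prop:L}(a).

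\emph{The inclusion $\supseteq$.} This is formal. The Euler field $\eta_E$ satisfies $\eta_E(h)=(\deg h)h$, so it preserves $\cO_X$, and $\cO_X\eta_E\subseteq L$ because $L$ is an $\cO_X$-module. For $\zeta\in\theta_S$ and $h\in\cO_X$ we have $(\Lambda\zeta)(h)\in\Lambda\cO_S=\fc\subseteq\cO_X$ by Lemma~\ref{lem:conductor}, so $\Lambda\theta_S\subseteq L$.

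\emph{The intersection.} Since $\Lambda\cO_S=\fc\subseteq\cO_X$, we have $\Lambda\cO_S\eta_E\subseteq\cO_X\eta_E\cap\Lambda\theta_S$. Conversely, suppose $g\eta_E\in\Lambda\theta_S$ with $g\in\cO_X$. Then $\Lambda$ divides both $a g x$ and $v g y$. The curve $D=V(\Lambda)$ is reduced (Lemma~\ref{lem:Lambda-reduced}) and does not contain the common zero set of $x$ and $y$, which is only the origin. Hence $\Lambda$ is coprime to $\gcd(x,y)=1$ in the UFD $\cO_S$, so $\Lambda\mid g$ and $g\eta_E\in\Lambda\cO_S\eta_E$.

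\emph{The inclusion $\subseteq$, first step: reduce to $D_X$.} Let $\eta\in L$ be homogeneous and let $\delta$ be the derivation it induces on $\cO_X$. By (Se), $\delta$ maps the conductor $\fc$ into itself, so it induces a derivation $\bar\delta$ of the reduced one-dimensional ring $\cO_{D_X}$ (Lemma~\ref{lem:conductor}). Off the origin, $D_X$ is a smooth $\C^*$-invariant curve: it is an $A_0^2$ locus, by Proposition~\ref{prop:nc}(d) and \S\ref{sec:localsummary}. By Lemma~\ref{lem:orbit}, its tangent line there is spanned by $E\ne0$. Hence $\bar\delta=gE$ on the punctured cone, with $g$ a homogeneous section of $\cO$ over $U$. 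Equivalently, by Seidenberg's theorem applied to the normalisation $\bar R$ of $\cO_{D_X}$, which is a product of weighted polynomial rings in one variable, one for each orbit closure, we get $g\in\bar R$ homogeneous with $g\,\fm\subseteq\cO_{D_X}$. This last property holds because $E(h)=(\deg h)h$.

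\emph{The main obstacle: showing $g\in\cO_{D_X}$.} This is where I expect the real work. First I would use \ref{D5} with Corollary~\ref{cor:ainv}: $\cO_{D_X}$ agrees with $\Gamma(U,\cO)$ in degrees $>\deg\Lambda-a-2v$. So it suffices to treat $\deg g=\deg\eta\le\deg\Lambda-a-2v$. In that range I would use $g\fm\subseteq\cO_{D_X}$ together with the explicit generators of $\cO_{D_X}$, namely $\cO_X/\fc$ with $\fc=\Lambda\cO_S$. If this range turns out to be nonempty and troublesome, the fallback is to argue on the source curve $D$ directly. Both $\eta$ and $\eta_E$ are tangent to $D$ (since $\eta(\Lambda)\in(\Lambda)$), so $\det(\eta,\eta_E)\in(\Lambda)$. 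The compatibility of $\eta$ with the double-point involution (Lemma~\ref{lem:seminormal}) should then rule out the Hamiltonian part of $\Dlog D$ and force the coefficient of $\eta_E$ to come from $\cO_X$.

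\emph{The inclusion $\subseteq$, final step.} Once $g$ lifts to $\tilde g\in\cO_X$, set $\eta'=\eta-\tilde g\eta_E$. Then $\eta'(\cO_X)\subseteq\fc=\Lambda\cO_S$. In particular $\eta'(x)$, $\eta'_y p_y$ and $\eta'_y q_y$ all lie in $(\Lambda)$. Since $V(p_y,q_y)=\Sigma$ is supported at the origin (\S\ref{sec:multiple}), no component of $D$ is contained in $V(p_y,q_y)$. So $\Lambda$ divides $\eta'_y$, and therefore $\eta'\in\Lambda\theta_S$, which proves $\eta\in\cO_X\eta_E+\Lambda\theta_S$.
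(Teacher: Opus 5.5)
Your outline agrees with the paper's proof everywhere except at the step you flag as the main obstacle, and there it has a genuine gap. The inclusion $\supseteq$, the intersection, the description $\bar\delta=gE$ on the punctured curve, the use of Corollary~\ref{cor:ainv} and \ref{D5} for degrees $\delta>\deg\Lambda-a-2v$, and the final division of $\eta'$ by $\Lambda$ are all sound. (For the intersection, the cleanest reason is simply $\Lambda\mid\gcd(agx,vgy)=g$ in the UFD $\cO_S$.)

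For $\delta\le\deg\Lambda-a-2v$, however, you give no argument, and your first suggestion cannot work. The condition $g\fm\subseteq\cO_{D_X}$ says only that $gE$ is a derivation of $\cO_{D_X}$. Take $\delta=a(\cO_{D_X})=\deg\Lambda-a-2v$, which lies in your range. By the definition of the $a$-invariant there exist $g\in\Gamma(D_X\setminus0,\cO_{D_X})_\delta\setminus(\cO_{D_X})_\delta$. Every such $g$ satisfies $g\fm\subseteq\cO_{D_X}$, because the two rings coincide in all higher degrees. This is the phenomenon $\Der(\cO_{D_X})\supsetneq\cO_{D_X}E$ of Remark~\ref{rem:m2fails}. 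Data intrinsic to $D_X$ therefore cannot close the argument; you must use that $\bar\delta$ comes from a field $\eta$ on the source.

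Your fallback contains the right object, $\det(\eta,\eta_E)$, but the mechanism you propose is not what works. Compatibility with the double-point involution does not remove the Hamiltonian part in general. For example, $\Lambda\partial_x\in L$ has $\det(\Lambda\partial_x,\eta_E)=vy\Lambda\ne0$. What removes it in the critical range is degree.

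Put $\psi=vy\,\eta_x-ax\,\eta_y$. As you note, both fields are tangent to the reduced curve $D$, so $\psi$ vanishes on $D$ and $\Lambda\mid\psi$. But $\deg\psi=\delta+a+v\le\deg\Lambda-v<\deg\Lambda$, so $\psi=0$. Since $x,y$ are coprime, $\eta=g'\eta_E$ with $g'\in\cO_S$. Then $\eta(f_j)=\deg(f_j)\,g'f_j\in\cO_X$ gives $g'\fm_X\subseteq\cO_X$, where $\fm_X=(x,p,q)\cO_X$. Hence the class of $g'$ lies in $H^0_\fm(\cO_S/\cO_X)=0$ (Lemma~\ref{lem:seminormal}). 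So $g'\in\cO_X$ and $\eta\in\cO_X\eta_E$ outright, with no need to lift $g$ through $\cO_{D_X}$. This is Step~3 of the paper's proof. It works for all $\delta<\deg\Lambda-a-v$, which covers your whole remaining range because $v\ge1$.
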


\begin{proof}
($\supseteq$) For $h\in\cO_X$, $h\eta_E(f_j)=h\deg(f_j)f_j\in\cO_X$. For $g\in\cO_S$ and $\eta\in\theta_S$, $\Lambda g\,\eta(f_j)\in\Lambda\cO_S=\fc\subseteq\cO_X$ (Lemma~\ref{lem:conductor}). (Intersection) $h\eta_E\in\Lambda\theta_S$ iff $\Lambda\mid h\,ax$ and $\Lambda\mid h\,vy$; a prime factor $\pi$ of the square-free $\Lambda$ cannot divide both $x$ and $y$, so $\pi\mid h$ for every $\pi$, i.e.\ $\Lambda\mid h$; and $\cO_X\cap\Lambda\cO_S=\Lambda\cO_S$ since $\Lambda\cO_S\subseteq\cO_X$.

($\subseteq$) Let $\eta\in L_\delta$ be homogeneous and $\xi\in(\Theta_T)_\delta$ with $\omega f(\xi)=tf(\eta)$ (Prop.~\ref{prop:L}; $\xi$ can be chosen homogeneous of degree $\delta$ since all maps are graded).

\emph{Step 1 (the Euler coefficient).} Let $P\in D_X\setminus0$. Since $f^{-1}(0)=\{0\}$ (\S\ref{sec:setting}), the preimages of $P$ lie in $S\setminus0$; $P$ is an $A_0^2$ point, $f^{-1}(P)=\{s,s'\}$, $f$ immersive at $s,s'$, with branches $X',X''$ transverse along $D_X$. From $\omega f(\xi)=tf(\eta)$: $\xi(P)=df_s\eta(s)\in T_PX'$ and $\xi(P)=df_{s'}\eta(s')\in T_PX''$, so $\xi(P)\in T_PX'\cap T_PX''=T_PD_X$ (transversality). By Lemma~\ref{lem:orbit} (applied to the reduced $\C^*$-invariant curve $D_X$, smooth at $P$), $T_PD_X=\C E(P)$, $E(P)\ne0$. Hence on the smooth curve $D_X\setminus0$ the field $\xi|_{D_X\setminus0}$ is a section of the tangent line bundle, of which $E$ is a nowhere-vanishing section. Thus
\[
\xi|_{D_X\setminus0}=\tilde g\,E,\qquad\tilde g\in\Gamma(D_X\setminus0,\cO_{D_X})_\delta ,
\]
homogeneous of degree $\delta$ because $\xi$ and $E$ are ($\deg E=0$). Since $df_s$ is injective and $df_s(\eta_E(s))=E(P)$, we get $\eta(s)=\tilde g(f(s))\,\eta_E(s)$ for every $s\in D\setminus0$.

\emph{Step 2 (degrees $\delta>\deg\Lambda-a-2v$).} By Corollary~\ref{cor:ainv} and \ref{D5}, $(\cO_{D_X})_\delta=\Gamma(D_X\setminus0,\cO_{D_X})_\delta$ for $\delta>\deg\Lambda-a-2v$, so $\tilde g\in(\cO_{D_X})_\delta$; lift it to $h\in(\cO_X)_\delta$ (the map $\cO_X\to\cO_{D_X}$ is a graded surjection). Then $\eta':=\eta-h\eta_E\in L_\delta$ satisfies $\eta'(s)=0$ for all $s\in D\setminus0$: its two components vanish on $D\setminus0$, hence on the reduced curve $D=V(\Lambda)$, hence are divisible by $\Lambda$. So $\eta'\in\Lambda\theta_S$ and $\eta\in\cO_X\eta_E+\Lambda\theta_S$.

\emph{Step 3 (degrees $\delta<\deg\Lambda-a-v$).} Put $\psi:=vy\,\eta_x-ax\,\eta_y\in(\cO_S)_{\delta+a+v}$. By Step~1, at every $s\in D\setminus0$, $(\eta_x(s),\eta_y(s))$ is proportional to $(ax,vy)(s)$, so $\psi(s)=0$. Hence $\psi$ vanishes on $D$ and $\Lambda\mid\psi$. As $\deg\psi<\deg\Lambda$, $\psi=0$: $vy\eta_x=ax\eta_y$. Since $x,y$ are coprime, $x\mid\eta_x$ and $y\mid\eta_y$; writing $\eta_x=axg'$ we get $\eta_y=vyg'$, so $\eta=g'\eta_E$ with $g'\in(\cO_S)_\delta$. Now $\eta\in L$ means $g'\deg(f_j)f_j\in\cO_X$ for $j=1,2,3$, i.e.\ $g'\fm_X\subseteq\cO_X$ where $\fm_X=(x,p,q)\cO_X$ is the irrelevant ideal of $\cO_X$. Thus the class of $g'$ in $\cO_S/\cO_X$ is killed by $\fm_X$, i.e.\ lies in $H^0_\fm(\cO_S/\cO_X)=0$ (Lemma~\ref{lem:seminormal}). So $g'\in\cO_X$ and $\eta\in\cO_X\eta_E$.

\emph{Step 4.} Since $v\ge1$, $\deg\Lambda-a-2v+1\le\deg\Lambda-a-v$, so every integer $\delta$ satisfies $\delta<\deg\Lambda-a-v$ or $\delta>\deg\Lambda-a-2v$.
\end{proof}

\begin{corollary}[$=$ Theorem A, second part]\label{cor:A2}
$\Hilb L=\Hilb\cO_X+t^{\deg\Lambda}(\Hilb\theta_S-\Hilb\cO_S)$, and $\Hilb\NAe=\mathcal H_2$.
\end{corollary}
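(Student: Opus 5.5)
We obtain the Hilbert series of $L$ from the two-term description in Theorem~\ref{thm:A1} by inclusion--exclusion, and then substitute into the formula (U1) of Proposition~\ref{prop:L}(b). Since $L=\cO_X\eta_E+\Lambda\theta_S$ with $\cO_X\eta_E\cap\Lambda\theta_S=\Lambda\cO_S\eta_E$, we get
\[
\Hilb L=\Hilb(\cO_X\eta_E)+\Hilb(\Lambda\theta_S)-\Hilb(\Lambda\cO_S\eta_E).
\]
The three terms are computed as follows.

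\begin{enumerate}
\item The map $h\mapsto h\eta_E$ is injective on $\cO_X\subset\cO_S$, because $\eta_E\neq0$ and $\theta_S$ is torsion-free. Since $\eta_E$ has degree $0$ in our convention (it is $\sum a_ix_i\partial_{x_i}+vy\partial_y$, and $x\partial_x$ has degree zero), this gives $\Hilb(\cO_X\eta_E)=\Hilb\cO_X$.
\item Multiplication by $\Lambda$ is injective on the free module $\theta_S$ and raises degrees by $\deg\Lambda$. Hence $\Hilb(\Lambda\theta_S)=t^{\deg\Lambda}\Hilb\theta_S$.
\item By the same two arguments, $\Hilb(\Lambda\cO_S\eta_E)=t^{\deg\Lambda}\Hilb\cO_S$.
\end{enumerate}

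Together these give the first claim, $\Hilb L=\Hilb\cO_X+t^{\deg\Lambda}(\Hilb\theta_S-\Hilb\cO_S)$.

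For the second claim we write everything in terms of $H_S$, $H_X$, $A_S$, $A_T$ with $m=2$. The grading conventions of Section~\ref{sec:setting} give
\[
\Hilb\theta_S=A_SH_S,\qquad \Hilb\theta(f)=A_TH_S,\qquad \Hilb\cO_S=H_S,\qquad \Hilb\cO_X=H_X,
\]
and $(1-t^{\mu k})\Hilb\Theta_T=A_TH_X$, because $H_X=(1-t^{\mu k})\Hilb\cO_T$ by \S\ref{sec:equation}. Inserting these and $\Hilb L$ into (U1), with $\lambda=\deg\Lambda$ from Lemma~\ref{lem:Lambda}(a), yields
\[
A_TH_S-A_SH_S-A_TH_X+H_X+t^{\lambda}(A_S-1)H_S=\mathcal H_2 .
\]

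The main point needing care is the degree bookkeeping. One has to confirm that the shifts $\cO(w_i)$ produce exactly the factors $t^{-w_i}$ in $A_S$ and $A_T$, and that $\eta_E$ sits in degree $0$, so that no stray powers of $t$ appear. Beyond that the argument is formal.
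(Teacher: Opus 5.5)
Your proposal is correct and follows the paper's proof exactly. Both use inclusion--exclusion on $L=\cO_X\eta_E+\Lambda\theta_S$ with the intersection $\Lambda\cO_S\eta_E$ from Theorem~\ref{thm:A1}, identify the three pieces as $\cO_X$, $\theta_S(-\deg\Lambda)$ and $\cO_S(-\deg\Lambda)$, and substitute into (U1); your rewriting in terms of $A_S,A_T,H_S,H_X$ is the bookkeeping the paper leaves implicit when it concludes that the result is $\mathcal H_2$.
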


\begin{proof}
$\cO_X\eta_E\cong\cO_X$ (torsion-free), $\Lambda\theta_S\cong\theta_S(-\deg\Lambda)$, and the intersection is $\Lambda\cO_S\eta_E\cong\cO_S(-\deg\Lambda)$. So $\Hilb L=\Hilb\cO_X+t^{\deg\Lambda}(\Hilb\theta_S-\Hilb\cO_S)$. Insert into (U1) with $\Hilb\cO_S=1/((1-t^a)(1-t^v))$, $\Hilb\theta_S=(t^{-a}+t^{-v})\Hilb\cO_S$, $\Hilb\theta(f)=(t^{-a}+t^{-\mu v}+t^{-k})\Hilb\cO_S$, $\Hilb\cO_T=1/((1-t^a)(1-t^{\mu v})(1-t^k))$, $\Hilb\Theta_T=(t^{-a}+t^{-\mu v}+t^{-k})\Hilb\cO_T$, $\Hilb\cO_X=(1-t^{\mu k})\Hilb\cO_T$. This is $\mathcal H_2$ as defined in Section~\ref{sec:setting}. The finite-sum expansion is given in Section~\ref{sec:expanded} and verified in Appendix~\ref{sec:algebra}.
\end{proof}

\section{\texorpdfstring{The case $m=3$: rigidity on the double-point surface}{The case m=3: rigidity on the double-point surface}}\label{sec:rigidity}

Now $m=3$. By \S\ref{sec:stable}, $X\setminus0$ consists of points of type $A_0$, $A_0^2$ (a dense open subset of the surface $D_X\setminus0$), $A_0^3$ (the triple curve $T_X\setminus0$, possibly empty) and $S_0$ (the cross-cap curve $C_X\setminus0$, $C_X=f(\Sigma)$). Let $I_{C_X}\subset\cO_{D_X}$ be the reduced ideal of $C_X$ and
\[
\Der(\cO_{D_X};C_X):=\{\delta\in\Der_\C(\cO_{D_X}):\ \delta(I_{C_X})\subseteq I_{C_X}\}.
\]

\begin{lemma}\label{lem:Phi}
Every $\xi\in\Dlog X$ induces a derivation of $\cO_{D_X}=\cO_X/\fc$, and the induced derivation lies in $\Der(\cO_{D_X};C_X)$. This defines a graded $\cO_X$-linear map $\Phi:L\cong\Der(\cO_X)\to\Der(\cO_{D_X};C_X)$ with $\ker\Phi=\Lambda\theta_S$.
\end{lemma}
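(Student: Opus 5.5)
The plan is to build $\Phi$ from Seidenberg's theorem, to check tangency to $C_X$ through the local models of \S\ref{sec:local} together with the reducedness of $C_X$, and to compute the kernel by a divisibility argument with the square-free $\Lambda$.

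\emph{Construction of $\Phi$.} Let $\xi\in\Dlog X$. By Proposition~\ref{prop:L}, $\xi$ induces a derivation $\delta$ of $\cO_X$, which corresponds to a field $\eta\in L$. By (Se) applied to $\cO_X\subset\cO_S$, $\delta$ maps the conductor $\fc$ into itself. It therefore descends to a derivation $\bar\delta$ of $\cO_{D_X}=\cO_X/\fc$. Concretely $\bar\delta$ is restriction of $\xi$ to $D_X$, and it depends only on $\delta$, because $G\Theta_T$ acts by zero on $\cO_X$. The map $\delta\mapsto\bar\delta$ is $\cO_X$-linear, and it is graded of degree zero since every identification involved is graded.

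\emph{Tangency to $C_X$.} We must show $\bar\delta(I_{C_X})\subseteq I_{C_X}$. Since $C_X$ is a reduced curve and its points off the origin are dense in every component, it suffices to show that $\bar\delta(h)$ vanishes on $C_X\setminus0$ for every $h\in I_{C_X}$. This is local at each $P\in C_X\setminus0$. By \S\ref{sec:stable}, $P$ is a cross-cap point, and all the objects involved are $\cA$-invariant with analytic versions compatible with the algebraic ones (\S\ref{sec:comparison}, \S\ref{sec:local0}). So we may compute in the normal form $S_0$, where Proposition~\ref{prop:crosscap}(d) says that the image of $\Dlog X$ in $\Der(\cO_{D_X})$ is exactly $\Der(\cO_{D_X};C_X)$. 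Hence $\bar\delta(h)$ lies in the local ideal of $C_X$ at $P$, that is, it vanishes near $P$ on $C_X$. (If one prefers a module-theoretic form, Lemma~\ref{lem:comparison} applies to $M=I_{C_X}\subseteq N=I_{C_X}+\cO_{D_X}\bar\delta(I_{C_X})$. The quotient is then supported at the vertex. Since $\cO_{D_X}/I_{C_X}$ is reduced of dimension one, it has positive depth by \ref{D2}, so the quotient vanishes.)

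\emph{Kernel.} First, $\Lambda\theta_S\subseteq\ker\Phi$: for $g\in\cO_S$, $\eta\in\theta_S$ and $h\in\cO_X$ we have $\Lambda g\,\eta(h)\in\Lambda\cO_S=\fc$ by Lemma~\ref{lem:conductor}. Conversely, let $\eta=\sum\eta_{x_i}\partial_{x_i}+\eta_y\partial_y\in L$ satisfy $\eta(\cO_X)\subseteq\fc=\Lambda\cO_S$. Applying $\eta$ to $X_i\mapsto x_i$ gives $\Lambda\mid\eta_{x_i}$. Applying it to $Y\mapsto p$ and $Z\mapsto q$, and subtracting the $x$-parts, gives $\eta_y p_y\in(\Lambda)$ and $\eta_y q_y\in(\Lambda)$. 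By Lemma~\ref{lem:Lambda-reduced}, $\Lambda$ is square-free with $V(\Lambda)=D$ of pure dimension $2$, while $\Sigma=V(p_y,q_y)$ is a curve (MM). So for every prime factor $\pi$ of $\Lambda$, at least one of $p_y,q_y$ is not divisible by $\pi$, and hence $\pi\mid\eta_y$. It follows that $\Lambda\mid\eta_y$ and $\eta\in\Lambda\theta_S$.

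The main obstacle is the tangency step: the algebraic reduced ideal $I_{C_X}$ has to be matched with the local ideal of the line $Y=0$ in the cross-cap model. This rests on the compatibility of reduced structures with analytic base change recorded after Lemma~\ref{lem:comparison}.
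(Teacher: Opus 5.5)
Your proposal is correct and follows the paper's proof essentially step for step. For conductor preservation you cite the ``in particular'' clause of (Se), whereas the paper verifies it directly via $\eta(hg)=\eta(h)g+h\eta(g)$. Tangency goes through Proposition~\ref{prop:crosscap}(d) at cross-cap points followed by closure and reducedness of $I_{C_X}$, and the kernel uses the same square-free divisibility argument with $\dim V(\pi)=2>\dim\Sigma$. Your optional depth-one reformulation of the tangency step is also valid but not needed.
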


\begin{proof}
Let $\eta\in L$ correspond to $\xi$. For $h\in\fc$ and $g\in\cO_S$: $\eta(hg)=\eta(h)g+h\eta(g)$, where $\eta(hg)\in\cO_X$ (as $hg\in\fc\subseteq\cO_X$) and $h\eta(g)\in\fc\cO_S\subseteq\cO_X$. Hence $\eta(h)g\in\cO_X$ for all $g$, i.e.\ $\eta(h)\in\fc$. So $\eta$ (equivalently $\xi$) preserves $\fc$ and induces $\bar\xi\in\Der(\cO_{D_X})$. Tangency to $C_X$: for $g\in I_{C_X}$, the function $\bar\xi(g)$ vanishes on $C_X\setminus0$ by Prop.~\ref{prop:crosscap}(d) (at an $S_0$ point the induced derivations are $\cO$-combinations of $Y\partial_Y,\partial_w$, which preserve the local reduced ideal $(Y)$ of $C_X$), hence on $C_X$ (closure), hence $\bar\xi(g)\in I_{C_X}$ (reduced ideal). Kernel: $\Phi(\eta)=0$ iff $\xi(\cO_X)\subseteq\fc$ iff $\eta(f_j)\in\Lambda\cO_S$ for all $j$. If $\eta\in\Lambda\theta_S$ this holds. Conversely, $\Lambda\mid\eta_{x_1},\eta_{x_2}$ and then $\Lambda\mid\eta(p)-\eta_{x}\cdot p_x=\eta_yp_y$ and $\Lambda\mid\eta_yq_y$; a prime factor $\pi$ of $\Lambda$ not dividing $\eta_y$ would divide $p_y$ and $q_y$, so $V(\pi)\subseteq\Sigma$, impossible as $\dim V(\pi)=2>1=\dim\Sigma$. Hence $\Lambda\mid\eta_y$ ($\Lambda$ square-free) and $\eta\in\Lambda\theta_S$.
\end{proof}

\begin{theorem}[rigidity]\label{thm:rigidity}
$\Phi$ induces an isomorphism $L/\Lambda\theta_S\xrightarrow{\ \sim\ }\Der(\cO_{D_X};C_X)$.
\end{theorem}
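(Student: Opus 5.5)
The plan is to prove surjectivity of $\Phi$; injectivity of the induced map is already Lemma~\ref{lem:Phi} ($\ker\Phi=\Lambda\theta_S$). Surjectivity will follow from Lemma~\ref{lem:depth2}(c), applied to the graded submodule $M:=\Phi(L)$ of $N:=\Der(\cO_{D_X};C_X)$. This requires three ingredients: $\depth M\ge2$, $\depth N\ge1$, and $\widetilde M|_U=\widetilde N|_U$ on the punctured cone of $\Spec\cO_{D_X}$, which we check locally via Lemma~\ref{lem:comparison}.

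\emph{Depth of $M$.} By Proposition~\ref{prop:L}(a), $L\cong\Der_\C(\cO_X)\cong\Hom_{\cO_X}(\Omega_{\cO_X},\cO_X)$. The ring $\cO_X$ is a hypersurface, hence CM of dimension $3$, so \ref{D3} gives $\depth L\ge2$. The kernel $\Lambda\theta_S\cong\theta_S(-\deg\Lambda)$ is a free $\cO_S$-module, hence maximal CM of depth $3$ over $\cO_X$ (\S\ref{sec:depth}). Apply \ref{D1} to
\[
0\to\Lambda\theta_S\to L\to M\to0 .
\]
It gives $\depth M\ge\min(\depth\Lambda\theta_S-1,\depth L)\ge2$. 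All depths may be computed over $\cO_X$ or over $\cO_{D_X}$ by the remark on finite extensions.

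\emph{Depth of $N$.} The module $N$ sits inside $\Der(\cO_{D_X})=\Hom(\Omega_{\cO_{D_X}},\cO_{D_X})$, which embeds into $\cO_{D_X}^{4}$ via the values on the generators $X_1,X_2,Y,Z$. Now $\cO_{D_X}$ is reduced of dimension $2$ (Lemma~\ref{lem:conductor}), so it has positive depth, and \ref{D2} gives $\depth N\ge1$.

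\emph{Local equality.} Let $P\in D_X\setminus0$. By \S\ref{sec:local0}, the modules $\Der$, $\Dlog X$, the conductor, and the reduced ideal of $C_X$ are all compatible with passing to $\cO^{an}_P$; for $\Der$ this holds because $\Hom(\Omega,-)$ commutes with flat base change. So it suffices to show that restriction $\Dlog X\to\Der(\cO_{D_X};C_X)$ is surjective in each normal form occurring on $D_X\setminus0$.
\begin{itemize}
\item At an $A_0^2$ point, $C_X$ is absent, so $N$ is all of $\Der(\cO_{D_X})$, and surjectivity is Proposition~\ref{prop:nc}(d).
\item At an $A_0^3$ point, $C_X$ is again absent, and surjectivity is Proposition~\ref{prop:nc}(e).
\item At an $S_0$ point, the image is exactly $\Der(\cO_{D_X};C_X)$ by Proposition~\ref{prop:crosscap}(d).
\end{itemize}
Since $L$ corresponds to $\Dlog X$ modulo $G\Theta_T$, we get $M\otimes\cO^{an}_P=N\otimes\cO^{an}_P$ for every $P\in D_X\setminus0$. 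Lemma~\ref{lem:comparison} then gives $\widetilde M|_U=\widetilde N|_U$, and Lemma~\ref{lem:depth2}(c) yields $M=N$.

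The step needing most care is this local identification. One must check that the global object $\Der(\cO_{D_X};C_X)$, defined using the reduced ideal $I_{C_X}$, localises to the corresponding analytic object. This holds because reduced structures and $\Hom$ both commute with $R_P\to\cO^{an}_P$ (\S\ref{sec:comparison}), and because near an $A_0^2$ or $A_0^3$ point $I_{C_X}$ becomes the unit ideal, so the tangency condition there is vacuous.
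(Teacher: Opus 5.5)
Your proposal is correct and follows the paper's proof essentially step for step. You get injectivity from Lemma~\ref{lem:Phi}, and local surjectivity at $A_0^2$, $A_0^3$ and $S_0$ points from Propositions~\ref{prop:nc}(d),(e) and~\ref{prop:crosscap}(d) combined with Lemma~\ref{lem:comparison}. You then extend across the vertex by Lemma~\ref{lem:depth2}(c), using $\depth L\ge2$ from \ref{D3}, $\depth\Lambda\theta_S=3$, the depth lemma, and $\depth N\ge1$ from the embedding $N\subseteq\cO_{D_X}^4$.
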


\begin{proof}
Injectivity is Lemma~\ref{lem:Phi}. For surjectivity we use Lemma~\ref{lem:depth2}(c) with $M=\Phi(L)\subseteq N=\Der(\cO_{D_X};C_X)$, both submodules of the free $\cO_{D_X}$-module $\Theta_T\otimes\cO_{D_X}\cong\cO_{D_X}^4$ (a derivation of $\cO_{D_X}$ is determined by its values on the four coordinates).

\emph{Local surjectivity on $U$.} At $P\in D_X\setminus0$ of type $A_0^2$, $A_0^3$, $S_0$, Propositions~\ref{prop:nc}(d),(e) and \ref{prop:crosscap}(d) show that the image of $\Dlog X_P\to\Der(\cO_{D_X})_P$ is all of $\Der(\cO_{D_X})_P$ (types $A_0^2,A_0^3$, where $I_{C_X}$ is the unit ideal locally) resp.\ $\Der(\cO_{D_X};C_X)_P$ (type $S_0$). By Lemma~\ref{lem:comparison}, $\widetilde M|_U=\widetilde N|_U$.

\emph{Extension across the vertex.} The module
$L\cong\Hom_{\cO_X}(\Omega_{\cO_X},\cO_X)$ has depth at least two by \ref{D3}, since $\cO_X$ is Cohen--Macaulay of dimension three. The module $\Lambda\theta_S$ is free over $\cO_S$ up to grading shifts, and hence has depth three over $\cO_X$. The depth lemma gives
\[
\depth M=\depth(L/\Lambda\theta_S)
\ge\min\{\depth L,\depth(\Lambda\theta_S)-1\}\ge2.
\]
Meanwhile $N\subseteq\cO_{D_X}^4$ has depth at least one by \ref{D2}. Lemma~\ref{lem:depth2}(c) therefore gives $M=N$.
\end{proof}

\begin{remark}\label{rem:m2fails}
The depth argument is exactly what fails for $m=2$. There $\dim D_X=1$, $\theta_S$ has depth $2$, and $L/\Lambda\theta_S$ is only guaranteed depth $\ge1$. Indeed, for $m=2$ the image of $\Phi$ is the proper submodule $\cO_{D_X}E\subsetneq\Der(\cO_{D_X})$, and Step~3 of Theorem~\ref{thm:A1} is needed instead. Likewise for $m=3$ the inclusion $\Der(\cO_{D_X};C_X)\subseteq\Der(\cO_{D_X})$ is strict in general, the quotient being supported on $C_X$ (Prop.~\ref{prop:crosscap}(d)).
\end{remark}

\section{\texorpdfstring{The normalisation of the double-point surface ($m=3$)}{The normalisation of the double-point surface (m=3)}}\label{sec:normalisation}

\subsection{\texorpdfstring{$D^2/S_2$ and its invariant vector fields}{The quotient double-point surface and its invariant vector fields}}\label{sec:invariants}

Let $R=\C[x_1,x_2,y_1,y_2]$ with the swap $\sigma:y_1\leftrightarrow y_2$, $R^{S_2}=\C[x_1,x_2,s_1,s_2]$, $s_1=y_1+y_2$ (weight $v$), $s_2=y_1y_2$ (weight $2v$), $\Delta:=s_1^2-4s_2=(y_1-y_2)^2$. $R=R^{S_2}\oplus R^{S_2}(y_1-y_2)$ (invariant and anti-invariant parts), and $I:=I_{D^2}=(p[12],q[12])$ is generated by invariants; write $P_{12},Q_{12}\in R^{S_2}$ for $p[12],q[12]$ expressed in $x,s$ and $I':=(P_{12},Q_{12})\subset R^{S_2}$. We write $\bD$ for the class of $\Delta$ in $\cO_{\tD}$.

\begin{lemma}\label{lem:normalisation}
\begin{enumerate}[label=\textup{(\alph*)}]
\item $I\cap R^{S_2}=I'$ and $\cO_{D^2}^{S_2}=R^{S_2}/I'=:\cO_{\tD}$. It is a complete intersection of dimension $2$ (hence CM and Gorenstein), a normal domain (Lemma~\ref{lem:D2normal}), and $\tD:=V(P_{12},Q_{12})\subset\C^4_{x,s}$ is smooth off the origin, with
\[
\Hilb\cO_{\tD}=\frac{(1-t^{(\mu-1)v})(1-t^{k-v})}{(1-t^{a_1})(1-t^{a_2})(1-t^v)(1-t^{2v})}.
\]
\item The ring homomorphism $\cO_{D_X}\to\cO_{\tD}$ given by $X_i\mapsto x_i$, $Y\mapsto\bar P:=\tfrac12(p(x,y_1)+p(x,y_2))$, $Z\mapsto\bar Q:=\tfrac12(q(x,y_1)+q(x,y_2))$ is the composite of the two injections
\[
\cO_{D_X}=\cO_X/\fc\ \hookrightarrow\ \cO_S/\Lambda\cO_S=\cO_D\ \hookrightarrow\ \cO_{D^2},
\]
lands in the invariants $\cO_{D^2}^{S_2}=\cO_{\tD}$, and $\cO_{D_X}\to\cO_{\tD}$ is the normalisation of $\cO_{D_X}$. In particular $\cO_{D_X}$ is a domain.
\item The branch locus of $D^2\to\tD$ is the curve $\tC:=V(\bD)\subset\tD$, image of the fixed curve $F=D^2\cap\{y_1=y_2\}\cong\Sigma$; $(\Delta,P_{12},Q_{12})$ is a reduced ideal of $R^{S_2}$, so $\bD$ is a reduced equation of $\tC$ and $I_{\tC}=(\bD)$. Set-theoretically $\tC$ is the preimage of $C_X$ under $\tD\to D_X$, i.e.\ $\sqrt{I_{C_X}\cO_{\tD}}=(\bD)$, and $I_{C_X}=\cO_{D_X}\cap(\bD)$. The map $\tD\to D_X$ is an isomorphism near $\tC\setminus0$. \emph{The scheme-theoretic inverse image $I_{C_X}\cO_{\tD}$ can be strictly smaller than $(\bD)$} (Example~\ref{ex:embedded}). Only the reduced statements are used below.
\item The $S_2$-invariant vector fields on $\C^4_{x,y}$ form the free $R^{S_2}$-module $\Dlog\Delta$ with basis $\partial_{x_1},\partial_{x_2},\ \partial_+:=\partial_{y_1}+\partial_{y_2}=2\partial_{s_1}+s_1\partial_{s_2},\ \partial_\Delta:=-(y_1-y_2)(\partial_{y_1}-\partial_{y_2})=\Delta\partial_{s_2}$.
\item $\Der(\cO_{D^2})^{S_2}=\{\xi\in\Dlog\Delta:\xi(I')\subseteq I'\}/I'\Dlog\Delta=\Der(\cO_{\tD};\tC)=:\tK$, the derivations of $\cO_{\tD}$ preserving $(\bD)$.
\end{enumerate}
\end{lemma}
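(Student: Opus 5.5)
The plan is to work throughout with the Reynolds operator $\rho=\tfrac12(1+\sigma)$ on $R$, which is $R^{S_2}$-linear because we are in characteristic zero. We also use that $R=R^{S_2}\oplus R^{S_2}(y_1-y_2)$ is free of rank two over the polynomial ring $R^{S_2}=\C[x,s_1,s_2]$.

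For (a), since $I=I'R$ is generated by invariants, applying $\rho$ to $\sum r_iP_{12}+r'_iQ_{12}$ gives $I\cap R^{S_2}=\rho(I)=I'$. Taking invariants is exact, so $\cO_{D^2}^{S_2}=R^{S_2}/I'$. Because $R$ is faithfully flat over $R^{S_2}$, the pair $P_{12},Q_{12}$ is a regular sequence in $R^{S_2}$ exactly when it is one in $R$, and it is one in $R$ because $D^2$ is an ICIS of dimension $2$. This gives the complete-intersection property, and the Hilbert series follows from the degrees $(\mu-1)v$ and $k-v$ of $P_{12}$ and $Q_{12}$. Normality is Lemma~\ref{lem:D2normal}. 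For smoothness off the origin I will argue in two cases. Off the fixed curve, $D^2\to\tD$ is étale, so smoothness descends from $D^2$. On the fixed curve, the multigerm is a cross-cap, and Prop.~\ref{prop:crosscap}(e), combined with the comparison philosophy of \S\ref{sec:local0}, shows that the quotient is smooth.

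For (b), the first injection follows from $\fc=\Lambda\cO_S$ (Lemma~\ref{lem:conductor}), which gives $\cO_X\cap\Lambda\cO_S=\fc$. The second injection is $\pi_1^*$ for the finite surjection $D^2\to D$ onto the reduced $D$. Its image consists of invariants: on $D^2$ we have $p(x,y_1)=p(x,y_2)$, so $p(x,y_1)=\bar P$, and similarly for $q$ and $\bar Q$. For the normalisation claim, the map is finite, and it is birational because a generic point of $D_X$ is an $A_0^2$ point whose preimage in $\tD$ is the single unordered pair $\{s,s'\}$. The target $\cO_{\tD}$ is a normal domain by (a), so it is the normalisation, and $\cO_{D_X}\subseteq\cO_{\tD}$ is therefore a domain.

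For (d), a vector field is $S_2$-invariant iff it descends to a field on $\C^4_{x,s}$ that is tangent to the branch divisor $\{\Delta=0\}$; off $\Delta$ the quotient map is étale, and the converse lifting is a direct coordinate check. The four displayed fields are invariant and tangent to $\Delta$. Their determinant in the coordinates $(x,s_1,s_2)$ is $2\Delta$, so (S) shows they form a basis of $\Dlog\Delta$.

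For (c), the fixed points are exactly $y_1=y_2$, that is $V(\Delta)$. For reducedness, $\Delta$ is a nonzerodivisor in the domain $\cO_{\tD}$, so $R^{S_2}/(\Delta,P_{12},Q_{12})$ is a one-dimensional complete intersection, hence CM. It is reduced off the origin by the cross-cap model, where $\Delta=4y_1^2=4Y$ is a coordinate on the smooth quotient. By \ref{D4} it is then reduced. The isomorphism $\tD\to D_X$ near $\tC\setminus0$ is Prop.~\ref{prop:crosscap}(e) transported by Lemma~\ref{lem:comparison}. The radical statement and $I_{C_X}=\cO_{D_X}\cap(\bD)$ hold because both sides are radical ideals with the same zero set.

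For (e), the inclusion ``$\subseteq$'' is obtained as follows. An invariant derivation of $\cO_{D^2}$ lifts to a field on $R$ preserving $I$; averaging with $\rho$ makes the lift invariant, hence an element of $\Dlog\Delta$ preserving $I'$. Its induced derivation on $\cO_{\tD}$ preserves $(\bD)$. The reverse inclusion is the main obstacle. Given a derivation of $\cO_{\tD}$ preserving $(\bD)$, first lift it to a field $\xi$ on $\C^4_{x,s}$ with $\xi(I')\subseteq I'$. Then $\xi(\Delta)=a\Delta+bP_{12}+cQ_{12}$ for some $a,b,c$. Since $\partial_{s_2}\Delta=-4$ is a unit, the modified field $\xi+\tfrac14(bP_{12}+cQ_{12})\partial_{s_2}$ differs from $\xi$ by an element of $I'\Theta$ and lies in $\Dlog\Delta$. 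By (d) it is an invariant field on $\C^4_{x,y}$ preserving $I$, so it induces an invariant derivation of $\cO_{D^2}$. The kernel of this correspondence is $I'\Dlog\Delta$, which I will check using the freeness in (d).
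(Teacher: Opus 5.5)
Your proposal is correct and follows the paper's skeleton closely. The shared steps are: the invariant/anti-invariant (Reynolds) decomposition giving $I\cap R^{S_2}=I'$; the same two injections and the same birationality argument in (b); Saito's criterion with determinant $2\Delta$ in (d); and the same correction $\xi\mapsto\xi+\tfrac14(bP_{12}+cQ_{12})\partial_{s_2}$ in (e).

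The differences are local.
\begin{enumerate}
\item For the regular sequence you use faithful flatness of $R$ over $R^{S_2}$. The paper just notes $\dim\tD=2$ inside the four-dimensional polynomial ring $R^{S_2}$. Both arguments are fine.
\item For smoothness of $\tD$ along the fixed curve, you transport the cross-cap model. The paper argues intrinsically: $\sigma$ acts on the smooth surface $D^2\setminus0$ with one-dimensional fixed locus $F\setminus0$, hence as a reflection, so the quotient is smooth.
\item For reducedness of $(\Delta,P_{12},Q_{12})$, you use the CM property plus the local model. The paper observes that on $V(\Delta)$ one has $s_1=2y$, $s_2=y^2$, $P_{12}\mapsto p_y$ and $Q_{12}\mapsto q_y$. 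Hence $R^{S_2}/(\Delta,P_{12},Q_{12})\cong\C[x,y]/(p_y,q_y)=\cO_\Sigma$, which is reduced by (MM). This needs no CM argument and no transport.
\end{enumerate}

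Your transport route does work, but it needs one remark you omit. The ideal $(y_1-y_2)\cO_{D^2}$, and hence $(\bD)$, must be intrinsic. It is: it is the restriction of the diagonal ideal of $S\times S$ to the reduced $D^2\subset S\times S$. So under an $\cA$-equivalence $\Delta$ changes only by an invariant unit, and the model's ``$\Delta=4Y$ is a coordinate'' really applies to your $\bD$.

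Two further small steps should be made explicit.
\begin{itemize}
\item In (c), ``same zero set'' requires that every point of $\tD\setminus0$ over $C_X$ lies on $\tC$. This holds because, by (St), a cross-cap point off the origin has a single preimage: there is no $S_0A_0$ there, so the unordered pair is degenerate.
\item In (e), the kernel computation needs more than the freeness in (d). It also uses that $\bD$ is a nonzerodivisor in the domain $\cO_{\tD}$: from $\Delta c_4\in I'$ you conclude $c_4\in I'$. You already have this fact from (c).
\end{itemize}
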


\begin{proof}
(a) If $u=\alpha P_{12}+\beta Q_{12}\in I$ is invariant ($\alpha,\beta\in R$), write $\alpha=\alpha^++(y_1-y_2)\alpha^-$ etc.. The anti-invariant part $(y_1-y_2)(\alpha^-P_{12}+\beta^-Q_{12})$ must vanish, so $u=\alpha^+P_{12}+\beta^+Q_{12}\in I'$. $\dim\tD=\dim D^2=2$ (the quotient map is finite), so $P_{12},Q_{12}$ is a regular sequence in the $4$-dimensional polynomial ring $R^{S_2}$ \ref{D4}. The Hilbert series follows. Normal domain: Lemma~\ref{lem:D2normal}. Off the origin $D^2$ is smooth and $\sigma$ acts on the smooth surface with smooth fixed curve $F\setminus0$ (a reflection in suitable local coordinates), so the quotient is smooth off $0$.

(b) The first arrow is induced by $\cO_X\subseteq\cO_S$ and is injective because $\fc=\Lambda\cO_S=\cO_X\cap\Lambda\cO_S$ (Lemma~\ref{lem:conductor}: $\Lambda\cO_S\subseteq\cO_X$). The second is induced by $\pi_1^*:\cO_S\to\cO_{D^2}$, $(x,y_1)\mapsto(x,y_1)$, which kills $\Lambda$ (Lemma~\ref{lem:Lambda}(c)). It is injective because $D$ is reduced (Lemma~\ref{lem:Lambda-reduced}) and $\pi_1:D^2\to D$ is surjective, so a function on $D$ vanishing on $D^2$ vanishes on $D$. The composite sends $X_i,Y,Z$ to the classes of $x_i,p(x,y_1),q(x,y_1)$, which on $D^2$ equal $x_i,\bar P,\bar Q$ (since $p(x,y_1)=p(x,y_2)$ and $q(x,y_1)=q(x,y_2)$ on $D^2$). These are $S_2$-invariant, so the image lies in $\cO_{\tD}$. The map $\cO_{D_X}\to\cO_{\tD}$ is finite (integral extension) and generically one-to-one: a generic point of $D_X$ is an $A_0^2$ point with exactly one unordered pair $\{s,s'\}$ over it, so $\tD\to D_X$ is birational; $\cO_{\tD}$ is a normal domain (a). Hence $\cO_{D_X}\subseteq\cO_{\tD}$ is the normalisation of the reduced ring $\cO_{D_X}$ (Lemma~\ref{lem:conductor}), and $\cO_{D_X}$, a subring of a domain, is a domain.

(c) On $V(\Delta)$, $s_2=s_1^2/4$ and $s_1=2y$. Then $P_{12}(x,2y,y^2)=p[12](x,y,y)=p_y(x,y)$, $Q_{12}\mapsto q_y$, so $R^{S_2}/(\Delta,P_{12},Q_{12})\cong\C[x,y]/(p_y,q_y)=\cO_\Sigma$, reduced (MM: $\Sigma$ is an ICIS curve). The image of $F$ in $\tD$ is $V(\bD)$ and maps onto $f(\Sigma)=C_X$. Conversely a point of $\tD\setminus0$ over $C_X$ is an unordered pair $\{s,s'\}$ of preimages of a cross-cap point $P\in C_X\setminus0$, and by (St) $f^{-1}(P)$ is a single non-immersive point, so $s=s'$ and the pair lies on $\tC$. Hence $\tC$ is the set-theoretic preimage of $C_X$, and $\sqrt{I_{C_X}\cO_{\tD}}=I_{\tC}=(\bD)$ (both sides are the reduced ideal of $\tC$ in the reduced ring $\cO_{\tD}$). $I_{C_X}=\cO_{D_X}\cap(\bD)$: both are the functions on $D_X$ vanishing on $C_X$, i.e.\ on the image of $\tC$. Near a point of $C_X\setminus0$ (type $S_0$), $\tD\to D_X$ is an isomorphism by Prop.~\ref{prop:crosscap}(e) and the intrinsic nature of both sides.

(d) A field $\xi=A_1\partial_{x_1}+A_2\partial_{x_2}+A_3\partial_{y_1}+A_4\partial_{y_2}$ is invariant iff $\sigma(A_1)=A_1$, $\sigma(A_2)=A_2$, $\sigma(A_3)=A_4$. Write $A_3=\alpha+(y_1-y_2)\beta$ with $\alpha,\beta\in R^{S_2}$. Then $A_4=\alpha-(y_1-y_2)\beta$ and $\xi=A_1\partial_{x_1}+A_2\partial_{x_2}+\alpha\partial_+-\beta\partial_\Delta$, uniquely. In $s$-coordinates $\partial_{y_i}=\partial_{s_1}+y_{3-i}\partial_{s_2}$, giving the stated expressions; $\partial_+(\Delta)=4s_1-4s_1=0$, $\partial_\Delta(\Delta)=-4\Delta$, so these fields are tangent to $\Delta$, and $\det(\partial_{x_1},\partial_{x_2},\partial_+,\partial_\Delta)=2\Delta$, so by (S) they form a basis of $\Dlog\Delta$ (directly: a field tangent to $V(\Delta)$ has $s_2$-component divisible by $\Delta$ after subtracting a multiple of $\partial_+$, because $\Delta$ is reduced and $\partial_{s_2}\Delta=-4$ is a unit).

(e) Taking $S_2$-invariants is exact in characteristic $0$, so $\Der(\cO_{D^2})^{S_2}=\Dlog I^{S_2}/(I\Theta)^{S_2}$, where $\Dlog I=\{\xi:\xi(I)\subseteq I\}$. $\Dlog I^{S_2}=\{\xi\text{ invariant}:\xi(P_{12}),\xi(Q_{12})\in I\}$, and for invariant $\xi$ the values $\xi(P_{12}),\xi(Q_{12})$ are invariant, so lie in $I\cap R^{S_2}=I'$. $(I\Theta)^{S_2}=P_{12}\Theta^{S_2}+Q_{12}\Theta^{S_2}=I'\Dlog\Delta$ by averaging ($\tfrac12(\eta+\sigma\eta)$ of $P_{12}\zeta+Q_{12}\zeta'$ is $P_{12}\tfrac12(\zeta+\sigma\zeta)+\dots$). This proves the first equality. For the second: $\Der(\cO_{\tD})=\{\xi\in\Theta_{x,s}:\xi(I')\subseteq I'\}/I'\Theta_{x,s}$, and $\Dlog\Delta\cap I'\Theta_{x,s}=I'\Dlog\Delta$ (write $\xi=A_1\partial_{x_1}+A_2\partial_{x_2}+B_1\partial_++B_2\partial_\Delta$; $\xi\in I'\Theta$ forces $A_1,A_2,2B_1\in I'$ and $B_1s_1+B_2\Delta\in I'$, hence $B_2\Delta\in I'$, hence $B_2\in I'$ as $\Delta\notin I'$ and $I'$ is prime), so $\tK\hookrightarrow\Der(\cO_{\tD})$ with image the derivations induced by fields tangent to $\Delta$. Finally a derivation $\bar\xi$ of $\cO_{\tD}$ with $\bar\xi(\bD)\in(\bD)$ lifts to $\xi\in\Theta_{x,s}$ with $\xi(\Delta)=u\Delta+i$, $i\in I'$; replacing $\xi$ by $\xi+\tfrac i4\partial_{s_2}$ (which changes $\xi$ by an element of $I'\Theta$ and changes $\xi(\Delta)$ by $\tfrac i4\cdot(-4)=-i$) gives a lift tangent to $\Delta$. (Here we used that $\bD$ is a nonzerodivisor of $\cO_{\tD}$, which is a domain, so ``$\bar\xi(\bD)\in(\bD)$'' is meaningful without ambiguity.)
\end{proof}

\begin{remark}[Euler fields]\label{rem:Eulers} The Euler derivation $E$ of $\cO_{\tD}$ is induced by the field $\sum a_ix_i\partial_{x_i}+vs_1\partial_{s_1}+2vs_2\partial_{s_2}$ on $\C^4_{x,s}$. It restricts to the Euler derivation of $\cO_{D_X}$ (both multiply a homogeneous element by its degree), which is the derivation induced by the target Euler field $E$ of \S\ref{sec:setting} via Lemma~\ref{lem:Phi}. By Lemma~\ref{lem:seidenberg-graded} these are the unique extensions of one another, so the single letter $E$ is unambiguous.
\end{remark}

\begin{lemma}\label{lem:Der-DX-in-K}
$\Der(\cO_{D_X};C_X)=\{\delta\in\tK:\ \delta(\cO_{D_X})\subseteq\cO_{D_X}\}$, where $\Der(\cO_{D_X})\hookrightarrow\Der(\cO_{\tD})$ is the Seidenberg extension \textup{(Se)}.
\end{lemma}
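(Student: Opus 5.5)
We prove the two inclusions separately, identifying each $\delta\in\Der(\cO_{D_X})$ with its Seidenberg extension $\bar\delta\in\Der(\cO_{\tD})$. By Lemma~\ref{lem:normalisation}(b), $\cO_{D_X}\subseteq\cO_{\tD}$ is the normalisation of the noetherian domain $\cO_{D_X}$, so (Se) applies and $\delta\mapsto\bar\delta$ is injective. The right-hand side is then simply the set of $\bar\delta\in\tK$ that restrict to derivations of $\cO_{D_X}$. By Lemma~\ref{lem:normalisation}(e), $\tK$ consists of the derivations of $\cO_{\tD}$ preserving $(\bD)$. The statement therefore reduces to:
\[
\delta(I_{C_X})\subseteq I_{C_X}\iff\bar\delta(\bD)\in(\bD).
\]

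\emph{($\supseteq$).} Let $\bar\delta\in\tK$ preserve $\cO_{D_X}$. For $g\in I_{C_X}=\cO_{D_X}\cap(\bD)$ (Lemma~\ref{lem:normalisation}(c)), we have $\bar\delta(g)\in\cO_{D_X}$. Since $\bar\delta$ preserves the ideal $(\bD)$ and $g\in(\bD)$, also $\bar\delta(g)\in(\bD)$. Hence $\bar\delta(g)\in\cO_{D_X}\cap(\bD)=I_{C_X}$.

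\emph{($\subseteq$).} Let $\delta$ preserve $I_{C_X}$. We must show that $\bar\delta$ preserves $(\bD)$, which is the reduced ideal $I_{\tC}$. Equivalently, $\bar\delta(h)$ vanishes on $\tC$ for every $h\in(\bD)$. Since $\cO_{\tD}$ is reduced and $(\bD)$ is the radical ideal of $\tC$, it suffices to check this at points of $\tC\setminus0$, because $\tC$ is a reduced curve whose points off the vertex are dense in it. Near such a point $\tD\to D_X$ is an isomorphism (Lemma~\ref{lem:normalisation}(c)). There $\bar\delta$ coincides with $\delta$, computed in the analytic local ring, and $(\bD)$ coincides locally with the reduced ideal of $C_X$. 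We use that Seidenberg extension commutes with the flat base change to analytic local rings: uniqueness of extension holds over the localisation and completion. Then $\delta$ preserving $I_{C_X}$ implies that $\delta$ preserves the localised reduced ideal, since radicals commute with localisation. So $\bar\delta(h)$ lies locally in $I_{\tC}$ at every point of $\tC\setminus0$, as required.

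\emph{Main obstacle.} The delicate step is the passage from the global hypothesis $\delta(I_{C_X})\subseteq I_{C_X}$ to the local statement at points of $\tC\setminus0$. One has to make sure that the germ of $\bar\delta$ at a point of $\tD$ is determined by the germ of $\delta$ through the local isomorphism. This holds because a derivation of $\cO_{\tD}$ is determined on the fraction field, which is shared with $\cO_{D_X}$, so no global-versus-local issue arises there. One also has to keep the scheme-theoretic caveat of Lemma~\ref{lem:normalisation}(c) in view: $I_{C_X}\cO_{\tD}$ may be strictly smaller than $(\bD)$, so we argue only with reduced ideals and never with the extended ideal.
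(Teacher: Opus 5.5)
Your proposal is correct and follows the paper's proof essentially step for step. You use (Se) and Lemma~\ref{lem:normalisation}(e) to reduce to $\delta(I_{C_X})\subseteq I_{C_X}\iff\bar\delta(\bD)\in(\bD)$, and you get one direction from $I_{C_X}=\cO_{D_X}\cap(\bD)$. For the other you compare reduced ideals at points of $\tC\setminus0$, where $\tD\to D_X$ is a local isomorphism, and then use density of $\tC\setminus0$ in $\tC$. Your detour through analytic base change is harmless but not needed: Zariski localisation at a point of $C_X\setminus0$, where the conductor $(\bL)$ is the unit ideal, already identifies the two local rings and the two reduced ideals.
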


\begin{proof}
$\Der(\cO_{D_X})=\{\delta\in\Der(\cO_{\tD}):\delta(\cO_{D_X})\subseteq\cO_{D_X}\}$ by (Se) (applied to the domain $\cO_{D_X}$ with normalisation $\cO_{\tD}$, Lemma~\ref{lem:normalisation}(b)) and injectivity of restriction. It remains to see that for such $\delta$, $\delta(I_{C_X})\subseteq I_{C_X}$ iff $\delta(\bD)\in(\bD)$. We use only the reduced ideals $I_{C_X}=\cO_{D_X}\cap(\bD)$ and $I_{\tC}=(\bD)$ of Lemma~\ref{lem:normalisation}(c). If $\delta$ preserves $(\bD)$ then it preserves $\cO_{D_X}\cap(\bD)=I_{C_X}$. Conversely suppose $\delta$ preserves $I_{C_X}$, and let $g\in(\bD)$. Near each point of $\tC\setminus0$ the rings $\cO_{D_X}$ and $\cO_{\tD}$ coincide analytically and $I_{C_X}$, $(\bD)$ generate the same (reduced) local ideal of the curve. So $\delta(g)$ vanishes on $\tC\setminus0$, hence on its closure $\tC$ (every component of $\tC$ meets $\tC\setminus0$ densely, being a positive-dimensional cone), hence lies in the reduced ideal $(\bD)$. No equality of scheme-theoretic inverse images is needed.
\end{proof}

\subsection{\texorpdfstring{The conductor of the normalisation of $D_X$}{The conductor of the normalisation of the double-point surface}}\label{sec:conductorD}

\begin{lemma}\label{lem:conductorD}
$H^0_\fm(\cO_{\tD}/\cO_{D_X})=0$, and the conductor of $\cO_{D_X}\subset\cO_{\tD}$ is $\fc_D=\bL\,\cO_{\tD}\cong\cO_{\tD}(-\deg\Lambda_3)$.  If the triple curve is empty, $\fc_D=\cO_{\tD}$, i.e.\ $\cO_{D_X}=\cO_{\tD}$ is normal.
\end{lemma}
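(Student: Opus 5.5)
The proof follows the pattern of Lemmas~\ref{lem:seminormal} and~\ref{lem:conductor} one level down, with $\cO_{D_X}\subset\cO_{\tD}$ in place of $\cO_X\subset\cO_S$.

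\emph{Step 1: vanishing of $H^0_\fm$.} By Lemma~\ref{lem:conductor}, $\cO_{D_X}$ is Cohen--Macaulay of dimension $2$. By Lemma~\ref{lem:normalisation}(a), $\cO_{\tD}$ is a complete intersection, hence a CM ring of dimension $2$, and it is finite over $\cO_{D_X}$. So both are CM $\cO_{D_X}$-modules of dimension $2$ (\S\ref{sec:depth}). Apply \ref{D1} to
\[
0\to\cO_{D_X}\to\cO_{\tD}\to\cO_{\tD}/\cO_{D_X}\to0 .
\]
It gives $\depth(\cO_{\tD}/\cO_{D_X})\ge\min(2-1,2)=1$, hence $H^0_\fm(\cO_{\tD}/\cO_{D_X})=0$. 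Consequently the seminormality criterion of Lemma~\ref{lem:seminormal} transfers verbatim. Using Lemma~\ref{lem:comparison} with $M=\cO_{D_X}\subseteq N=\cO_{D_X}+\cO_{D_X}h$, an element $h\in\cO_{\tD}$ lies in $\cO_{D_X}$ as soon as its germ lies in $\cO_{D_X,P}^{an}$ at every $P\in D_X\setminus0$.

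\emph{Step 2: the conductor.} If $h\in\cO_{\tD}$ lies in the local conductor at every point of $D_X\setminus0$, then for every $g\in\cO_{\tD}$ the product $hg$ lies in $\cO_{D_X}$ locally on the punctured cone, hence globally by Step~1. So $h\in\fc_D$; the converse inclusion is trivial. It remains to identify the local conductors using the table in \S\ref{sec:localsummary}.
\begin{itemize}
\item At $A_0^2$ and $S_0$ points, $\tD\to D_X$ is an isomorphism (Prop.~\ref{prop:nc}(d), \ref{prop:crosscap}(e)), so the local conductor is the unit ideal.
\item At $A_0^3$ points, Prop.~\ref{prop:nc}(e) gives the local conductor as the reduced ideal of $T$ on each plane $\Pi_i$. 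Its preimage in the normalisation is $\tT_X$, because the three planes are the three unordered pairs from the three preimages, and these are exactly the points of $\tT_X$ over $T$.
\end{itemize}
One must check that the identification of the normalisation of $\cO_{D_X}$ at an $A_0^3$ point with $\bigsqcup\Pi_i$ is compatible with the global $\tD$. This holds because normalisation commutes with $R_P\to\cO_P^{an}$, as explained after Lemma~\ref{lem:comparison}. Hence $\fc_D$ consists of the functions on $\tD$ vanishing on $\tT_X\setminus0$. Since $\cO_{\tD}$ is reduced and each component of the cone $\tT_X$ is the closure of its part off the vertex, $\fc_D=I(\tT_X)$. By Lemma~\ref{lem:Lambda3}, $I(\tT_X)=(\bL)$. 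Since $\cO_{\tD}$ is a domain and $\bL\ne0$, we get $\bL\cO_{\tD}\cong\cO_{\tD}(-\deg\Lambda_3)$.

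\emph{Step 3: the empty case.} If the triple curve is empty, $\bL$ is a unit (Convention~\ref{conv:empty}). Then $\fc_D=\cO_{\tD}$, so $1\in\fc_D$ and $\cO_{D_X}=\cO_{\tD}$ is normal.

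The main obstacle is the local $A_0^3$ computation in Step~2. There I must carefully match the local branches $\Pi_i$ with the germs of $\tD$ over $P$, namely the three points $\{s_a,s_b\}$, and check that $(\bL)$ restricts on each to the reduced ideal of $T$. Lemma~\ref{lem:Lambda3} supplies exactly this, via the unramifiedness of $D^2\to\tD$ near $\tT\setminus0$.
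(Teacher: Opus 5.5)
Your proposal is correct and follows essentially the same route as the paper. You use the depth lemma to get $H^0_\fm(\cO_{\tD}/\cO_{D_X})=0$, then read off the local conductors (unit ideal except at $A_0^3$ points, where Prop.~\ref{prop:nc}(e) gives the reduced ideal of the triple line on each plane), which yields $\fc_D=I(\tT_X)=(\bL)$ by Lemma~\ref{lem:Lambda3}.
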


\begin{proof}
Both rings are CM $\cO_{D_X}$-modules of dimension $2$ (Lemmas~\ref{lem:conductor} and \ref{lem:normalisation}(a) with \S\ref{sec:depth}), so \ref{D1} gives $\depth(\cO_{\tD}/\cO_{D_X})\ge1$, i.e.\ $H^0_\fm(\cO_{\tD}/\cO_{D_X})=0$.  Hence, as in Lemma~\ref{lem:conductor}: if $h\in\cO_{\tD}$ lies in the local conductor at every point of $\tD\setminus0$, then for every $g\in\cO_{\tD}$ the class of $hg$ in $\cO_{\tD}/\cO_{D_X}$ is supported at the origin (Lemma~\ref{lem:comparison}), hence zero, so $hg\in\cO_{D_X}$ and $h\in\fc_D$. Conversely elements of $\fc_D$ lie in the local conductors. By \S\ref{sec:localsummary} the local conductor is the unit ideal except over $T_X\setminus0$, where (Prop.~\ref{prop:nc}(e)) it is the reduced ideal of the triple line in each plane, i.e.\ the reduced ideal of $\tT_X$. Hence $\fc_D=I(\tT_X)$, and by Lemma~\ref{lem:Lambda3} this is $(\bL)$ (the unit ideal in the empty case).
\end{proof}

\subsection{Freeness}\label{sec:freeness}

Let $\vol=dx_1\wedge dx_2\wedge ds_1\wedge ds_2$ and define, on $\C^4_{x,s}$, the vector field $\zeta$ and the bivector $\varepsilon$ by
\[
\iota_\zeta\vol=\tfrac12\,d\Delta\wedge dP_{12}\wedge dQ_{12},\qquad \iota_\varepsilon\vol=dP_{12}\wedge dQ_{12}
\]
(i.e.\ $\zeta=\tfrac12\star(d\Delta\wedge dP_{12}\wedge dQ_{12})$, the $4$-dimensional cross product of the three gradients: $\zeta_l=\tfrac12(-1)^{l+1}\det$ of the $3\times3$ minor of $(\nabla\Delta;\nabla P_{12};\nabla Q_{12})$ omitting column $l$; and $\varepsilon=\star(dP_{12}\wedge dQ_{12})$).

\begin{proposition}[freeness]\label{prop:freeness}
\begin{enumerate}[label=\textup{(\alph*)}]
\item $\zeta(\Delta)=\zeta(P_{12})=\zeta(Q_{12})=0$; $\zeta$ is weighted homogeneous of degree $e_\zeta=\deg\Delta+\deg P_{12}+\deg Q_{12}-(a_1+a_2+3v)=k+(\mu-3)v-a_1-a_2$; $\zeta\in\Dlog\Delta$ and its class lies in $\tK$.
\item  On $\tD$ (i.e.\ modulo $I'$): $E\wedge\zeta=\pm v\,\bD\,\varepsilon$, the sign depending only on the orientation convention for $\iota$.
\item $\varepsilon$ restricts to a nowhere-vanishing section of $\wedge^2T(\tD\setminus0)$.
\item $\tK=\cO_{\tD}E\oplus\cO_{\tD}\zeta$ is free of rank $2$; $\Hilb\tK=(1+t^{e_\zeta})\Hilb\cO_{\tD}$.
\end{enumerate}
\end{proposition}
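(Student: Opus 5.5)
(a) is formal. Since $\iota_\zeta\vol=\tfrac12\,d\Delta\wedge dP_{12}\wedge dQ_{12}$, for any function $g$ we have $\zeta(g)\,\vol=dg\wedge\iota_\zeta\vol$ up to sign. This gives $\zeta(g)=\pm\tfrac12\,dg\wedge d\Delta\wedge dP_{12}\wedge dQ_{12}/\vol$, which vanishes for $g\in\{\Delta,P_{12},Q_{12}\}$ because a repeated $1$-form appears. Weighted homogeneity follows from the homogeneity of the three forms. For the degree, $\vol$ has degree $a_1+a_2+3v$ and $\zeta$ has degree $\deg\Delta+\deg P_{12}+\deg Q_{12}-\deg\vol$. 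With $\deg\Delta=2v$, $\deg P_{12}=(\mu-1)v$ and $\deg Q_{12}=k-v$ this gives $e_\zeta=k+(\mu-3)v-a_1-a_2$. Since $\zeta(\Delta)=0$ we get $\zeta\in\Dlog\Delta$, and since $\zeta(I')\subseteq I'$ its class lies in $\tK$ by Lemma~\ref{lem:normalisation}(e).

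For (b) I will use the Euler identity. Write $E$ for the Euler field on $\C^4_{x,s}$ (Remark~\ref{rem:Eulers}), so $E(g)=(\deg g)\,g$. Consider the $4$-vector $E\wedge\zeta$, identified via $\vol$, and evaluate it against $dP_{12}\wedge dQ_{12}$ and the other $2$-forms. A cleaner route is linear algebra: at a point, $\iota_{E\wedge\zeta}\vol=\iota_E\iota_\zeta\vol$ up to sign, which equals $\tfrac12\iota_E(d\Delta\wedge dP_{12}\wedge dQ_{12})$. Expanding gives
\[
\tfrac12\big(E(\Delta)\,dP_{12}\wedge dQ_{12}-E(P_{12})\,d\Delta\wedge dQ_{12}+E(Q_{12})\,d\Delta\wedge dP_{12}\big).
\]
Modulo $I'$ the last two terms vanish, because $E(P_{12})=\deg P_{12}\cdot P_{12}$ and similarly for $Q_{12}$. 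The first term is $\tfrac12\cdot2v\Delta\,dP_{12}\wedge dQ_{12}=v\Delta\,\iota_\varepsilon\vol$. Hence $E\wedge\zeta=\pm v\bD\varepsilon$ on $\tD$.

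For (c), $\varepsilon$ vanishes exactly where $dP_{12},dQ_{12}$ are dependent, i.e.\ at singular points of the complete intersection $\tD$. By Lemma~\ref{lem:normalisation}(a) these lie only at the origin. On $\tD\setminus0$, $\varepsilon$ is dual to the conormal $2$-form, so it is tangent: it lies in $\wedge^2T\tD$, being the wedge of a basis of the kernel of $(dP_{12},dQ_{12})$ up to scaling. It is therefore a nowhere-vanishing section.

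(d) is the main step. First, $E$ and $\zeta$ are independent: by (b) and (c), $E\wedge\zeta\neq0$ off $\tC$ and $0$, and $\cO_{\tD}$ is a domain, so the sum is direct and $M:=\cO_{\tD}E\oplus\cO_{\tD}\zeta$ is free with $M\subseteq\tK$ ($E$ preserves $(\bD)$ since $\bD$ is homogeneous). Its depth is $2$ because $\cO_{\tD}$ is CM of dimension $2$, and $\tK$ sits inside the free module $\cO_{\tD}^4$, so it has depth $\ge1$. By Lemma~\ref{lem:depth2}(c) it therefore suffices to show $M_P=\tK_P$ at every point $P\in\tD\setminus0$.

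Off $\tC$, $\tK_P=\Der(\cO_{\tD,P})$ is the free tangent module of rank $2$ of the smooth surface. Since $E\wedge\zeta=\pm v\bD\varepsilon$ with $\bD$ a unit there and $\varepsilon$ a generator of $\wedge^2T$, the pair $E,\zeta$ is a basis. At $P\in\tC\setminus0$, $\tC$ is a smooth curve in a smooth surface with reduced equation $\bD$ (Lemma~\ref{lem:normalisation}(c)), and $\tK_P$ is its logarithmic module. This is the Saito criterion (S) in dimension $2$, which I would verify directly. Choose local coordinates $(u,w)$ with $\bD=u$, so $\Dlog$ is free on $u\partial_u,\partial_w$. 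The determinant of $E,\zeta$ in this basis is $\pm v$ times a unit, by $E\wedge\zeta=\pm v\bD\varepsilon$ and $u\partial_u\wedge\partial_w=u\,\partial_u\wedge\partial_w$, so they generate. This local step, especially checking that $\varepsilon$ genuinely generates $\wedge^2T$ compatibly with the chosen coordinates, is where care is needed.

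The Hilbert series then follows from $\deg E=0$ and $\deg\zeta=e_\zeta$, with the convention that a field of degree $e$ contributes a shift giving the factor $t^{e}$, consistent with $\Theta$ conventions.
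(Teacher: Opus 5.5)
Your proposal is correct and follows the paper's proof essentially step for step. You use the same contraction identities for (a)--(c), the same local-basis check on $\tD\setminus0$ (including the direct two-dimensional Saito argument in coordinates $(u,w)$ along $\tC$), and the same depth-two extension across the vertex via Lemma~\ref{lem:depth2}(c). The point you flag as delicate is exactly what (c) provides: $\varepsilon$ is a unit multiple of $\partial_u\wedge\partial_w$ in any local chart.
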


\begin{proof}
(a) $\zeta(g)\,\vol=dg\wedge\iota_\zeta\vol=\tfrac12dg\wedge d\Delta\wedge dP_{12}\wedge dQ_{12}=0$ for $g\in\{\Delta,P_{12},Q_{12}\}$ (a repeated factor). The degree count is the weight of the $3$-form minus that of $\vol$. Since $\zeta(\Delta)=0$, $\zeta\in\Dlog\Delta$. Since $\zeta(I')=0$, its class is in $\tK$.

(b) For a $3$-form $\omega=\alpha\wedge\beta\wedge\gamma$, $\iota_E\omega=\alpha(E)\,\beta\wedge\gamma-\beta(E)\,\alpha\wedge\gamma+\gamma(E)\,\alpha\wedge\beta$. With $(\alpha,\beta,\gamma)=(d\Delta,dP_{12},dQ_{12})$ and $E(\Delta)=2v\Delta$, $E(P_{12})=(\mu-1)vP_{12}$, $E(Q_{12})=(k-v)Q_{12}$ (Euler relations. Here $E$ is the field of Remark~\ref{rem:Eulers} on $\C^4_{x,s}$):
\[
\iota_E\iota_\zeta\vol=\tfrac12\iota_E(d\Delta\wedge dP_{12}\wedge dQ_{12})\equiv v\Delta\,dP_{12}\wedge dQ_{12}=v\Delta\,\iota_\varepsilon\vol\pmod{I'\Omega^2}.
\]
Since $\iota_E\iota_\zeta\vol=\pm\iota_{E\wedge\zeta}\vol$ (the sign is the convention) and $B\mapsto\iota_B\vol$ is an $\cO$-linear isomorphism $\wedge^2\Theta\to\Omega^2$, we get $E\wedge\zeta\equiv\pm v\Delta\varepsilon$ modulo $I'$. Only the fact that $E\wedge\zeta$ is a unit multiple of $\bD\varepsilon$ is used.

(c) At a point $z\in\tD\setminus0$ the differentials $dP_{12},dQ_{12}$ are independent (smoothness of the complete intersection), so $\varepsilon(z)\ne0$; and for any $1$-form $\alpha\in\operatorname{span}(dP_{12},dQ_{12})$, $(\iota_\alpha\varepsilon)\,\vol=\pm\alpha\wedge dP_{12}\wedge dQ_{12}=0$, so $\varepsilon(z)$ is annihilated by $dP_{12},dQ_{12}$, i.e.\ lies in $\wedge^2(\ker dP_{12}\cap\ker dQ_{12})=\wedge^2T_z\tD$, a line. So $\varepsilon$ is a nowhere-vanishing section of $\wedge^2T(\tD\setminus0)$.

(d) $E,\zeta\in\tK\subseteq\Dlog{\tC}$ on the smooth surface $\tD\setminus0$. \emph{Local basis on the punctured surface.} Let $z\in\tD\setminus0$. If $z\notin\tC$, then by (b),(c) $\det(E,\zeta)$ in any local chart is a unit, so $E,\zeta$ form a local basis of the tangent sheaf $=\Dlog{\tC}$ there. If $z\in\tC\setminus0$, then $\tC$ is smooth at $z$ (it is the reduced curve $\cong\Sigma\setminus0$, an ICIS curve, smooth off the origin), so there are local coordinates $(u,w)$ on $\tD$ at $z$ with $\bD=(\text{unit})\cdot u$, and $\Dlog{\tC}_z=\cO_z\,u\partial_u\oplus\cO_z\,\partial_w$ (a field $A\partial_u+B\partial_w$ preserves $(u)$ iff $u\mid A$). Write $E$ and $\zeta$ in the basis $(u\partial_u,\partial_w)$: the coefficient matrix $C$ satisfies $\det(E,\zeta)_{(\partial_u,\partial_w)}=u\cdot\det C$. By (b),(c) the left side is a unit times $\bD$, i.e.\ a unit times $u$, so $\det C$ is a unit and $E,\zeta$ form a basis of $\Dlog{\tC}_z$. (This is the two-line proof of (S) in the case of a smooth divisor on a smooth surface.) Hence $\cO E\oplus\cO\zeta\to\tK$ is injective (two fields independent on a dense open set are independent over the domain $\cO_{\tD}$) and an isomorphism of sheaves on $U=\tD\setminus0$.

\emph{Extension across the vertex.} The proposed module $M=\cO_{\tD}E\oplus\cO_{\tD}\zeta$ is free over the two-dimensional Cohen--Macaulay ring $\cO_{\tD}$, so has depth two. The module $\tK$ embeds in $\cO_{\tD}^4$, hence has depth at least one. Lemma~\ref{lem:depth2}(c) gives $M=\tK$. The Hilbert series follows from $\deg E=0$ and $\deg\zeta=e_\zeta$.
\end{proof}

\begin{remark}\label{rem:zeta-y}
In $y$-coordinates, $\zeta$ is the invariant combination $H_{\text{omit }y_1}+H_{\text{omit }y_2}$ of the Hamiltonian (cross-product) fields of the ICIS $D^2$ ($\sigma(H_{\text{omit }y_1})=H_{\text{omit }y_2}$, $\sigma(H_{\text{omit }x_j})=-H_{\text{omit }x_j}$), because $\vol_s=(y_1-y_2)\vol_y$ and $\tfrac12d\Delta=(y_1-y_2)(dy_1-dy_2)$. The degree count in (a) alone forces the shape of (b): $E\wedge\zeta=\varphi\varepsilon$ with $\deg\varphi=e_\zeta-\deg\varepsilon=2v=\deg\Delta$, and $\varphi$ vanishes on $\tC$ (both $E$ and $\zeta$ are tangent to the curve there), so $\varphi=c\bD$ with $c\in\C$.
\end{remark}

\subsection{Descent}\label{sec:descent}

\begin{proposition}[descent]\label{prop:descent}
$\Der(\cO_{D_X};C_X)=\cO_{D_X}E+\bL\tK=\cO_{D_X}E\oplus\bL\cO_{\tD}\zeta\cong\cO_{D_X}\oplus\cO_{\tD}(-d_0)$, with $d_0=\deg\Lambda_3+e_\zeta=(\mu-1)(k-v)-a_1-a_2$ as in Theorem~B.
\end{proposition}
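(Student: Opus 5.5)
Throughout, I will view $\Der(\cO_{D_X};C_X)$ inside $\tK=\cO_{\tD}E\oplus\cO_{\tD}\zeta$ via Lemma~\ref{lem:Der-DX-in-K}: it consists of those $\delta=gE+h\zeta\in\tK$ ($g,h\in\cO_{\tD}$) that preserve $\cO_{D_X}$. The plan is to prove the two inclusions $\supseteq$ and $\subseteq$ for $M:=\cO_{D_X}E+\bL\tK$. After that I will identify $M$ with the direct sum $\cO_{D_X}E\oplus\bL\cO_{\tD}\zeta$ and read off the isomorphism type.

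\textbf{Inclusion $\supseteq$.} $E$ preserves $\cO_{D_X}$ (Remark~\ref{rem:Eulers}), so $\cO_{D_X}E$ is contained in the left side. For $\delta\in\tK$ and $h\in\cO_{D_X}$, the element $\bL\delta(h)$ lies in $\bL\cO_{\tD}=\fc_D\subseteq\cO_{D_X}$ by Lemma~\ref{lem:conductorD}. Hence $\bL\tK$ preserves $\cO_{D_X}$. Since $\bL\tK\subseteq\tK$, these derivations also preserve $(\bD)$.

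\textbf{Direct sum and splitting.} We have $\bL\tK=\bL\cO_{\tD}E\oplus\bL\cO_{\tD}\zeta$ and $\bL\cO_{\tD}E\subseteq\fc_D E\subseteq\cO_{D_X}E$. Therefore $M=\cO_{D_X}E+\bL\cO_{\tD}\zeta$. The sum is direct because $E,\zeta$ is a basis of $\tK$ (Proposition~\ref{prop:freeness}(d)). The grading shifts are $\bL\cO_{\tD}\cong\cO_{\tD}(-\deg\Lambda_3)$, which gives $\cO_{\tD}(-d_0)$ for the $\zeta$-summand, and $\cO_{D_X}E\cong\cO_{D_X}$ since $\deg E=0$.

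\textbf{Inclusion $\subseteq$: reduction to the punctured cone.} I will use Lemma~\ref{lem:depth2}(c). The module $M\cong\cO_{D_X}\oplus\cO_{\tD}(-d_0)$ has depth $2$, because $\cO_{D_X}$ and $\cO_{\tD}$ are CM of dimension $2$ over $\cO_{D_X}$ (Lemma~\ref{lem:conductor}, Lemma~\ref{lem:normalisation}(a), and depth over a finite extension). The target $N=\Der(\cO_{D_X};C_X)\subseteq\tK\cong\cO_{\tD}^2$ has depth $\ge1$. So it suffices to prove $M_P=N_P$ analytically at each $P\in D_X\setminus0$, which also covers non-homogeneous elements through Lemma~\ref{lem:comparison}.

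\textbf{Local checks.} Away from $T_X$ we have $\bL$ a unit and $\cO_{D_X}=\cO_{\tD}$ locally (Lemma~\ref{lem:conductorD}). Then $M_P=\tK_P$, and $N_P=\tK_P$ as well, since every element of $\tK$ preserves $\cO_{\tD}=\cO_{D_X}$. The real content is at $A_0^3$ points, and this is the step I expect to be the main obstacle. In the model of Prop.~\ref{prop:nc}(e), $\tD$ is $\bigsqcup\Pi_i$ with $\cO_{\Pi_i}=\C\{u_i,w\}$, and $\bL=(\text{unit})\cdot u_i$ on $\Pi_i$. The relevant equation near $P$ is $\bD$, which is a unit because there are no cross-caps on $T_X\setminus0$. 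Hence $\tK_P=\Der(\cO_{\tD})_P$, and a local basis is $E$ together with a transverse field. Near $P$ the triple curve is an orbit closure, so by Lemma~\ref{lem:orbit} $E$ is tangent to $T$ and can serve as $\partial_w$ up to a unit. The elements of $\Der(\cO_{D_X})_P$ are then those $\delta$ with $\delta(u_i)\in u_i\C\{u_i,w\}$ and $\delta(w)\in\cO_{D_X}$, by Prop.~\ref{prop:nc}(e). Writing $\delta=gE+h\zeta$ on each plane, I will argue as follows. Restricting to $T$ and using that $E(P)$ spans $T_PT$ while $\zeta$ is transverse on each plane (because $E\wedge\zeta$ is a unit multiple of $\varepsilon$), the condition $\delta(u_i)|_T=0$ forces $h|_T=0$ on each plane, i.e.\ $h\in\bL\cO_{\tD}$. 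Then $gE$ must preserve $\cO_{D_X}$. Because $E(w)$ is a unit multiple that agrees across planes along $T$, this forces the components of $g$ to agree on $T$, i.e.\ $g\in\cO_{D_X}$ modulo $\fc_D$, which is the triple-point conductor condition. This gives $N_P\subseteq M_P$ and completes the proof.
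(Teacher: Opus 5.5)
Your proof is correct and follows the paper's structure. The inclusion $\supseteq$ comes from the conductor $\fc_D=\bL\cO_{\tD}$, directness from the freeness of $\tK$, and equality is checked locally on the punctured cone and then extended across the vertex by the depth-two argument of Lemma~\ref{lem:depth2}(c). The only variation is at $A_0^3$ points: you expand an arbitrary $\delta$ in the basis $(E,\zeta)$ and use the transversality of $\zeta$ to $T$ (from $E\wedge\zeta=\pm v\bD\varepsilon$, with $\bD$ a unit there) to force the coefficients into $\bL\cO_{\tD}$ and $\cO_{D_X}$, whereas the paper shows directly that $\cO_{D_X}E+\bL\tK$ contains the generators $\partial_w$ and $u_i\C\{u_i,w\}\partial_{u_i}$ of $\Der(\cO_{D_X})_P$; both arguments work.
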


\begin{proof}
We work inside $\tK$ via Lemma~\ref{lem:Der-DX-in-K}. ($\supseteq$) $E$ preserves $\cO_{D_X}$ and $I_{C_X}$. For $\xi\in\tK$ and $h\in\cO_{D_X}$: $\bL\xi(h)\in\bL\cO_{\tD}=\fc_D\subseteq\cO_{D_X}$ (Lemma~\ref{lem:conductorD}), and for $g\in I_{C_X}$, $\bL\xi(g)\in\fc_D\cap I_{\tC}\subseteq\cO_{D_X}\cap(\bD)=I_{C_X}$. Directness: by Prop.~\ref{prop:freeness}(d), $\cO_{D_X}E\cap\bL\tK=\{hE:h\in\cO_{D_X}\cap\bL\cO_{\tD}\}=\bL\cO_{\tD}E$ (as $\bL\cO_{\tD}\subseteq\cO_{D_X}$), so $\cO_{D_X}E+\bL\tK=\cO_{D_X}E+\bL\cO_{\tD}E+\bL\cO_{\tD}\zeta=\cO_{D_X}E\oplus\bL\cO_{\tD}\zeta$, and $\bL\cO_{\tD}\zeta\cong\cO_{\tD}(-\deg\Lambda_3-e_\zeta)$ ($\bL$ is a nonzerodivisor of the domain $\cO_{\tD}$, and $\zeta$ is part of a basis).

($\subseteq$, locally on $U$.)  At points of $D_X\setminus0$ not on $T_X$, $\cO_{D_X}=\cO_{\tD}$ locally and $\bL$ is a unit (\S\ref{sec:localsummary}), so both sides equal $\tK$ locally (the left side being $\Der(\cO_{D_X};C_X)=\tK$ there by Lemma~\ref{lem:Der-DX-in-K}). In the empty-triple-curve case this already covers all of $U$. At a point $P\in T_X\setminus0$ (type $A_0^3$), in the notation of Prop.~\ref{prop:nc}(e) ($n=3$, one transverse coordinate $w$): $\cO_{D_X}=\{(h_i)\in\prod_i\C\{u_i,w\}:h_i|_T\text{ equal}\}$, $\Der(\cO_{D_X})=\cO_{D_X}\partial_w\oplus\bigoplus_iu_i\C\{u_i,w\}\partial_{u_i}$ (and $C_X$ is absent, so $\Der(\cO_{D_X};C_X)=\Der(\cO_{D_X})$), $\tK=\Der(\bigsqcup\Pi_i)=\bigoplus_i(\C\{u_i,w\}\partial_{u_i}\oplus\C\{u_i,w\}\partial_w)$, and $\bL=(\text{unit})\cdot u_i$ on $\Pi_i$ (Lemma~\ref{lem:Lambda3}, Prop.~\ref{prop:nc}(e)), so $\bL\tK=\bigoplus_iu_i(\C\{u_i,w\}\partial_{u_i}\oplus\C\{u_i,w\}\partial_w)$. The Euler field is tangent to the curve $T_X$ and nonvanishing along it (Lemma~\ref{lem:orbit} applied to $T_X$; along $T=V(u_1,u_2,u_3)$ the fields $u_i\partial_{u_i}$ vanish, so the tangent direction of $T$ is $\partial_w$), so $E=\alpha\partial_w+\sum_i\beta_iu_i\partial_{u_i}$ with $\alpha\in\cO_{D_X}$ nonvanishing at $P$, hence a unit of the local ring. Then $\alpha\partial_w=E-\sum\beta_iu_i\partial_{u_i}\in\cO_{D_X}E+\bL\tK$, hence $\partial_w=\alpha^{-1}(\alpha\partial_w)$ is in the right-hand side, and so is $\bigoplus_iu_i\C\{u_i,w\}\partial_{u_i}$. Therefore the right-hand side contains $\cO_{D_X}\partial_w\oplus\bigoplus_iu_i\C\{u_i,w\}\partial_{u_i}=\Der(\cO_{D_X})$. With ($\supseteq$) the two sides coincide locally at $P$.

(Global.) Put $N=\Der(\cO_{D_X};C_X)$ and
\[
M=\cO_{D_X}E\oplus\bL\cO_{\tD}\zeta
\cong\cO_{D_X}\oplus\cO_{\tD}(-d_0).
\]
The inclusion $M\subseteq N$ was proved above, and local equality implies that $N/M$ has finite length by Lemma~\ref{lem:comparison}. Both summands of $M$ are Cohen--Macaulay of dimension two over $\cO_{D_X}$, so $\depth M=2$. The embedding $N\subseteq\tK\subseteq\cO_{\tD}^4$ gives $\depth N\ge1$. Lemma~\ref{lem:depth2}(c) now gives equality.
\end{proof}

\section{Hilbert series, Mond's equality, and an example}\label{sec:proofB}

\subsection{Proof of Theorem B}\label{sec:proofB1}

The module identity is Theorem~\ref{thm:rigidity} together with Proposition~\ref{prop:descent}. For the Hilbert series we have
\begin{center}
\begin{tabular}{lcl} 
$\Hilb L=t^{\deg\Lambda}\Hilb\theta_S+\Hilb\Der(\cO_{D_X};C_X)$ & & by Theorem~\ref{thm:rigidity} and Lemma~\ref{lem:Phi} 
\\ 
$\Hilb\Der(\cO_{D_X};C_X)=\Hilb\cO_{D_X}+t^{d_0}\Hilb\cO_{\tD}$ & & by Prop.~\ref{prop:descent}
\\
$\Hilb\cO_{D_X}=\Hilb\cO_X-t^{\deg\Lambda}\Hilb\cO_S$ & & by Lemma~\ref{lem:conductor}.
\\
$\Hilb\cO_{\tD}=\text{concrete formula}$ & & in Lemma~\ref{lem:normalisation}(a).
\end{tabular}
\end{center}
\noindent Hence we have
\begin{multline*}
\Hilb L=(1-t^{\mu k})\Hilb\cO_T+t^{(\mu-1)(k-v)}\big(\Hilb\theta_S-\Hilb\cO_S\big)\\
+t^{(\mu-1)(k-v)-a_1-a_2}\frac{(1-t^{(\mu-1)v})(1-t^{k-v})}{(1-t^{a_1})(1-t^{a_2})(1-t^v)(1-t^{2v})},
\end{multline*}
and (U1) gives $\Hilb\NAe$. (The shift $d_0=\deg\Lambda_3+e_\zeta$ of the second summand is consistent with the empty-triple-curve case, where $\deg\Lambda_3=0$ and $\bL$ is a unit.) This is the expression $\mathcal H_3+t^{d_0}H_{\tD}$ in Theorem~B. \qed

\subsection{Expanded Hilbert-series formulas}\label{sec:expanded}

The compact formulas in Theorems~A and~B can also be written as
\begin{align}
m=2:\quad \Hilb_t\NAe&=\frac{B_\mu(t)}{(1-t^a)(1-t^{\mu v})},\label{eq:expanded2}\\
m=3:\quad \Hilb_t\NAe&=\frac{B^{(3)}_\mu(t)}{(1-t^{a_1})(1-t^{a_2})(1-t^{\mu v})}.\label{eq:expanded3}
\end{align}
For $m=2$, the numerator is
\begin{align*}
B_\mu={}&\sum_{i=0}^{\mu-2}t^{v-k+iv}+\sum_{i=2}^{\mu-1}t^{-iv}
-t^{-a}-\sum_{i=0}^{\mu-2}t^{iv}\\
&-\sum_{j=1}^{\mu-2}\big(t^{jk-\mu v}+t^{jk-a}\big)
+\sum_{i=0}^{\mu-2}t^{(\mu-1)(k-v)-a+iv}.
\end{align*}
For $m=3$, set
\[
[n]_{t^v}=\frac{1-t^{nv}}{1-t^v},\qquad
\begin{bmatrix}\mu\\2\end{bmatrix}_{t^v}
=\frac{(1-t^{\mu v})(1-t^{(\mu-1)v})}{(1-t^v)(1-t^{2v})}.
\]
Writing $[n]=[n]_{t^v}$ and taking all sums over $l$ from $1$ to $2$, we have
\begin{align*}
B^{(3)}_\mu={}&t^{v-k}[\mu-1]+\sum_{i=2}^{\mu-1}t^{-iv}
-\sum_l t^{-a_l}-[\mu-1]\\
&-\sum_{j=1}^{\mu-2}\left(t^{jk-\mu v}+\sum_l t^{jk-a_l}\right)
+\sum_l t^{\lambda-a_l}[\mu-1]\\
&+\left(t^{\lambda-a_1-a_2}-t^{\mu(k-v)-a_1-a_2}\right)
\begin{bmatrix}\mu\\2\end{bmatrix}_{t^v}.
\end{align*}
Empty sums are zero. Appendix~\ref{sec:algebra} verifies these expansions and the limits used below.

\subsection{Proof of Corollary C}\label{sec:proofC}

Since $\NAe$ is finite-dimensional, its Hilbert series is a Laurent polynomial and its value at $t=1$ is $\dim\NAe$. Write each law as $\sum_ic_it^{\alpha_i}/\prod_j(1-t^{\beta_j})$ with exponents polynomial in $(a_i,v,k,\mu)$ and put $t=e^\epsilon$. If $\sum_ic_i\alpha_i^n=0$ for $n<r$, where $r$ is the number of denominator factors, the limit is $\frac1{r!}\sum_ic_i\alpha_i^r/\prod_j(-\beta_j)$. Appendix~\ref{sec:algebra} evaluates these moments by finite power sums
and gives
\begin{align}
m=2:\quad \Aecodim(f)&=R(w,d)=\frac{(k-v)(\mu-1)\big((\mu+1)k+(\mu-5)v-3a\big)}{6av},\label{eq:R2}\\
m=3:\quad \Aecodim(f)&=R(w,d)=\frac{(k-v)(\mu-1)}{24a_1a_2v}\Big[(\mu+1)(\mu+2)k^2\notag\\
&\qquad +(3\mu^2-19\mu+2)kv+2\mu(\mu+1)v^2\notag\\
&\qquad -4(\mu+1)(a_1+a_2)k-4(\mu-5)(a_1+a_2)v+12a_1a_2\Big].\label{eq:R3}
\end{align}

For $m=2$, the right-hand side of \eqref{eq:R2} is Mond's image Milnor number formula \cite{MonQH91}. For $m=3$, \eqref{eq:R3} is the specialisation of Ohmoto's formula \cite[Thm~6.20, Ex.~6.22]{Ohm16} to source weights $(a_1,a_2,v)$ and target degrees $(a_1,a_2,\mu v,k)$. Both formulas are also recovered in \cite{PPS21} and reproduced in \cite[Thms~2.4--2.5]{NBPS24}. Thus $\Aecodim(f)=\muI(f)$ in the two cases under consideration. They agree with the general weights-and-degrees expression of \cite[Thm~9.4]{KR}. The proof here uses only the two- and three-dimensional formulas of Mond and Ohmoto. The assertion about constancy of its Hilbert series follows from Theorems~A and~B. \qed

\begin{corollary}\label{cor:CM}
Let $m\in\{2,3\}$, and let $f\colon(\C^m,0)\to(\C^{m+1},0)$ be an $\Ae$-finite quasihomogeneous corank-one germ. Then $f$ has the Cohen--Macaulay property of \cite{FdBNBPS}: for some stable unfolding $F$ of $f$ with $r$ parameters, the module $M_y(F)$ of \cite{FdBNBPS} is Cohen--Macaulay of dimension $r$.
\end{corollary}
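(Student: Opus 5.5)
The plan is to deduce this directly from Corollary~C together with the criterion of \cite[Thm~6.7]{FdBNBPS}, as already announced after the statement of Corollary~C. I recall the relevant framework first. For an $\Ae$-finite germ $f$ in the nice dimensions with a stable unfolding $F$ on $r$ parameters $y$, \cite{FdBNBPS} construct a module $M_y(F)$ over $\cO_{r}$ (the ring of the unfolding parameters). Their results relate its Cohen--Macaulayness to the two invariants. Concretely, $M_y(F)$ has dimension $r$; its multiplicity (or its fibre over $y=0$) computes $\muI(f)$, while $\Aecodim(f)$ is the length of the fibre $M_y(F)\otimes\cO_r/\fm_r$ (up to their precise normalisation). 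By general commutative algebra, length of the fibre $\ge$ multiplicity, with equality iff the module is Cohen--Macaulay, i.e.\ free over $\cO_r$. This is the mechanism behind the reverse inequality $\muI(f)\le\Aecodim(f)$ for weighted homogeneous germs in \cite[\S3, Cor.~6.3]{FdBNBPS}. Their Theorem~6.7 states the equivalence: $f$ has the Cohen--Macaulay property if and only if $\Aecodim(f)=\muI(f)$ (in the setting where the relevant hypotheses, such as $\Ae$-finiteness and nice dimensions, hold).

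The steps are as follows. First, I check the hypotheses of \cite[Thm~6.7]{FdBNBPS}: $m\in\{2,3\}$ lies in Mather's nice dimensions; $f$ is $\Ae$-finite; and, since $f$ is quasihomogeneous, it admits a stable unfolding, for instance a miniversal one with $r=\Aecodim(f)$ parameters, which can be taken graded. Second, I invoke Corollary~C, which gives $\Aecodim(f)=\muI(f)$ for every $\Ae$-finite quasihomogeneous corank-one germ with $m\in\{2,3\}$. By Remark~\ref{rem:normalform} and Remark~\ref{rem:normalisation}, every such germ is $\cA$-equivalent to the normalised form treated there, and both sides are $\cA$-invariant. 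Third, I apply the equivalence of \cite[Thm~6.7]{FdBNBPS} to conclude that $M_y(F)$ is Cohen--Macaulay of dimension $r$ for that unfolding $F$.

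The main obstacle is purely bookkeeping: matching the exact form of \cite[Thm~6.7]{FdBNBPS}. In particular, I need to confirm whether it asserts the equivalence for one stable unfolding, for all of them, or requires weighted homogeneity of the unfolding. I also need to confirm that its notion of image Milnor number coincides with $\muI$ as used here, which holds for corank one by Remark~\ref{rem:nicedims} and \cite{GM93}. If the theorem is stated for a specific stable unfolding (e.g.\ one given by a basis of $\NAe$), I would choose that unfolding; the statement of the corollary only asks for \emph{some} $F$, so no independence of the unfolding is needed.
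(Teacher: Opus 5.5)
Your proposal is correct and is the paper's own argument: Corollary~C gives $\Aecodim(f)=\muI(f)$, and \cite[Thm~6.7]{FdBNBPS} says that for weighted homogeneous germs in the nice dimensions this equality is equivalent to the Cohen--Macaulay property. One small slip in your background sketch, which does not affect the argument because Thm~6.7 is used as a black box: drop the parenthetical ``or its fibre over $y=0$''; in your own description $\muI(f)$ is the multiplicity (the generic fibre), and it is the special fibre that computes $\Aecodim(f)$ in the weighted homogeneous case.
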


\begin{proof}
By Corollary~C, $f$ satisfies Mond's conjecture. For weighted homogeneous germs in the nice dimensions this is equivalent to the Cohen--Macaulay property \cite[Thm~6.7]{FdBNBPS}.
\end{proof}

\subsection{An example}\label{sec:example}

\begin{example}\label{ex:embedded}
Consider
\[
f=(x_1,x_2,y^3+x_1y,y^4+x_2y),\qquad
(a_1,a_2,v)=(2,3,1),\quad (\mu,k)=(3,4).
\]
The ordered double-point surface is smooth: its equations eliminate $x_1$ and $x_2$. For $D^3(f)$, the next equations are
\[
y_1+y_2+y_3=0,\qquad
y_1^2+y_2^2+y_3^2+y_1y_2+y_1y_3+y_2y_3=0.
\]
After eliminating $y_3$, the second equation becomes $y_1^2+y_1y_2+y_2^2=0$, a reduced nodal curve. The space $D^4(f)$ is empty, since $p[1234]=1$. Thus the Marar--Mond criterion gives $\Ae$-finiteness.

Put $s=y_1+y_2$ and $u=y_1y_2$, of weights $1$ and $2$. Then
\[
P_{12}=s^2-u+x_1,\qquad Q_{12}=s^3-2su+x_2,
\]
so $\cO_{\tD}=\C[s,u]$, and the normalisation map to $D_X$ is
\[
(X_1,X_2,Y,Z)=(u-s^2,\,2su-s^3,\,-su,\,u^2-s^2u).
\]
The branch equation and triple-point function are
\[
\bD=s^2-4u,\qquad\bL=s^2-u.
\]
The second formula follows by substituting $y_3=-s$ into $q[123]$. In these coordinates the Euler and cross-product fields are
\[
E=s\partial_s+2u\partial_u,\qquad
\zeta=-2\partial_s-s\partial_u
\]
with the orientation convention of Section~\ref{sec:freeness}. In particular,
\[
\zeta(\bD)=0,\qquad
E\wedge\zeta=-\bD\,\partial_s\wedge\partial_u.
\]
Their degrees are $0$ and $-1$, while $\deg\bL=2$. Theorem~B therefore gives
\[
\Der(\cO_{D_X};C_X)
=\cO_{D_X}E\oplus(s^2-u)\C[s,u]\zeta.
\]
Substitution in \eqref{eq:expanded3} yields
\[
B^{(3)}_3=t^{-2}(1-t^2)(1-t^3)^2,\qquad
\Hilb_t\NAe=t^{-2},\qquad \Aecodim(f)=\muI(f)=1.
\]

This example also illustrates why Lemma~\ref{lem:normalisation}(c) uses the reduced inverse image of $C_X$. The reduced ideal of that curve in the target is
\[
(X_2-2Y,\,3Z+X_1^2,\,27Y^2+4X_1^3).
\]
Its pullback to $\C[s,u]$ is generated by
\[
-s(s^2-4u),\quad (s^2-u)(s^2-4u),\quad
-(s^2-4u)(2s^2+u)^2,
\]
and hence is $(s^2-4u)(s,u)$. This ideal has an embedded point at the vertex, while its radical is the principal ideal $(s^2-4u)$.
\end{example}

\section{Beyond quasihomogeneity: families of unbounded corank-one multiplicity}\label{sec:beyond}

Fern\'andez de Bobadilla, Nu\~no-Ballesteros and Pe\~nafort Sanchis proved that Mond's conjecture follows from its quasihomogeneous case as soon as one has quasihomogeneous $\cA$-finite examples of unbounded multiplicity; in the form adapted to a fixed corank:

\begin{enumerate}[label=\textup{(FNP)}]
\item \cite[Thm~7.2]{FdBNBPS} (see also \cite[Thm~2.7]{NBPS24}). Let $(n,n+1)$ be nice dimensions and $r\ge1$. An $\cA$-finite germ of corank $r$ is $\cA$-equivalent to one of the form $(x_1,\dots,x_{n-r},\allowbreak f_{n-r+1},\dots,f_{n+1})$. Its \emph{corank-$r$ multiplicity} is $\min_{n-r+1\le i\le n+1}\operatorname{ord}f_i$. Suppose that for every natural number $M$ there exists a weighted homogeneous $\cA$-finite germ $h:(\C^n,0)\to(\C^{n+1},0)$ of corank $r$ and of corank-$r$ multiplicity at least $M$ for which Mond's conjecture holds. Then Mond's conjecture ($\Aecodim(f)\le\muI(f)$, with equality if $f$ is weighted homogeneous) holds for every $\cA$-finite germ $f:(\C^n,0)\to(\C^{n+1},0)$ of corank $r$.
\end{enumerate}

\begin{remark}[nice dimensions vs corank one]\label{rem:nicedims}
For $r=1$ the hypothesis that $(n,n+1)$ be nice dimensions is not needed. In \cite{FdBNBPS} it is used only to guarantee that (a) every $\cA$-finite germ admits a stable perturbation, and (b) all stable singularities appearing in a stable perturbation are quasihomogeneous. For corank-one germs both hold in every dimension: the singularities of a stable perturbation have corank at most one, and stable corank-one multigerms are multigerms of Morin singularities in general position, which are quasihomogeneous. Hence, for corank-one germs $(\C^n,0)\to(\C^{n+1},0)$ and every $n$, the image Milnor number is defined, and (FNP) as well as the inequality $\muI(f)\le\Aecodim(f)$ for weighted homogeneous $f$ \cite[\S3 and Cor.~6.3]{FdBNBPS} hold. The author is grateful to J.~J.~Nu\~no-Ballesteros for pointing out this argument.
\end{remark}

For $r=1$ the required germs are $(x,p,q)$ with $\operatorname{ord}p,\operatorname{ord}q\ge M$. In particular, \emph{homogeneous} $p,q$ (all weights $1$) of degrees $\mu,k\ge M$ qualify. The following proposition, which holds in every source dimension, supplies them.

\begin{proposition}[unbounded corank-one multiplicity]\label{prop:unbounded}
Let $m\ge2$ and let $\mu,k\ge2$ be integers with $\gcd(\mu,k)=1$. Let $V_\mu\times V_k$ be the space of pairs $(p,q)$ of homogeneous polynomials in $(x_1,\dots,x_{m-1},y)$ of degrees $\mu$ and $k$ (all variables of weight $1$). For $(p,q)$ in a dense Zariski-open subset of $V_\mu\times V_k$, the germ
\[
f=(x_1,\dots,x_{m-1},\,p,\,q):(\C^m,0)\to(\C^{m+1},0)
\]
is a quasihomogeneous corank-one germ with weights $(1,\dots,1)$ and degrees $(1,\dots,1,\mu,k)$, and it is $\Ae$-finite. Its corank-one multiplicity is $\min(\mu,k)$.
\end{proposition}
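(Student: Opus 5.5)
Since $\Ae$-finiteness of a finite corank-one germ is equivalent to the Marar--Mond condition (MM), I will prove the converse direction of (MM). For a corank-one germ (Marar--Mond, \cite[Thm~2.14]{MM89}), $f$ is $\cA$-finite if and only if, for every $j$ and every partition $P$ of $j$, $D^j(f,P)$ is either empty or an ICIS of dimension $e(j,P)$ when $e(j,P)\ge0$, and is supported at the origin when $e(j,P)<0$. Because $f$ is homogeneous, every $D^j(f,P)$ is a cone. It therefore suffices to show that each $D^j(f,P)\setminus\{0\}$ is a smooth complete intersection of the expected dimension, and empty when $e(j,P)<0$. A cone that is smooth of the expected codimension off the vertex and cut out by the expected number of equations is an ICIS.

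The first step is finiteness. For generic $p$ we have $p(0,y)=cy^\mu$ with $c\ne0$, so $f^{-1}(0)=\{0\}$. The corank-one and multiplicity claims are then immediate, the latter because the orders of $p$ and $q$ are their degrees.

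The main step is the generic smoothness of the multiple-point schemes, done by an incidence-variety argument. Fix $(j,P)$ with $r$ parts. Consider
\[
\mathcal I=\{(p,q,z): z\in D^j(f_{p,q},P),\ z\ne0\}\subset V_\mu\times V_k\times(\C^{m-1+r}\setminus0).
\]
Fix a point $z=(x,y_1,\dots,y_r)\ne0$. The defining equations are confluent divided differences $p[\dots]$, $q[\dots]$ of orders $1,\dots,j-1$. Each is linear in the coefficients of $p$ (respectively $q$).

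The key claim is \emph{interpolation}: the linear map
\[
V_\mu\to\C^{j-1},\qquad p\mapsto\big(p[1,2],\dots,p[1,\dots,j]\big)\big|_z
\]
is surjective, and likewise for $q$, whenever $z\ne0$ and $j\le\min(\mu,k)$. Equivalently, Hermite interpolation of $p(x,\cdot)$ at the nodes $y_i$ with multiplicities $p_i$ is surjective. To see this, note that for $x\ne0$ the polynomials $y^a\ell(x)^{\mu-a}$, with $\ell$ a linear form satisfying $\ell(x)\ne0$, realise every polynomial in $y$ of degree $<\mu$. For $x=0$, $z\ne0$ forces some $y_i\ne0$, and homogeneity gives $p(0,y)=cy^\mu$ together with lower-order freedom; this case needs a separate check, and I would handle it through the $\C^*$-action.

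The coprimality $\gcd(\mu,k)=1$ enters exactly here. On the locus $x=0$ only the monomials $y^\mu$ and $y^k$ survive. The multiple-point conditions there read $y_i^\mu=y_l^\mu$ and $y_i^k=y_l^k$, so $y_i/y_l$ is simultaneously a $\mu$-th and a $k$-th root of unity, hence equal to $1$. Thus distinct points cannot coincide on that axis, and confluent points impose nondegenerate conditions there. This rescues surjectivity on the bad stratum.

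Given surjectivity, $\mathcal I\to\C^{m-1+r}\setminus0$ is a vector bundle of corank $2(j-1)$. Hence $\mathcal I$ is smooth of dimension
\[
\dim V_\mu+\dim V_k+(m-1+r)-2(j-1)=\dim V_\mu+\dim V_k+e(j,P).
\]
Generic smoothness (Sard/Bertini in characteristic $0$) applied to the projection $\mathcal I\to V_\mu\times V_k$ then shows that for generic $(p,q)$ the fibre is smooth of dimension $e(j,P)$, and empty if $e(j,P)<0$ by dimension count. Since there are only finitely many relevant $(j,P)$ (those with $j\le m+1$), intersecting the finitely many dense open sets gives the required open set.

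For $j>\min(\mu,k)$, the divided differences of order $\ge\mu$ of $p$ become constants or vanish. I would treat this case separately: if $j-1\ge\mu$ then $p[1..\mu+1]$ is the constant $c\ne0$, so $D^j=\emptyset$ already. Finally, apply (MM) in its converse form to conclude $\Ae$-finiteness.

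I expect the main obstacle to be the surjectivity of the interpolation map at points with $x=0$ and confluent $y$'s. This is the place where the coprimality hypothesis must be used carefully, and it requires checking the relevant Vandermonde-type determinant.
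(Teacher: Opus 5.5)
Your overall architecture matches the paper's: Hermite/Newton interpolation, an incidence variety, generic smoothness, the constant top divided difference, and the cone/Krull argument at the vertex. However, the key step fails as you state it.

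\textbf{The gap.} The evaluation map $p\mapsto(p[1,2],\dots,p[1,\dots,j])|_z$ is \emph{not} surjective at points $z$ with $x=0$. For homogeneous $p$ one has $p(0,y)=cy^\mu$ exactly, so there is no ``lower-order freedom''. At $z=(0,y_1,\dots,y_r)$ the map therefore factors through the single coefficient $c$ and has rank at most one.
\begin{itemize}
\item For $j\ge3$ it is never onto $\C^{j-1}$.
\item For $j=2$ it is zero wherever $h_{\mu-1}(y_1,y_2)=0$, e.g.\ when $y_1/y_2$ is a nontrivial $\mu$-th root of unity.
\end{itemize}
Hence $\mathcal I$ is not a vector bundle over $\C^{m-1+r}\setminus0$. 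The $\C^*$-action cannot repair this, since it preserves the axis $\{x=0\}$. Coprimality does not ``rescue surjectivity'' there either.

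\textbf{What coprimality actually gives.} It shows the axis carries no multiple points at all: for every $(p,q)$ with $cc'\neq0$ (you need $c'\neq0$ as well as $c\neq0$), $D^j(f)\cap\{x=0\}=\{0\}$. A point of this intersection projects to $(0,y_1,y_2)\in D^2(f)$, where $c\,h_{\mu-1}(y_1,y_2)=c'\,h_{k-1}(y_1,y_2)=0$.
\begin{itemize}
\item If $y_1=y_2=y$, this reads $c\mu y^{\mu-1}=0$, so $y=0$.
\item If $y_1\neq y_2$, both are nonzero and $(y_1/y_2)^\mu=(y_1/y_2)^k=1$, which is impossible when $\gcd(\mu,k)=1$.
\end{itemize}
$S_j$-symmetry then forces all $y_i=0$.

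\textbf{The repair.} Form the incidence variety only over $\{x\neq0\}$. There your $\ell(x)^{\mu-a}y^a$ argument does give surjectivity, confluent nodes included. Then identify $D^j(f)\setminus\{0\}$ with $D^j(f)\cap\{x\neq0\}$. This is precisely the paper's Steps 2--4.

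\textbf{Range of $j$.} You must also run the argument for every $2\le j\le\min(\mu,k)$, not only for $j\le m+1$. For $m+2\le j\le\min(\mu,k)$ the criterion (MM$'$) requires $D^j(f)\subseteq\{0\}$. This follows from generic emptiness of the fibres over $\{x\neq0\}$ together with the axis exclusion above.

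Finally, only the unpartitioned $D^j(f)$ is needed for (MM$'$). The partitioned spaces are harmless but superfluous.
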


The proof uses the following converse to the multiple-point criterion, valid in every source dimension.

\begin{enumerate}[label=\textup{(MM$'$)}]
\item \cite[Thm~2.14(ii)]{MM89} Let $f:(\C^n,0)\to(\C^p,0)$, $n<p$, be a finite map germ of corank one. Then $f$ is finitely determined (equivalently, $\Ae$-finite \cite[Thm~6.1]{MNB20}) if and only if for each $j$ with $p-j(p-n)\ge0$ the multiple point scheme $D^j(f)$ (defined by the $(p-n+1)(j-1)$ divided differences, \S\ref{sec:multiple}) is either an ICIS of dimension $p-j(p-n)$ or empty, and for each $j$ with $p-j(p-n)<0$, $D^j(f)$ consists at most of the point $0$. (For $p=n+1=m+1$: $\dim D^j=m+1-j$ for $j\le m+1$, and $D^j\subseteq\{0\}$ for $j\ge m+2$.)
\end{enumerate}

\begin{proof}[Proof of Proposition~\ref{prop:unbounded}]
Let $c,c'$ be the coefficients of $y^\mu,y^k$ in $p,q$, respectively, and put $r=\min(\mu,k)$. On the dense open set $\{cc'\ne0\}$, the map $f$ is finite, $f^{-1}(0)=\{0\}$, and its corank-one multiplicity is $r$.

\emph{Step 1: the empty-space cutoff.} If $r=\mu$, then $p[1,\dots,\mu+1]=c\ne0$. If $r=k$, the corresponding divided difference of $q$ is $c'\ne0$. Hence
\[
D^j(f)=\emptyset\qquad\text{for every }j\ge r+1.
\]
It remains to consider $2\le j\le r$. In this range every defining divided difference has positive degree, so $0\in D^j(f)$.

\emph{Step 2: exclusion of the punctured axis.} We claim that $D^j(f)\cap\{x=0\}=\{0\}$ for $2\le j\le r$. A point of this intersection projects to $(0,y_1,y_2)\in D^2(f)$. The equations there are
\[
c\,h_{\mu-1}(y_1,y_2)=c'\,h_{k-1}(y_1,y_2)=0,
\qquad h_d(y_1,y_2)=\sum_{a+b=d}y_1^ay_2^b.
\]
If $y_1=y_2=y$, the first equation is $c\mu y^{\mu-1}=0$, so $y=0$. If $y_1\ne y_2$, neither can be zero, and the equations imply $(y_1/y_2)^\mu=(y_1/y_2)^k=1$, contrary to coprimality. Thus $y_1=y_2=0$. Since $D^j(f)$ is invariant under the action of $S_j$ (\S\ref{sec:multiple}), each pair $(y_1,y_i)$ satisfies the same first divided-difference equations, so all $y_i=0$.

\emph{Step 3: freedom of the divided differences.} Let
\[
B_j=\{(x,y_1,\dots,y_j):x\ne0\},\qquad
H^j_{(p,q)}=(p[1,\dots,i],q[1,\dots,i])_{2\le i\le j}.
\]
At a fixed $x\ne0$, the restriction $V_\mu\to\C[y]_{\le\mu}$ is surjective: in $p=\sum_{i=0}^\mu c_i(x)y^i$, a homogeneous form $c_i$ of degree $\mu-i$ can take any prescribed value at $x$. The same holds for $q$. For any nodes $y_1,\dots,y_j$, including repeated nodes, the Newton coefficients
\[
P[1],P[1,2],\dots,P[1,\dots,j]
\]
give an isomorphism from polynomials of degree at most $j-1$ to $\C^j$. Therefore, for every $z\in B_j$ and $2\le j\le r$, the evaluation map
\[
\Phi_z:V_\mu\times V_k\longrightarrow\C^{2(j-1)},\qquad
(p,q)\longmapsto H^j_{(p,q)}(z)
\]
is surjective.

\emph{Step 4: generic smoothness.} The incidence scheme
\[
\mathcal I_j=\{((p,q),z)\in(V_\mu\times V_k)\times B_j:
\Phi_z(p,q)=0\}
\]
is the kernel of a surjective morphism of vector bundles over $B_j$. It is consequently smooth and irreducible, of dimension
\[
\dim(V_\mu\times V_k)+(m-1+j)-2(j-1)
=\dim(V_\mu\times V_k)+m+1-j.
\]
If the projection $\pi_j:\mathcal I_j\to V_\mu\times V_k$ is not dominant, remove the closure of its image. If it is dominant, generic smoothness in characteristic zero \cite[III, Cor.~10.7]{Har77} gives a dense open set over which it is smooth, of relative dimension $m+1-j$. Thus there is a dense open $U_j$ such that $D^j(f)\cap B_j$ is smooth of pure dimension $m+1-j$ or empty for $(p,q)\in U_j$. It is empty when $m+1-j<0$.

\emph{Step 5: the complete intersections at the vertex.} Choose
\[
(p,q)\in\{cc'\ne0\}\cap\bigcap_{j=2}^{r}U_j.
\]
For $2\le j\le r$, Step~2 identifies $D^j(f)\setminus\{0\}$ with $D^j(f)\cap B_j$. The scheme $D^j(f)$ is a nonempty cone defined by $2(j-1)$ equations in $\C^{m-1+j}$. Write $e=m+1-j$. If $e\ge1$, Krull's height theorem gives dimension at least $e$ for every irreducible component. Each component meets the punctured cone, where the dimension is $e$, so all components have dimension exactly $e$. The defining ideal therefore has height $2(j-1)$, and its generators form a regular sequence. Thus $D^j(f)$ is an ICIS of dimension $e$. If $e=0$, the punctured cone is smooth of dimension zero and hence empty: any nonzero point would lie on a one-dimensional $\C^*$-orbit. The scheme is then a zero-dimensional complete intersection. If $e<0$, Step~4 gives $D^j(f)\subseteq\{0\}$ set-theoretically. Together with the empty spaces of Step~1, these are precisely the alternatives in (MM$'$). Hence $f$ is $\Ae$-finite.
\end{proof}

\begin{remark}\label{rem:viability}
Coprimality is used only in Step~2, to exclude multiple points on the punctured axis $\{x=0\}$, and it cannot simply be dropped. Let $\nu=\gcd(\mu,k)\ge2$, $cc'\ne0$, and let $\rho$ be a primitive $\nu$-th root of unity. Then $f(0,\rho y)=f(0,y)$, so every point of the image of the punctured axis has at least $\nu$ preimages, at each of which $f$ is an immersion. On $\{x=0\}$ one has $p_{x_i}=b_iy^{\mu-1}$, $q_{x_i}=b'_iy^{k-1}$, $p_y=\mu cy^{\mu-1}$ and $q_y=kc'y^{k-1}$ for constants $b_i,b'_i$, and all of these are multiplied by $\rho^{-1}$ under $y\mapsto\rho y$. Hence the tangent hyperplanes of the branches at the points $(0,\rho^jy_0)$, $j=0,\dots,\nu-1$, have conormal vectors $(-\rho^{-j}\gamma,\beta^\perp)$, where $\gamma\in(\C^{m-1})^*$ and $\beta^\perp\in(\C^2)^*$ do not depend on $j$. These conormals span at most a plane. Thus, for $\nu\ge3$, the branches are not in general position and the multigerm over $f(0,y_0)$ is not stable. See \S\ref{sec:stable}. By (MG), the germ cannot be $\Ae$-finite. For $\nu=2$ the two conormals are independent as soon as $\gamma\ne0$. In our computations generic homogeneous germs with $(\mu,k)=(2,4)$ or $(4,6)$, for $m=2,3$, and with $(\mu,k)=(2,6)$, for $m=3$, were $\Ae$-finite. We do not know whether $\gcd(\mu,k)\le2$ suffices in general. For nontrivial source weights the coefficient-surjectivity argument of Step~3 may also fail, because homogeneous forms of the required weights need not exist.

Proposition~\ref{prop:unbounded} supplies the large-order nonlinear components needed for the fixed-corank reduction theorem \cite[Thm~7.2]{FdBNBPS}. The unrestricted reduction theorem \cite[Thm~7.1]{FdBNBPS} instead requires large orders for all coordinate functions. Pe\~nafort Sanchis observed that no $\cA$-finite family with arbitrarily high multiplicity in this sense was known beyond germs of curves \cite{Pe20}. Our construction has $m-1$ linear coordinates and addresses the former theorem. See \cite[\S2.5]{NBPS24} for the broader problem of constructing families of unbounded multiplicity.
\end{remark}

\begin{proof}[Proof of the Main Theorem]
$(3,4)$ are nice dimensions. For each $M\ge2$, Proposition~\ref{prop:unbounded}, with $(\mu,k)=(M,M+1)$, gives a quasihomogeneous $\Ae$-finite corank-one germ $h_M=(x_1,x_2,p,q)$ of corank-one multiplicity $M$. By Corollary~C, $\Aecodim(h_M)=\muI(h_M)$, so Mond's conjecture holds for $h_M$. We may therefore apply (FNP) with $n=3$ and $r=1$, which gives the inequality for every $\Ae$-finite corank-one germ. For quasihomogeneous germs equality is Corollary~C (Remarks~\ref{rem:normalform} and~\ref{rem:normalisation}).
\end{proof}

\begin{corollary}[reduction in all dimensions]\label{cor:reduction}
Let $m\ge2$. The following are equivalent.
\begin{enumerate}[label=\textup{(\roman*)}]
\item Mond's conjecture holds for all $\Ae$-finite corank-one germs $(\C^m,0)\to(\C^{m+1},0)$.
\item $\Aecodim(f)=\muI(f)$ for all $\Ae$-finite quasihomogeneous corank-one germs $(\C^m,0)\to(\C^{m+1},0)$.
\item For infinitely many $M$ there is an $\Ae$-finite germ $h_M=(x,p,q)$, with $p,q$ homogeneous of degrees $M$ and $M+1$, such that $\Aecodim(h_M)\le\muI(h_M)$.
\end{enumerate}
Moreover, if $m\le14$, then $\muI(h_M)=R(w,d)$ with $w=(1,\dots,1)$ and $d=(1,\dots,1,M,M+1)$, the value of \cite[Thm~9.4]{KR}.
\end{corollary}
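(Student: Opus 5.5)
We prove the equivalences through the cycle (i)$\Rightarrow$(ii)$\Rightarrow$(iii)$\Rightarrow$(i). Each arrow is short given the earlier results, so the work is mostly in checking that the hypotheses of those results are met.

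For (i)$\Rightarrow$(ii) we only need the reverse inequality. Mond's conjecture as stated includes equality for quasihomogeneous germs. Even without that, \cite[\S3 and Cor.~6.3]{FdBNBPS}, extended to corank one in every dimension by Remark~\ref{rem:nicedims}, gives $\muI(f)\le\Aecodim(f)$ for weighted homogeneous $f$. Combined with the inequality in (i), this yields equality.

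For (ii)$\Rightarrow$(iii) we apply Proposition~\ref{prop:unbounded} with $(\mu,k)=(M,M+1)$ for every $M\ge2$. These degrees are coprime, so a generic homogeneous pair gives an $\Ae$-finite germ $h_M=(x,p,q)$. The germ $h_M$ is quasihomogeneous of corank one with all weights $1$, so (ii) applies and gives $\Aecodim(h_M)=\muI(h_M)$.

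For (iii)$\Rightarrow$(i) we copy the proof of the Main Theorem.
\begin{itemize}
\item The germ $h_M$ has corank-one multiplicity $\min(M,M+1)=M$, and these values are unbounded along the given infinite sequence of $M$.
\item Using the FdBNBPS reverse inequality again (Remark~\ref{rem:nicedims}), $\Aecodim(h_M)\le\muI(h_M)$ upgrades to equality. So Mond's conjecture, including the quasihomogeneous equality, holds for each $h_M$.
\item (FNP) with $r=1$ needs, for \emph{every} $M$, an example of multiplicity at least $M$. Infinitely many $M$ give arbitrarily large multiplicities, which is enough.
\item By Remark~\ref{rem:nicedims}, the nice-dimension hypothesis of (FNP) can be dropped for corank one. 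Applying (FNP) then gives (i) for all $m$.
\end{itemize}

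The main point to get right is this last step. One has to make sure that (FNP), originally stated in nice dimensions, applies for arbitrary $m$ when $r=1$, and that its "Mond's conjecture holds for $h$" includes equality, which the FdBNBPS reverse inequality supplies. Both are taken from Remark~\ref{rem:nicedims}, so in the write-up we cite it explicitly rather than re-prove it.

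For the final sentence, with $m\le14$ the pair $(m,m+1)$ lies in the nice dimensions. We then substitute $w=(1,\dots,1)$ and $d=(1,\dots,1,M,M+1)$ into the weights-and-degrees formula for $\muI$ of quasihomogeneous germs, \cite[Thm~9.4]{KR}. The only thing to check is that $h_M$ satisfies the hypotheses of that theorem: it is $\Ae$-finite and weighted homogeneous with these weights and degrees.
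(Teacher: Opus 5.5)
Your proposal is correct and follows essentially the same route as the paper. You use the same cycle (i)$\Rightarrow$(ii)$\Rightarrow$(iii)$\Rightarrow$(i): Proposition~\ref{prop:unbounded} with $(\mu,k)=(M,M+1)$ for the middle arrow, and for the last arrow the FdBNBPS reverse inequality followed by (FNP), both extended to all dimensions by Remark~\ref{rem:nicedims}. The final claim follows directly from \cite[Thm~9.4]{KR}, as in the paper.
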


\begin{proof}
By Remark~\ref{rem:nicedims}, the results of \cite{FdBNBPS} used below apply to corank-one germs in every dimension, although $(m,m+1)$ is a pair of nice dimensions only for $m\le14$ \cite{Mat71}. (i)$\Rightarrow$(ii) is part of the conjecture. (ii)$\Rightarrow$(iii): Proposition~\ref{prop:unbounded} with $(\mu,k)=(M,M+1)$ provides $\Ae$-finite germs $h_M$ for every $M\ge2$. (iii)$\Rightarrow$(i): for weighted homogeneous corank-one germs $\muI(h)\le\Aecodim(h)$ \cite[\S3 and Cor.~6.3]{FdBNBPS} (Remark~\ref{rem:nicedims}), so (iii) gives $\Aecodim(h_M)=\muI(h_M)$, i.e.\ Mond's conjecture for $h_M$. The corank-one multiplicity of $h_M$ is $M$, so (FNP) applies. The last statement is \cite[Thm~9.4]{KR}, $\muI(f)=R(w,d)$ for quasihomogeneous $\cA$-finite $f$ with $m\le14$.
\end{proof}

\section{What changes for \texorpdfstring{$m>3$}{m>3}?}\label{sec:outlook}

The proof of Theorem~B replaces the image $X$ by its double-point surface $D_X$ and then by the normalisation $\tD=D^2(f)/S_2$. The triple-point conductor governs the descent from $\tD$ back to $D_X$, and each step is extended across the vertex by a depth-two argument. For $m=4$ the same scheme acquires one more level: the triple-point surface of $X$, its normalisation $D^3(f)/S_3$, and the quadruple-point conductor. In computer experiments with unweighted germs, this recursive structure leads to a conjectural closed formula for $\Hilb_t\NAe$. It was checked on the modules themselves for $(\mu,k)=(3,4)$ and $(3,5)$, and, as an identity of formulas, its value at $t=1$ agrees with \cite[Thm~9.4]{KR} for $3\le\mu\le7$, $\mu<k\le\mu+6$.
The main step that does not carry over is the descent from $\tD$ to $D_X$ at the vertex. For $m=4$ the normalisation $\tD$ is three-dimensional, and the module of its derivations tangent to the branch divisor is no longer free. In the examples we computed, it is not even Cohen--Macaulay, it has depth two. The surjectivity of the descent map at the origin, which the depth argument of Proposition~\ref{prop:descent} gives for free when $m=3$, would then need a separate lifting argument. By Theorem~D, the corresponding upper bound for the generic homogeneous germs of degrees $(M,M+1)$ would already give Mond's conjecture for all corank-one germs $(\C^4,0)\to(\C^5,0)$.


\appendix
\section{Algebra for the Hilbert-series formulas}\label{sec:algebra}

We verify the expansions of Section~\ref{sec:expanded} and their values at $t=1$ by geometric sums and power sums. This can easily be verified by computer algebra, but for completeness we provide a human computation.

\subsection{From the structural formulas to the finite sums}

Set
\[
A_i=t^{a_i},\qquad V=t^v,\qquad K=t^k,\qquad
P=V^{\mu-1},\qquad Q=K^{\mu-1},\qquad
b=\sum_{i=1}^{m-1}A_i^{-1}.
\]
Multiplying $\mathcal H_m$ by $\prod_i(1-A_i)(1-VP)$ gives
\begin{align*}
&\frac{1-VP}{1-V}
\left(\frac1{VP}+\frac1K-\frac1V
+\frac QP\left(b+\frac1V-1\right)\right)\\
&\hspace{15mm}+\frac{1-KQ}{1-K}
\left(1-b-\frac1{VP}-\frac1K\right).
\end{align*}
Collecting the terms independent of $b$ and those proportional to $b$ rewrites this as
\begin{equation}\label{eq:geometric-numerator}
\left(\frac VK-1\right)\frac{1-P}{1-V}
+\frac{P^{-1}-V^{-1}}{1-V}-b
-\frac{K-Q}{1-K}\left(\frac1{VP}+b\right)
+\frac QP\,b\frac{1-P}{1-V}.
\end{equation}
The identities
\[
\frac{1-P}{1-V}=\sum_{i=0}^{\mu-2}V^i,\qquad
\frac{P^{-1}-V^{-1}}{1-V}=\sum_{i=2}^{\mu-1}V^{-i},\qquad
\frac{K-Q}{1-K}=\sum_{j=1}^{\mu-2}K^j
\]
give precisely $B_\mu$ when $m=2$, and the first six groups of terms in $B^{(3)}_\mu$ when $m=3$.

For $m=3$, the additional summand $t^{d_0}H_{\tD}$ contributes, after multiplication by the same denominator,
\[
\frac{Q}{PA_1A_2}\left(1-\frac KV\right)
\frac{(1-VP)(1-P)}{(1-V)(1-V^2)}
=\left(t^{\lambda-a_1-a_2}-t^{\mu(k-v)-a_1-a_2}\right)
\begin{bmatrix}\mu\\2\end{bmatrix}_{t^v}.
\]
This proves \eqref{eq:expanded2} and \eqref{eq:expanded3}, including $\mu=2$.

\subsection{The moments at \texorpdfstring{$t=1$}{t=1}}

For a Laurent polynomial $B(t)=\sum_\alpha c_\alpha t^\alpha$, write
\[
C_j(B)=\sum_\alpha c_\alpha\alpha^j,
\]
where $C_0(B)=\sum_\alpha c_\alpha$. With $t=e^\epsilon$, one has
$B(e^\epsilon)=\sum_{j\ge0}C_j(B)\epsilon^j/j!$.
Thus if $C_j(B)=0$ for $j<r$, then
\begin{equation}\label{eq:moment-limit}
\lim_{t\to1}\frac{B(t)}{\prod_{i=1}^r(1-t^{\beta_i})}
=\frac{C_r(B)}{r!\prod_{i=1}^r(-\beta_i)}.
\end{equation}

All moments required here follow from
\begin{gather*}
\sum_{i=0}^{N}1=N+1,\qquad
\sum_{i=0}^{N}i=\frac{N(N+1)}2,\\
\sum_{i=0}^{N}i^2=\frac{N(N+1)(2N+1)}6,\qquad
\sum_{i=0}^{N}i^3=\frac{N^2(N+1)^2}4.
\end{gather*}
For the Gaussian binomial, use
\[
\begin{bmatrix}\mu\\2\end{bmatrix}_q
=\sum_{0\le i<j\le\mu-1}q^{i+j-1}.
\]
Writing $\ell=i+j-1$, its first three moments are
\begin{align*}
\sum_{i<j}1&=\frac{\mu(\mu-1)}2,\\
\sum_{i<j}\ell&=\frac{\mu(\mu-1)(\mu-2)}2,\\
\sum_{i<j}\ell^2&=\frac{\mu(\mu-1)(\mu-2)(7\mu-11)}{12}.
\end{align*}
No cubic Gaussian moment is needed: its two occurrences in $B^{(3)}_\mu$ have opposite signs, so the cubic term cancels in their difference.

For completeness, the moment substitution can be written uniformly. For $j\ge0$ put
\[
T_j(c,N,h)=\sum_{i=0}^{N}(c+ih)^j
=\sum_{l=0}^j\binom jl c^{j-l}h^l\sum_{i=0}^{N}i^l,
\]
with $T_j(c,-1,h)=0$. Define
\begin{align*}
F_j={}&T_j(v-k,\mu-2,v)+T_j(-2v,\mu-3,-v)
-\sum_l(-a_l)^j-T_j(0,\mu-2,v)\\
&-T_j(k-\mu v,\mu-3,k)
-\sum_l T_j(k-a_l,\mu-3,k)
+\sum_l T_j(\lambda-a_l,\mu-2,v).
\end{align*}
Here the sum over $l$ runs through the $m-1$ weights $a_l$, and zeroth powers contribute $1$. For $m=2$, $C_j(B_\mu)=F_j$. For $m=3$, setting $d_0=\lambda-a_1-a_2$ and $h=k-v$, one has
\[
C_j(B^{(3)}_\mu)=F_j+
\sum_{0\le i<l\le\mu-1}
\left((d_0+v(i+l-1))^j-(d_0+h+v(i+l-1))^j\right).
\]
Substitution of the power sums above gives
\begin{gather*}
C_0(B_\mu)=C_1(B_\mu)=0,\\
C_2(B_\mu)=\frac{\mu(\mu-1)(k-v)}3
\big((\mu+1)k+(\mu-5)v-3a\big),
\end{gather*}
and
\begin{gather*}
C_0(B^{(3)}_\mu)=C_1(B^{(3)}_\mu)=C_2(B^{(3)}_\mu)=0,\\
C_3(B^{(3)}_\mu)=-\frac{\mu(\mu-1)(k-v)}4\,\mathcal P_3,
\end{gather*}
where
\begin{align*}
\mathcal P_3={}&(\mu+1)(\mu+2)k^2+(3\mu^2-19\mu+2)kv
+2\mu(\mu+1)v^2\\
&-4(\mu+1)(a_1+a_2)k-4(\mu-5)(a_1+a_2)v+12a_1a_2.
\end{align*}
Applying \eqref{eq:moment-limit} with $(\beta_1,\beta_2)=(a,\mu v)$ or $(\beta_1,\beta_2,\beta_3)=(a_1,a_2,\mu v)$ gives \eqref{eq:R2} and \eqref{eq:R3}.

\subsection*{Corank two}
For $m=3$ the only other corank is two, $f=(x,p,q,r)$ with $p,q,r\in\C[x,y,z]$. Part of the argument survives. Corank-two points have codimension $6$ in the source, so for an $\Ae$-finite germ they occur only at the origin, and over $X\setminus\{0\}$ the multigerm is again of type $A_0^j$ or $S_0$. Thus the local models of Section~\ref{sec:local} are unchanged. Since $\cO_X$ and $\cO_S$ are Gorenstein, the conductor $\Hom_{\cO_X}(\cO_S,\cO_X)$ is an invertible $\cO_S$-module. Hence $\fc=\lambda\cO_S$ for a reduced equation $\lambda$ of the double-point locus \cite{MP89}, with $\deg\lambda=\deg G+\sum_iw_i-\sum_jd_j$ determined by the weights and degrees. We expect Lemma~\ref{lem:conductor} and Theorem~\ref{thm:rigidity} to extend with essentially the same proofs. On the other side of the conjecture, the formula $R(w,d)$ of \cite{Ohm16,KR} does not depend on the corank. The obstacles are the following.
(i) In corank one, $D^2(f)$, $\Sigma$ and $D^3(f)$ are complete intersections given by explicit divided differences in one variable. This is what gives the normality of $D^2(f)$ (Lemma~\ref{lem:D2normal}), the reducedness of $\Lambda$ and $\Lambda_3$ via resultants, and the presentation of $\tD=D^2(f)/S_2$ as a complete intersection in $\C^4$ (Lemma~\ref{lem:normalisation}). In corank two the multiple-point spaces are defined through Fitting ideals \cite{MP89,NBPS} and are not cut out by divided differences. We know of no general result that $D^2(f)$ is normal or that $D^2(f)/S_2$ is a complete intersection, so the normalisation of $D_X$ and the triple-point conductor lose their explicit description.
(ii) The second generator $\zeta$ of Proposition~\ref{prop:freeness} is the cross product of the gradients of the three equations of $\tD$ and its branch curve in $\C^4$. Without such a presentation there is no candidate field, and the derivations of $\tD$ tangent to the branch curve need not form a free module. The descent of Proposition~\ref{prop:descent} then has no explicit target.
(iii) For arbitrary corank-two germs, \cite[Thm~7.2]{FdBNBPS} with $r=2$ would require quasihomogeneous $\cA$-finite corank-two germs of unbounded corank-two multiplicity. The proof of Proposition~\ref{prop:unbounded} rests on the Marar--Mond criterion (MM$'$), which is specific to corank one, and we do not know such a family.

So

\end{document}